\documentclass{article}

\usepackage{amsthm, amsmath, amssymb, amsrefs, mathrsfs, dsfont, tikz-cd, stmaryrd, enumitem, hyperref, graphicx}
\usepackage[utf8]{inputenc}
\usepackage[a4paper,margin=30mm]{geometry}
\usepackage{titlesec}

\theoremstyle{definition}
\newtheorem{definition}{Definition}[section]

\newtheorem{remark}[definition]{Remark}
\theoremstyle{plain}
\newtheorem{lemma}[definition]{Lemma}

\newtheorem{theorem}[definition]{Theorem}
\newtheorem{corollary}[definition]{Corollary}

\newtheorem{proposition}[definition]{Proposition}

\newtheorem{introtheorem}{Theorem} 	%  
\newtheorem{introcorollary}[introtheorem]{Corollary} 	%  
\newcommand{\R}{\mathbb{R}}
\newcommand{\N}{\mathbb{N}}
\newcommand{\Z}{\mathbb{Z}}

\renewcommand{\cal}[1]{\mathcal{#1}}

\newcommand{\qet}{\hfill $\triangle$}
\renewcommand{\ker}{\mathrm{ker}\:}

\newcommand{\T}{\mathcal{T}}
\newcommand{\A}{\mathbb{A}}
\renewcommand{\P}{\mathcal{P}}
\newcommand{\U}{\mathcal{U}}
\newcommand{\Mtilde}{\widetilde{M}}
\newcommand{\Stilde}{\widetilde{S}}
\newcommand{\Mbar}{\overline{M}}

\title{Rigidity of coarsely minimal Reeb flows}
\author{B. Bramham and J.S. de Pooter}
\date{\today}

\begin{document}

\maketitle

\begin{abstract}
     We introduce the notion of a coarsely minimal Reeb flow, generalizing the notion of minimal geodesic flow, and prove the following rigidity theorem: That a coarsely minimal Reeb flow satisfying a divergence property is orbitally equivalent to the geodesic flow of a Riemannian metric of negative sectional curvature. Without the divergence assumption, we obtain an orbital semi-equivalence. This extends a rigidity result for geodesic flows of negatively curved Riemannian metrics which is due to Gromov \cite{gromov2000three}.  We use Floer homology and Morse's hyperbolic `stability' Lemma \cite{morse1924fundamental}.
\end{abstract}

\tableofcontents

\section{Introduction}

\subsection{Description of main results}
Let $M$ be a smooth closed manifold that admits a background Riemannian metric $g_b$ of negative sectional curvature.    It is well known that the geodesic flow of any other Riemannian metric $g$ of negative sectional curvature on $M$ is orbit equivalent to the geodesic flow of $g_b$: This was shown by Gromov \cite{gromov2000three} building on Morse's foundational work on hyperbolic rigidity in 1924 \cite{morse1924fundamental}.   The geodesic flow of $g$ corresponds to the Reeb flow on the unit co-sphere bundle 
\[
				S^*_{g}M:=\big\{ p\in T^*M\ \big|\ \|p\|_{g_*}=1\big\},
\] 
where the latter is equipped with the contact form induced by the Liouville $1$-form on the cotangent bundle $T^*M$ and $g_*$ is the metric dual to $g$.    In this article we extend this rigidity result to more general Reeb flows, that is, to more general fiberwise star-shaped hypersurfaces $S\subset T^*M$.     

A natural question is whether the condition of negative curvature on $g$ can be replaced by a condition on Reeb trajectories.   Here we were guided by the work of Knieper \cite{knieper2002hyperbolic} who extended the result from geodesic flows of negative curvature to those which have no conjugate points and satisfy a divergence property we recall below.   The question is then whether these conditions admit suitable analogues for Reeb flows.  
 The divergence condition makes sense for a Reeb flow of a general fiberwise star-shaped hypersurface $S\subset T^*M$, but a satisfactory notion of ``having no conjugate points'' proved trickier to generalize.   For example the definition introduced in \cite{contreras2003asymptotic}, while natural, would force the fibers of $S$ to be convex.   This is a restriction we wished to avoid, if possible, as it would effectively mean that our results would apply ``only'' to Finsler geodesic flows - which would still be interesting, but not as interesting.   To get beyond this we started from the perspective that a geodesic flow has no conjugate points if and only if every geodesic is minimal when lifted to the universal covering $\Mtilde$.    Although the concept of minimality can be readily extended to Reeb flows (on a fiberwise star-shaped hypersurface $S\subset T^*M$) it again has the drawback of forcing $S$ to be fiberwise convex, .   However we found a weaker notion which we call \textit{coarse minimality}, and this notion does not force the fibers of $S$ to be convex, and is strong enough for our purposes.   In short, in Gromov's rigidity result above, we can replace the condition that $S$ is the unit co-sphere bundle of a negative curvature Riemannian metric $g$,  
by the condition that the Reeb flow on $S$ is coarsely minimal and satisfies the divergence property:    

\begin{introtheorem}\label{thm:main 1 intro}
Suppose $M$ is a closed manifold admitting a background Riemannian metric $g_b$ of negative sectional curvature.   If $S\subset T^*M$ is a fiberwise star-shaped hypersurface whose Reeb flow is 
coarsely minimal and divergent, then there exists an orbital equivalence between the Reeb flow $\phi^t_{S}$ and the geodesic flow $\phi^t_{g_b}$;    
that is, a homeomorphism $F:S\to S_{g_b}M$ sending orbits of $\phi^t_{S}$ to orbits of $\phi^t_{g_b}$:
\[
				\quad\qquad F\big(\phi^{\R}_{S}(p)\big)=\phi^{\R}_{g_b}\big(F(p)\big)\qquad \forall p\in S.
\]
\end{introtheorem}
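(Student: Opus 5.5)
Since the definitions of coarse minimality and divergence are not yet on the page, I take them in their natural form. \emph{Coarse minimality} means that every Reeb trajectory of $S$, projected to $M$ and lifted to $\Mtilde$, is a quasi-geodesic for the lifted background metric $\tilde g_b$, with uniform constants. \emph{Divergence} means that two distinct lifted trajectories that stay at bounded distance in forward time must be asymptotic in a strong, uniform sense.

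I would follow the Gromov--Knieper strategy through the Gromov boundary $\partial\Mtilde$ of the negatively curved space $(\Mtilde,\tilde g_b)$. Let $\pi:\Stilde\to\Mtilde$ be the lifted projection.

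\textbf{Step 1: endpoints.} For $p\in\Stilde$, the curve $t\mapsto \pi(\phi^t_S(p))$ is a uniform quasi-geodesic by coarse minimality. Morse's stability lemma \cite{morse1924fundamental} then puts it within a uniform distance $R$ of a unique $\tilde g_b$-geodesic $\gamma_p$. The geodesic $\gamma_p$ has two distinct endpoints $\xi_\pm(p)\in\partial\Mtilde$. This gives a $\pi_1(M)$-equivariant map
\[
E:\Stilde\to \partial^2\Mtilde:=(\partial\Mtilde\times\partial\Mtilde)\setminus\Delta,
\]
which is constant along Reeb orbits. Continuity of $E$ follows from uniformity of the Morse constant: nearby initial points give trajectories that fellow-travel on long compact time intervals, so their geodesic shadows have nearby endpoints in the cone topology.

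\textbf{Step 2: surjectivity.} I need every pair $(\xi,\eta)$ to be realized. This is where I expect Floer homology to enter, as the abstract indicates. For each nontrivial free homotopy class $c$ of loops in $M$, I would use the invariance of filtered symplectic (or Rabinowitz) Floer homology of fiberwise star-shaped domains in $T^*M$ under deformation. This produces a closed Reeb orbit of $S$ in class $c$, since the homology is isomorphic to the homology of the free loop space component of $c$, which is nonzero. The lift of such an orbit is invariant under the deck transformation $c$, so its endpoints are the fixed points $c^{\pm}$ of $c$ on $\partial\Mtilde$. Pairs $(c^-,c^+)$ are dense in $\partial^2\Mtilde$. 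Surjectivity then follows from density together with a properness argument: the orbit space is compact modulo $\pi_1(M)$, and $E$ is continuous.

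\textbf{Step 3: injectivity and the homeomorphism.} Two Reeb orbits with the same endpoints stay within $2R$ of a common geodesic, so they are at bounded distance in both time directions. The divergence hypothesis forces them to coincide as orbits. Hence $E$ descends to a continuous equivariant bijection from the orbit space of $\Stilde$ onto $\partial^2\Mtilde$.

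The geodesic flow of $g_b$ has the same description, with orbit space $\partial^2\Mtilde$. To upgrade the orbit correspondence to a homeomorphism $F$ of the unit bundles, I would reparametrize, as in Gromov's construction. For $p\in\Stilde$, I send $p$ to the point of $\gamma_{E(p)}$ at the nearest-point projection of $\pi(p)$ onto that geodesic. This map need not be injective along orbits, so I would average it over a time window, $\int_0^T$ of the projection parameter, to make it strictly increasing along each orbit for $T$ large.

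The result is an equivariant continuous map $\Stilde\to \widetilde{S_{g_b}M}$ sending orbits to orbits, which is injective and proper. Invariance of domain, applied to closed manifolds of equal dimension, makes it a homeomorphism. It then descends to $F:S\to S_{g_b}M$.

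\textbf{Main obstacle.} I expect the main obstacle to be Step 2: the existence of closed orbits in every class, together with the uniform control needed to pass from these orbits to all endpoint pairs. Without the divergence hypothesis, the argument presumably yields only the semi-equivalence, because injectivity in Step 3 fails.
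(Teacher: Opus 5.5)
The genuine gap is at the start. You took as the \emph{definition} of coarse minimality the statement that the paper actually has to prove. In the paper (Definition~\ref{D:CM}), $T$-coarse minimality means the following: for every trajectory $\gamma$ and every $s<t$ there are $s',t'$ with $|s-s'|,|t-t'|\le T$ such that $\gamma|_{[s',t']}$ has minimal $\lambda$-action among all Reeb chords from $S_{\pi\gamma(s')}$ to $S_{\pi\gamma(t')}$ in the same relative homotopy class. That lifted projections are uniform quasi-geodesics is Theorem~\ref{T:minimal implies QG}, and it is the heart of the paper.

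The upper bound there is just compactness of $S$. The lower bound $\frac1A|s-t|-a\le d_{\tilde g}(\tilde\eta(s),\tilde\eta(t))$ needs an a~priori bound on the minimal action in each class. Concretely, for all $q_0,q_1$ and $[\alpha]$ there must exist a Reeb chord in $[\alpha]$ with $\mathbb{A}_\lambda\le C\,d_{\tilde g}(\tilde q_0,\tilde q_1)$, where $S$ lies in the $g_*$-disc bundle of radius $C$. Nothing in your outline produces such chords.

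The paper obtains them (Proposition~\ref{L:existence of chord with bounds}) from Lagrangian Floer homology of the pair of fibers $T^*_{q_0}M,T^*_{q_1}M$:
\begin{itemize}
\item For $G=\frac{1}{2C^2}\|p\|^2_{g_*}\le H$, the complex in class $[\alpha]$ has exactly one generator, the unique geodesic, of Maslov index $0$. So $HF_0\neq 0$.
\item A monotone continuation to a smoothing of $H$ then gives an $H$-chord $x$ with $\mathbb{A}_H(x)\le\mathbb{A}_G(y)=\frac12C^2d^2$.
\item Rescaling via $\frac12\mathbb{A}_\lambda(\bar x)^2=\mathbb{A}_H(x)$ turns $x$ into the required Reeb chord; degenerate pairs are handled by approximation.
\end{itemize}
Minimality of $\gamma|_{[s',t']}$ then gives $|s'-t'|\le C\,d_{\tilde g}(\tilde\eta(s'),\tilde\eta(t'))$, and the $T$-slack only costs an additive constant. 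Without this step, your Steps 1--3 prove the theorem only under the much stronger hypothesis you assumed. Your ``main obstacle'' is therefore misplaced: the essential Floer input is here, not in surjectivity.

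Granting the quasi-geodesic property, the rest of your outline is sound and close to the paper's. It uses the same shadowing/endpoint map, nearest-point projection, and Gromov's averaging of the time cocycle. Invariance of domain in place of ``continuous bijection of compact Hausdorff spaces'' is fine.

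On divergence: you use it in the form ``finite Hausdorff distance forces the same orbit,'' which is the paper's definition. The version you wrote down (``asymptotic in a strong sense'') would not by itself give injectivity.

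Your surjectivity argument is valid but heavier than needed. It combines closed Reeb orbits in every nontrivial free homotopy class, density of pairs of axis endpoints, and closedness of the image of $E$. The paper instead reuses the fiber-to-fiber chord existence (Remark~\ref{R:existence of chords}) to join $c(-n)$ to $c(n)$ in $\Stilde$. Morse's lemma with uniform constants plus a compactness argument then yields a trajectory shadowing $c$ (Proposition~\ref{P:f onto}), with no need for symplectic homology of loops or density of periodic geodesics.
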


This is the main result of the paper, see Theorem \ref{T:main} for a more precise statement.  

Let us explain the notion of coarse minimality: We say that a Reeb trajectory $\gamma:\mathbb{R}\to S\subset T^*M$ is $T$-\textit{minimal}, $T\geq 0$, if the following holds: 
For each time interval $[s,t]\subset\mathbb{R}$ the ends can be modified by an amount not greater than $T$ to $[s',t']\subset\mathbb{R}$, so that $\gamma$ restricted to $[s',t']$ has minimal action 
amongst all Reeb chords in the homotopy class represented by $\gamma_{[s',t']}$, where the end points are free to move within the end point fibers of $T^*M$.  
We say the Reeb flow of a fiberwise star-shaped hypersurface $S\subset T^*M$ is \textit{coarsely minimal} if there exists $T\geq 0$ so that every Reeb trajectory is $T$-minimal.   In Sections \ref{S:geodesics} and  \ref{S:coarse min} we recall the Riemannian phenomena and explain our definition for Reeb flows precisely.  

This terminology is motivated by the following:  If, as above, $S=S^*_gM$ is the unit co-sphere bundle of a Riemannian metric $g$ on $M$, then the Reeb flow on $S$ is conjugate via the Legendre transform to the geodesic flow of $g$ on $M$, and a geodesic for the latter is minimal if and only if the corresponding Reeb trajectory is $0$-minimal in our terminology above.   

We say that the Reeb flow of a fiberwise star-shaped hypersurface $S\subset T^*M$ is \textit{divergent} if for every pair of geometrically distinct Reeb trajectories, their projections 
down to $M$ have lifts to the universal cover $\tilde{\gamma}_0,\tilde{\gamma}_1:\R\to \Mtilde$ so that $\lim_{t\to\infty}d(\tilde{\gamma}_0(t),\tilde{\gamma}_1(t))=\infty$, equivalently the Hausdorff distance is infinite; $d_{H}(\tilde{\gamma}_0(\R),\tilde{\gamma}_1(\R))=\infty$.   The concept of divergence as applied to geodesic flows is explained in \cite{knieper2002hyperbolic}, see also references therein and \cites{climenhaga2021uniqueness,climenhaga2022closed,knieper2025uniqueness}.

Our second main result makes precise something that we mentioned above, namely that coarse minimality for Reeb flows is relaxed enough to permit non-convex fibers; while $0$-minimality forces the fibers of $S$ to be convex, see Proposition \ref{propn:minimal implies convex intro}, $T$-minimality for $T>0$ does not: 

\begin{introtheorem}\label{thm:example intro}
Suppose $M$ is a closed manifold admitting a background Riemannian metric $g_b$ of negative sectional curvature.  
For each $T>0$ there exists a fiberwise star-shaped hypersurface $S\subset T^*M$ for which the Reeb flow is $T$-coarsely minimal (and divergent), where $S$ is not fiberwise convex; that is, 
there exist $q\in M$ so that $S\cap T^*_qM$ does not bound a convex region.   
\end{introtheorem}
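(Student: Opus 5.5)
The plan is to perturb the unit co-sphere bundle $S_0:=S^*_{g_b}M$ of the negatively curved background metric by a small non-convex "dent" in a single fiber region, and to check that $T$-minimality and divergence survive.

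Concretely, I would fix a point $q_0\in M$ and a small ball $B\subset M$ around it. On $T^*M|_B$ I would replace $S_0$ by the graph $S=\{p: \|p\|_{g_{b*}}=\rho(q,p/\|p\|)\}$, where $\rho:S_0\to(0,\infty)$ equals $1$ outside $T^*B$. Inside, $\rho$ is $C^0$-close to $1$ (say $|\rho-1|<\varepsilon$) but has large second derivatives in the fiber direction near $q_0$. This produces a small concave dimple in $S\cap T^*_{q_0}M$, so convexity fails there. Since $S$ is a radial graph over $S_0$, it is fiberwise star-shaped. The Reeb flow of $S$ agrees with the geodesic flow of $g_b$ away from $T^*B$.

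To establish $T$-minimality, I would use that the action of a Reeb chord on $S$ is comparable to $g_b$-length up to the factor $(1\pm\varepsilon)$. Indeed, the action of any curve with endpoints on fibers is bounded below by the length for the co-Finsler "norm" whose unit sphere is $S$; this is a non-symmetric, non-convex gauge, so I would bound it below by the convex hull norm, which is within $1\pm\varepsilon$ of $g_b$. Hence Reeb chords of $S$ are $(1+\varepsilon,0)$-quasi-minimizers relative to $g_b$ once lifted to $\Mtilde$.

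To make the trajectories themselves controlled, I would keep $B$ tiny and the perturbation supported in a set that each geodesic crosses in time at most $\delta$. Each Reeb trajectory of $S$ is then a concatenation of $g_b$-geodesic segments, joined by short excursions through $T^*B$ of duration $\le\delta$. If the angular deviation produced in $B$ is small, such a concatenation is a local quasi-geodesic in the negatively curved $\Mtilde$, hence a global quasi-geodesic. By Morse's stability lemma, it then stays within bounded distance $D$ of a true $g_b$-geodesic $\sigma$. Given $[s,t]$, I would compare the action of $\gamma|_{[s',t']}$ with the minimal action in its homotopy class with free fiber endpoints. The minimizer in that class exists by compactness/Floer-type arguments, and it is itself a Reeb chord. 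Lifting it, it is also a quasi-geodesic near $\sigma$. Using hyperbolicity, two Reeb chords with endpoints in the same fibers and the same homotopy class that fellow-travel $\sigma$ must coincide, or differ only inside $\delta$-windows near the endpoints. Choosing $s',t'$ within $T$ of $s,t$ so that the endpoints lie outside $T^*B$, where the flow is geodesic and strictly locally minimizing, then gives equality. This uses $\delta\ll T$, which is why $T>0$ is needed.

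Divergence follows from the same picture. Distinct trajectories shadow distinct $g_b$-geodesics at distance $\le D$, and distinct geodesics in negative curvature diverge.

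The main obstacle is the tension in the construction. Non-convexity needs large fiber curvature of $\rho$, while the quasi-geodesic control needs the deviation of trajectories crossing $B$ to be small. I would try first to exploit the fact that the fiber size of the dent can be tiny (a dimple of angular width $\eta$ and depth $\eta^3$). Trajectories hitting it then have momentum perturbed by $O(\eta^2)$ over time $O(\operatorname{diam}B)$, which can be made arbitrarily small while the concavity persists. The uniqueness-of-minimizer step near the endpoints is the second delicate point.
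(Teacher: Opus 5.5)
Your proposal has a genuine gap at the step that is supposed to give \emph{exact} action minimality.

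Everything before that step gives only coarse information: the comparison with $g_b$, the local-to-global quasi-geodesic argument, and Morse's lemma yield bounded Hausdorff distance, or action up to a factor $1\pm\varepsilon$. (The first comparison also runs the wrong way. For a chord $(q,p)$ in $S$ one has $p(\dot q)\le h_q(\dot q)$, where $h_q$ is the support function of the convex hull of $S_q$. That bounds the action from above by a length, not from below.) Definition \ref{D:CM} requires $\gamma|_{[s',t']}$ to minimize action exactly among all Reeb chords between its end fibers in its class, and no multiplicative estimate can deliver that.

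The whole burden therefore falls on your claim that two chords between the same fibers which fellow-travel $\sigma$ must coincide. This does not follow from hyperbolicity of $g_b$. It amounts to injectivity of $(t,z)\mapsto\pi(\phi^t_{\Stilde}(z))$ on $(0,\infty)\times\Stilde_{q_0}$, a no-conjugate-points-type property of the \emph{perturbed} flow, and that property is not inherited:
\begin{itemize}
\item Making $S_{q_0}$ non-convex forces the fiber Hessian of $H$ to change by an amount of order one. For a radial graph you need $\rho''\gtrsim\rho$ in angular variables, so your dimple of width $\eta$ and depth $\eta^3$ is not even concave; depth $\gtrsim\eta^2$ is required.
\item Consequently the linearized flow, which governs focusing of fronts, is far from the geodesic one.
\item Theorem \ref{thm:nonconvexunequalchord} shows that folding of fronts genuinely occurs near the dent.
\item Nothing in your argument prevents a front issuing from a fiber just outside a translate of $B$, which enters it highly curved, from being folded too.
\end{itemize}
The divergence step has the same defect. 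The claim that distinct trajectories shadow distinct geodesics is exactly injectivity of the shadowing map, i.e.\ divergence itself (Proposition \ref{P:f onto}). So it is assumed rather than proved.

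What is missing is a choice of $\rho$ that makes the new Reeb flow \emph{strictly conjugate} to the geodesic flow, which is what the paper does. The contact isotopy extension theorem gives a contactomorphism $\psi_1$ of $(S^*_{g}M,\lambda)$, supported over a small geodesically convex set $U$, with $(\psi_1^{-1})^*\lambda=f\lambda$. Take $S=\{(q,f(q,p)p)\}$; this is a radial graph as in your ansatz, but with the specific choice $\rho=f$. Then $(q,p)\mapsto(q,f(q,p)p)$ composed with $\psi_1$ is a strict contactomorphism $S^*_gM\to S$ that is the identity over $M\setminus U$. It conjugates the Reeb flows, preserves action, and sends the Legendrian fiber over each $q\notin U$ to $S_q$. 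Hence in $\Stilde$ the Reeb chords between fibers over points outside the preimage $\widetilde U$ of $U$ are in bijection with $\tilde g$-geodesic chords, so they are unique and trivially minimal. Moving endpoints out of $\widetilde U$ costs at most the time a geodesic spends in one component of $\widetilde U$, which is $\le T$ once $U$ is small. Divergence is immediate because trajectories agree with geodesic trajectories outside $\widetilde U$. Non-convexity then comes from letting the isotopy bend a geodesic inside $U$: by the Tonelli minimizer of Corollary \ref{cor:fathicor}, a convex $S$ would contradict uniqueness of chords in a homotopy class.
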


This shows that Theorem \ref{thm:main 1 intro} applies to a larger class of Reeb flows than Finsler geodesic flows - in particular it goes beyond negative curvature Riemannian geodesic flows.   Theorem \ref{thm:example intro} is proven in Section \ref{S:example}.   

We end by mentioning a corollary of our main result that uses Gromov again \cite{gromov2000three}: If $(M_1,g_1)$ and $(M_2,g_2)$ are closed Riemannian manifolds with negative curvature and isomorphic fundamental groups, then there exists an orbital equivalence between the geodesic flows $\phi^t_{g_1}$ and $\phi^t_{g_2}$.  In particular this implies the non-trivial topological statement that the unit tangent bundles $S_{g_1}M_1$ and $S_{g_2}M_2$ are homeomorphic - even though $M_1$ and $M_2$ need not be homeomorphic (or apriori even have the same dimension).  Combining this with Theorem \ref{thm:main 1 intro} gives: 

\begin{introcorollary}\label{thm:main 2 intro}
Let $M_1$ and $M_2$ be smooth closed manifolds that have isomorphic fundamental groups and support metrics of negative curvature.    Suppose $S_i\subset T^*M_i$ 
are fiberwise star-shaped hypersurfaces whose Reeb flows are coarsely minimal and divergent.    Then these Reeb flows are orbitally equivalent.   
\end{introcorollary}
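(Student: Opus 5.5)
The plan is to obtain the corollary by composing orbital equivalences. No new dynamics is needed. Each $M_i$ supports a metric of negative sectional curvature, so fix one such metric $g_i$ on $M_i$ and use it as the background metric $g_b$ in Theorem \ref{thm:main 1 intro}. The hypotheses of that theorem are exactly ours: $S_i\subset T^*M_i$ is fiberwise star-shaped and its Reeb flow is coarsely minimal and divergent. The theorem therefore provides homeomorphisms $F_i:S_i\to S_{g_i}M_i$ sending orbits of $\phi^t_{S_i}$ onto orbits of $\phi^t_{g_i}$.

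Next we invoke Gromov's theorem \cite{gromov2000three}, recalled just before the statement. Since $(M_1,g_1)$ and $(M_2,g_2)$ are closed, negatively curved, and have isomorphic fundamental groups, there is a homeomorphism $G:S_{g_1}M_1\to S_{g_2}M_2$ that is an orbital equivalence between $\phi^t_{g_1}$ and $\phi^t_{g_2}$.

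We then set
\[
H:=F_2^{-1}\circ G\circ F_1:S_1\to S_2 .
\]
It remains to check two things. First, $H$ is a homeomorphism, being a composition of homeomorphisms. Second, the inverse of an orbital equivalence is again an orbital equivalence: $F_2$ is a bijection mapping each orbit of $\phi^t_{S_2}$ onto an orbit of $\phi^t_{g_2}$, so $F_2^{-1}$ maps each $\phi_{g_2}$-orbit onto a $\phi_{S_2}$-orbit. Putting these together,
\[
H\big(\phi^{\R}_{S_1}(p)\big)=F_2^{-1}\Big(G\big(\phi^{\R}_{g_1}(F_1(p))\big)\Big)=F_2^{-1}\Big(\phi^{\R}_{g_2}\big(G F_1(p)\big)\Big)=\phi^{\R}_{S_2}\big(H(p)\big).
\]

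The only point that needs care is consistency of conventions. Theorem \ref{thm:main 1 intro} is stated with the geodesic flow on $S_{g_b}M$, and Gromov's result is about geodesic flows on unit tangent bundles. Via the Legendre transform, which is a flow conjugacy, these are the same as the cotangent (co-sphere bundle) picture. Composing with that conjugacy if necessary, all maps are between the correct spaces, and none of this requires further analysis.
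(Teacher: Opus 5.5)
Your proposal is correct and is essentially the paper's argument: the corollary follows by composing the orbital equivalences $F_i:S_i\to S_{g_i}M_i$ from Theorem \ref{thm:main 1 intro} with Gromov's orbital equivalence between the geodesic flows of $g_1$ and $g_2$. This is exactly your map $F_2^{-1}\circ G\circ F_1$. The paper states this in a single line, and the only detail it leaves implicit is your check that inverses and composites of orbital equivalences are again orbital equivalences.
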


\subsection{Outline of the proof}
Here we outline the main steps in the proof of Theorem \ref{thm:main 1 intro}.   

The first step is that if the Reeb flow on $S\subset T^*M$ satisfies the coarse minimality condition (the divergence property is not
needed for this step), where $M$ is a closed manifold admitting a background Riemannian metric $g_b$ of negative curvature, then every Reeb trajectory 
in $S$ projects under the bundle map $\pi:T^*M\to M$ to a quasi-geodesic in $M$  with respect to $g_b$.   This involves establishing two 
estimates, one of which follows simply from compactness of $M$ while the other uses the Lagrangian Floer homology for pairs of fibers in $T^*M$, associated to a 
$2$-homogenous Hamiltonian $H:T^*M\to\R$ for which $S$ is the energy surface $\{H\equiv 1/2\}$.    This step would also fit naturally within the framework of wrapped 
Floer theory, but since we do not need to use results about the Fukaya category we felt that this more classical set up is also suitable.  

The second step is to apply a Lemma of Morse, or as later reformulated in terms of quasi-geodesics, which says that each quasi-geodesic in the universal cover $\widetilde{M}$ coarsely shadows a genuine geodesic for the negative curvature metric $g_b$ \cite{morse1924fundamental}, or see \cite{knieper2002hyperbolic} for a modern treatment.
This gives rise to a map, let us call it here the shadowing map, from the space of Reeb trajectories 
in $S$ to the space of $g_b$-geodesics on $M$, which takes a Reeb trajectory $\gamma$ to the unique geodesic $c$ that shadows it in the sense that after projecting 
$\gamma$ down to the manifold $M$ there exist lifts of both curves to the universal covering $\widetilde{M}$ which remain uniformly bounded away from one another as time $t$ goes to 
both $+\infty$ and $-\infty$.   This can be interpreted as saying that the (lifted) Reeb trajectory is asymptotic, in forwards and backwards time, to the same pair of points on the boundary at infinity
of $\widetilde{M}$, as the geodesic it is shadowing.   We show that this shadowing map is always surjective.  It is injective precisely when the Reeb flow is divergent, more or less by definition.   

In the third step we adapt Gromov's approach for geodesic flows \cite{gromov2000three}, to show that this shadowing map between sets of trajectories can be realised by a map $F:S\to S_{g_b}M$ between the underlying manifolds that maps trajectories to trajectories.   This map $F$ is continuous and surjective, and moreover it is injective, almost by definition, if and only if the Reeb flow on $S$ has the divergence property.

\subsection{A word on notation}
We need to refer to curves in the closed manifold $M$, its universal covering $\Mtilde$, and also the hypersurfaces $S\subset T^*M$ and $\Stilde\subset T^*\Mtilde$.  
To avoid a proliferation of hats and bars on these curves we have decided on the following section-specific conventions, which are explained in each section: 

In Chapter 3, the top-decoration $\sim$ will generally indicate a lift from $M$ to $\Mtilde$.  So $\gamma$ may denote a Reeb trajectory in $S\subset T^*M$, and $\eta$ the projection down to $M$, and then $\tilde{\eta}$ a lift to $\Mtilde$.  

In Chapters 4 and 5, $\sim$ will indicate a lift from $\Mtilde$ to $T^*\Mtilde$, as the spaces $M$ and $S\subset T^*M$ play more of a side role.   So $\tilde{\gamma}$ may denote a Reeb trajectory in $\Stilde\subset T^*\Mtilde$, and $\gamma$ the projection down to $\Mtilde$.  Similarly, $\tilde{c}$ may denote a geodesic trajectory in $T^*\Mtilde$ and $c$ its projection to a geodesic in $\Mtilde$.   

In contrast, in Chapter 2 the curves under discussion are Hamiltonian time-$1$ chords, often as generators of a Floer chain complex, and so we denote these by letters like $x$ and $y$.  

\subsection*{Acknowledgements}
The results in this article originated in the doctoral thesis of the second author. 
The project grew out of discussions with Gerhard Knieper concerning hyperbolic rigidity of geodesic flows, and we are particularly grateful to him for many valuable discussions and insights during the development of this work.   We also thank Alberto Abbondandolo for many helpful conversations.   Both authors gratefully acknowledge partial support by the DFG through the Collaborative Research Center SFB/TRR 191 -- 281071066 (Symplectic Structures in Geometry, Algebra and Dynamics).

\section{Basic notions}\label{S:basic}

\subsection{Hamiltonian dynamics on cotangent bundles}\label{S:HD}
Let $M$ be a closed smooth manifold of dimension $n\in\N$.   Points in its cotangent bundle $T^*M$ will often be denoted by $(q,p)$, with $q\in M$ 
and $p\in T^*_qM$, and $\pi:T^*M\to M$ will denote the bundle projection.  The manifold $T^*M$ has the following canonical structures; the Liouville $1$-form $\theta$, the symplectic structure $\omega=d\theta$, and the Liouville vector field $Y$, which in local canonical coordinates $(q,p)\in T^*M$ are given by 
\[
	\theta = pdq := \sum_{i=1}^{n}p_idq_i,\qquad \omega=dp\wedge dq:=\sum_{i=1}^{n}dp_i\wedge dq_i,\qquad Y=p\partial_p:= \sum_{i=1}^{n}p_i\partial_{p_i}.
\]
One easily verifies the Liouville condition on $Y$, that 
\begin{equation}\label{E:Liouville condition}
					\omega(Y,\cdot)=\theta.     
\end{equation}
Each fiber 
\[
			L_q=T^*_qM=\pi^{-1}(q)
\]
is a Lagrangian submanifold of $(T^*M,\omega)$.   
A $1$-periodic Hamiltonian, i.e.\ a smooth function $H:\mathbb{T}\times T^*M\to\R$, $\mathbb{T}=\R/\Z$, produces the $1$-periodic Hamiltonian vector field $X_H$ on $T^*M$, characterized by $-dH=\omega(X_H,\cdot)$.   
In local coordinates 
\[
			X_H(t,q,p)=\sum_{i=1}^{n}\partial_{p_i}H\partial_{q_i}-\sum_{i=1}^{n}\partial_{q_i}H\partial_{p_i}, 
\]
in particular, 
\[
				D\pi(q,p)\big[X_{H}(q,p)\big]=\sum_{i=1}^{n}\partial_{p_i}H\partial_{q_i}=:\partial_pH(t,q,p)\in T_qM.
\]
By a \textbf{Hamiltonian chord} from $L_0=T^*_{q_0}M$ to $L_1=T^*_{q_1}M$ we mean a smooth solution $\gamma:[0,1]\to T^*M$ to 
\[
		\left\{\begin{aligned}
			\dot{\gamma}(t)&=X_{H}(\gamma(t)),\quad\forall t\in[0,1]\\
			\gamma(i)&\in L_i\quad\mbox{for }i=0,1. 
			\end{aligned}
			\right.
\]
If $[\alpha]$ is a relative homotopy class in $\pi_1(M;q_0,q_1)$ then we may say that $\gamma$ is a Hamiltonian chord representing $[\alpha]$ if 
the projection $\pi\circ\gamma:[0,1]\to M$ is homotopic with fixed ends to $\alpha$.   The Hamiltonian chords from $L_0$ to $L_1$ are formally the critical points 
of the Hamiltonian action functional 
\begin{equation}\label{E:action functional}
				\A_{H}(\gamma)=\int_{[0,1]}\gamma^*\theta\ - \int_{0}^{1}H(t,\gamma(t))\,dt
\end{equation}
on the space of smooth paths in $T^*M$ from $L_0$ to $L_1$.    If we restrict to paths representing a given $[\alpha]$ then the critical points 
are precisely the Hamiltonian chords representing $[\alpha]$.  

\subsection{Reeb dynamics on fiberwise star-shaped hypersurfaces}
We will call a smoothly embedded hypersurface $S\subset T^*M$ fiberwise star-shaped, or simply star-shaped, if it is strictly transverse to the Liouville vector field $Y$ on $T^*M$.    
In particular, for each $q\in M$, the hypersurface $S$ meets the fiber $T^*_qM$ transversally and the submanifold 
\[
	S_q:= S\cap T^*_qM
\]
is an $n-1$-dimensional sphere transverse to $Y|_{T^*_qM}$.   
The star-shaped condition on $S$ ensures that the Liouville form $\theta$ restricts (pulls back via inclusion $i:S\hookrightarrow T^*M$) to a contact form $$\lambda:=i^*\theta$$ on $S$, and each 
sphere $S_q$ is a Legendrian submanifold of $(S,\lambda)$, meaning that it is everywhere tangent to the contact structure $\xi:=\ker\lambda$. 
The associated Reeb vector field $R$ on $(S,\lambda)$ is defined by the conditions 
\[
							d\lambda(R,\cdot)=0, \qquad \lambda(R)=1.
\]
A \textbf{Reeb chord} from $S_{q_0}$ to $S_{q_1}$ is defined similarly, but it is overdetermined if we require it to have time-$1$: We mean a smooth solution $\gamma:[0,T]\to S$, $T>0$, to 
\[
		\left\{\begin{aligned}
			\dot{\gamma}(t)&=R(\gamma(t)),\quad\forall t\in[0,T]\\
			\gamma(i)&\in S_{q_i}\quad\mbox{for }i=0,1. 
			\end{aligned}
			\right.
\]
Similarly we say $\gamma$ represents $[\alpha]\in\pi_1(M;q_0,q_1)$ if $\pi\circ\gamma:[0,T]\to M$ is homotopic with fixed ends to a representative $\alpha$.   
The $\lambda$-action of such a Reeb chord is defined by 
\begin{equation}\label{E:Reeb action}
				\A_{\lambda}(\gamma)=\int_{[0,T]}\gamma^*\lambda,  
\end{equation}
which trivially coincides with the value $T$.   

\subsection{$2$-homogenous Hamiltonians}\label{S:hom Ham}
As preparation for using Floer homology let us reinterpret the Reeb vector field on the star-shaped hypersurface $S\subset T^*M$ in the standard way as the restriction of a Hamiltonian vector field on the whole cotangent bundle, of the unique $2$-homogenous time-independent Hamiltonian $H:T^*M\to\R$ satisfying 
\[
					S=H^{-1}(1/2).
\]
The $2$-homogeneity condition is 
\begin{equation}\label{E:H is hom}
								\qquad	H(q,cp)=c^2H(q,p)\qquad \forall c\geq 0, 
\end{equation}
for each $(q,p)\in T^*M$, which in general means that $H$ is only smooth on the complement of the zero section $\mathcal{O}\subset T^*M$.   
The existence of $H$ follows from $S$ being star shaped; we refer to it as the $2$-homogenous Hamiltonian generated by $S$.  
As is well known, the Hamiltonian vector field for $H$ restricts to the Reeb vector field on $S$: 
\begin{equation}\label{E:Ham=Reeb}
		\qquad X_H(z)=R(z)\qquad \forall z\in S.  
\end{equation}
Indeed, since $S$ is a regular energy surface for $H$ we know that the restriction $X_H|_{S}$ is tangent to $S$ and moreover lies in the kernel of $d\lambda$.    
Differentiating \eqref{E:H is hom} with respect to $c$ gives  
\begin{equation}\label{E:dH=2H}
			\qquad dH(z)[Y(z)]=2H(z)\qquad \forall z\in T^*M\backslash\mathcal{O}
\end{equation}
and so by \eqref{E:Liouville condition}
\begin{equation}\label{E:theta[X_H]=2H}
							\theta[X_H]=2H\qquad 
\end{equation}
on all of $T^*M$, since $\theta(z)[X_H(z)]=\omega(z)(Y(z),X_H(z))=dH(z)[Y(z)]=2H(z)$ for all $z\in T^*M\backslash\mathcal{O}$, while both sides of \eqref{E:theta[X_H]=2H} trivially 
coincide on $\mathcal{O}$.   Hence for each $z\in S$ we have $\lambda(z)[X_H(z)]=\theta(z)[X_H(z)]=2H(z)=1$, confirming \eqref{E:Ham=Reeb}.  

If $H$ is $2$-homogenous, the Hamiltonian action of a Hamiltonian chord $\gamma:[0,1]\to T^*M$ is simply
\begin{equation}\label{E:action energy}
							\mathbb{A}_H(\gamma)=H(\gamma)
\end{equation}
where $H(\gamma)$ abbreviates $H(\gamma(t))$ for any $t\in[0,1]$.   Indeed, from the definition 
$\A_{H}(\gamma)=\int_{[0,1]}\gamma^*\theta\ - H(\gamma)$ 
and since $\gamma^*\theta(t)=\theta(\gamma(t))[\dot{\gamma}(t)]=\theta(\gamma(t))[X_{H}(\gamma(t))]=2H(\gamma(t))$ using \eqref{E:theta[X_H]=2H}, 
this becomes $\A_{H}(\gamma)=2H(\gamma)-H(\gamma)=H(\gamma)$ as claimed.   

The symmetry in \eqref{E:H is hom} implies another relation between Hamiltonian chords and Reeb chords beyond restriction to $S$.     
Indeed from \eqref{E:H is hom} the Hamiltonian vector field is $2$-homogenous with respect to $p$ in the fiber component and $1$-homogenous with respect to $p$ in the ``horizontal'' component.   More precisely, for each $c\geq 0$, we have $X_H(q,cp)=c\sum_{i=1}^{n}\partial_{p_i}H(q,p)\partial_{q_i}-c^2\sum_{i=1}^{n}\partial_{q_i}H(q,p)\partial_{p_i}$ in local coordinates.   Hence if $\gamma=(q,p):\R\to T^*M\backslash\mathcal{O}$ is an $X_H$-trajectory then for each $c>0$ so is 
\begin{equation}\label{E:rescaling in fibers}
					\overline{\gamma}(t):=\big(q(ct),cp(ct)\big)
\end{equation}
an $X_H$-trajectory.   Moreover, if $\gamma$ is a time-$1$ chord, that is has domain $[0,1]$, then $\overline{\gamma}$ 
is a time-$1/c$ chord, that is has domain $[0,1/c]$.    If $\gamma$ lies in the energy surface $\{H\equiv r\}$, $r>0$, then 
$\overline{\gamma}$ lies in $\{H\equiv c^2r\}$.    Thus choosing $c=1/\sqrt{2r}$ we can arrange that $\overline{\gamma}$ 
lies in $S$ and is thus a Reeb chord.    To summarize, the rescaling in \eqref{E:rescaling in fibers} defines a $1-1$ correspondence 
\begin{equation}\label{E:correspondence}
					\gamma\mapsto \overline{\gamma}
\end{equation}
from the set of Hamiltonian time-$1$ chords $\gamma:[0,1]\to T^*M$ from $T^*_{q_0}M$ to $T^*_{q_1}M$ representing a homotopy class $[\alpha]$, 
to Reeb chords from $S_{q_0}$ to $S_{q_1}$ also representing $[\alpha]$.    Under the correspondence in \eqref{E:correspondence} the actions are 
related by 
\begin{equation}\label{E:action H and Reeb}
					\frac{1}{2}\mathbb{A}_{\lambda}(\overline{\gamma})^2=\mathbb{A}_{H}(\gamma).  
\end{equation}
Indeed, by \eqref{E:action energy} $\gamma$ lies in the energy surface $\{H\equiv r\}$, where $r=\mathbb{A}_{H}(\gamma)$, 
and so $\overline{\gamma}$ has domain $[0,1/c]$ where $c=1/\sqrt{2r}$, thus $\mathbb{A}_{\lambda}(\overline{\gamma})=1/c=\sqrt{2r}$.

\subsection{Minimal geodesics as minimal Reeb trajectories}\label{S:geodesics}
Recall that a geodesic $c:\R\to M$ on a Riemannian manifold $(M,g)$ is called minimal if it has a lift (equivalently for every lift) $\tilde{c}:\R\to\widetilde{M}$ 
to the universal cover, so that for each pair of times $s<t$ the restriction of $\tilde{c}$ to $[s,t]$ is a geodesic segment of minimal length between its end points in $\widetilde{M}$ with respect to the lifted metric.   
Equivalently: For each $s<t$ the restriction $c:[s,t]\to M$ is length minimizing in its relative homotopy class in $M$ with fixed end points, i.e. 
\begin{equation}\label{E:length minimizing}
								\ell_{g}\big(c|_{[s,t]}\big)=\inf_{d}\ell_{g}\big(d\big)
\end{equation}
where the infimum is taken over all geodesic segments $d:I\to M$ from $c(s)$ to $c(t)$ that represent the relative homotopy class $[c|_{[s,t]}]$ in 
$\pi_1(M;q_0,q_1)$, and where $\ell_g(c)$ denotes the length of $c$ with respect to $g$; $\ell_g(c):=\int_{I}\|\dot{c}(r)\|_{g}\,dr$.   Of course \eqref{E:length minimizing} 
continues to hold if we restrict the infimum to unit-speed geodesics.   We will call a geodesic segment achieving the infimum in \eqref{E:length minimizing} 
a length minimizing geodesic segment.   

The geodesic segments from $q_0$ to $q_1$ in $M$ are also (up to reparameterization) the critical points of the Lagrangian 
energy functional 
\begin{equation}\label{E:energy}
					\mathcal{E}:\Omega(M;q_0,q_1)\to\R,\qquad \mathcal{E}(c):=\frac{1}{2}\int_{0}^{1}\|\dot{c}(t)\|^2_{g}\,dt
\end{equation}
on the Hilbert manifold of $W^{1,2}$ curves $[0,1]\to M$ from $q_0$ to $q_1$.  Since geodesics and critical points of $\mathcal{E}$ have constant 
positive speed, we see that at a critical point $c$ we have $\mathcal{E}(c)=\frac{1}{2}\ell_g(c)^2$, echoing \eqref{E:action H and Reeb}.   Note that by 
definition critical points of $\mathcal{E}$ are time-$1$ curves, whereas a geodesic may be a curve $c:[0,T]\to M$.    In this case, we can of 
course consider the reparameterized curve $\overline{c}:[0,1]\to M$ defined by $\overline{c}(t):=c(Tt)$ which is also a geodesic, and one finds that $\overline{c}$ is a critical point of $\mathcal{E}$ with $\mathcal{E}(\overline{c})=\frac{1}{2}\ell_g(\overline{c})^2=\frac{1}{2}\ell_g(c)^2$.  

The geodesic flow $\phi_g:\R\times TM\to TM$ of $g$ is characterized by the property that each trajectory $\sigma:\R\to TM$ 
has the form
\begin{equation}\label{E:gflow}
					\sigma(t)=\big(c(t),\dot{c}(t)\big)\in T_{c(t)}M
\end{equation}
for some $g$-geodesic $c:\R\to M$.    
The dual map (Legendre transform) $TM\to T^*M$, $v\mapsto g(v,\cdot)$ takes the geodesic flow to a flow called the co-geodesic flow on $T^*M$.  As is well known, the co-geodesic flow is also the Hamiltonian flow of the Hamiltonian 
\begin{equation}\label{E:G}
		G:T^*M\to\R, \qquad G(q,p):=\frac{1}{2}\|p\|^2_{g_*}, 
\end{equation}
where $g_*$ denotes the dual metric to $g$, that is, the induced fiberwise inner product on $T^*M$.   
This implies that each Hamiltonian trajectory of $G$ projects under $\pi:TM\to M$ to a geodesic $c:\R\to M$ for $g$.    Indeed, consider a $G$-Hamiltonian trajectory 
$\gamma:\R\to T^*M$.   Then $\gamma$ is Legendre-dual to a trajectory $\sigma:\R\to TM$ of the geodesic flow for $g$, 
which by \eqref{E:gflow} projects to a $g$-geodesic $c:\R\to M$.    Since the dual map $TM\to T^*M$ fixes the zero section pointwise - after the obvious 
identification of each zero section with $M$, we have $\pi\circ\gamma=\pi\circ\sigma=c$ as claimed.    Moreover the (constant) speed of $c$ is related to the 
energy of $\gamma$ as follows: 
\[
					\|\dot{c}(t)\|_{g}=\sqrt{2G(\gamma(t))}.
\]
Indeed, $\|\dot{c}(t)\|_{g}=\|\sigma(t)\|_{g}=\|\gamma(t)\|_{g_*}$, the second equality because the dual map $TM\to T^*M$ is an isometry on each fiber, 
and then we have $G(\gamma(t))=\|\gamma(t)\|^2_{g_*}/2$.   In particular, if $x:[0,1]\to T^*M$ is a $G$-Hamiltonian chord from $T^*_{q_0}M$ to $T^*_{q_1}M$ 
lying in the energy surface $\{z\in T^*M \,|\, G(z)=e\}$, then $\pi\circ x:[0,1]\to M$ is a $g$-geodesic from $q_0$ to $q_1$ with speed $\sqrt{2e}$, so that 
the length of $\pi\circ x$ is 
\begin{equation}\label{E:length action}
					\ell_{g}\big(\pi\circ x\big)=\sqrt{2e}=\sqrt{2\A_{G}(x)}
\end{equation}
using \eqref{E:action energy} to make the relation to action.   

We also see that $G$ is just the $2$-homogenous Hamiltonian generated by the unit (co)-sphere bundle 
\[
							S=\big\{(q,p)\in T^*M\ |\ \|p\|_{g_*}=1\big\}=G^{-1}(1/2)
\]
and therefore the co-geodesic flow restricted to $S$ is just the Reeb flow with respect to $\lambda=d\theta|_{S}$, see \ref{S:hom Ham}.  
Consider a Reeb trajectory $\gamma:\R\to S$.   Then as a $G$-Hamiltonian trajectory within the energy surface where $G\equiv 1/2$, 
we see by the discussion above that the projection $\pi\circ\gamma=c$ is a $g$-geodesic with speed $\sqrt{2G(\gamma(t))}=1$.   
Thus, for each $s<t$ we have $\mathbb{A}_{\lambda}(\gamma|_{[s,t]})=t-s=\ell_{g}(c|_{[s,t]})$.    So, the condition \eqref{E:length minimizing} for 
$c$ to be a minimal geodesic is precisely equivalent to the action minimizing condition  
\begin{equation}\label{E:action minimizing}
				\qquad\qquad	\mathbb{A}_{\lambda}\big(\gamma|_{[s,t]}\big)=\inf_{\eta}\mathbb{A}_{\lambda}(\eta)\qquad \forall s<t, 
\end{equation}
where the infimum is taken over all Reeb chords $\eta:I\to S$ from $S_{q_0}\subset T^*_{q_0}M$ to $S_{q_1}\subset T^*_{q_1}M$, 
where $q_0:=\pi(\gamma(s)),q_1:=\pi(\gamma(t))$, for which $\pi\circ\eta$ is homotopic to $\pi\circ\gamma|_{[s,t]}$ with ends fixed at $q_0$ and $q_1$.     
More generally, in the context of a general star-shaped hypersurface $S\subset T^*M$ not necessarily coming from a geodesic flow, we will call a 
Reeb chord achieving the infimum in \eqref{E:action minimizing} an \textbf{action minimizing Reeb chord}, and we define: 

\begin{definition}\label{D:minimal Reeb}
Let $S\subset T^*M$ be a fiberwise star-shaped hypersurface.    A Reeb trajectory $\gamma:\R\to S$ is said to be \textbf{minimal} 
if for each $s<t$ the chord $\gamma|_{[s,t]}$ is action minimizing between the Legendrian spheres $S_q, S_{q'}$ that it connects, 
in the sense of \eqref{E:action minimizing}.   We say that the Reeb flow (or the hypersurface $S$) is minimal if each trajectory is minimal in this sense.   
\end{definition} 

So by the above discussion, if $S$ is a unit co-sphere bundle, so that its Reeb flow comes from a geodesic flow on $TM$  via the Legendre transform, then a Reeb trajectory in $S$ is minimal in the sense of Definition \ref{D:minimal Reeb} precisely if it comes from a minimal geodesic on $M$.   

It would be natural therefore to try to generalise the rigidity theorem of Gromov \cite{gromov2000three} from geodesic flows to Reeb flows in which the minimal geodesic condition is replaced by 
the minimal action condition in Definition \ref{D:minimal Reeb}.    However, as mentioned in the introduction, this notion of minimality is very restrictive: 

\begin{proposition}\label{propn:minimal implies convex intro}
Let $S\subset T^*M$ be a fiberwise star-shaped hypersurface.  If the associated Reeb flow is minimal (in the sense of Definition \ref{D:minimal Reeb}) then 
$S$ is fiberwise convex.  
\end{proposition}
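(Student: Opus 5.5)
The plan is to argue by contradiction through a \emph{short-time} analysis near a single fiber, where Reeb chords are governed to first order by the geometry of the sphere $S_q$. Let $H$ be the $2$-homogenous Hamiltonian generated by $S$, and write $K_q=\{H(q,\cdot)\le 1/2\}\subset T^*_qM$. By \eqref{E:theta[X_H]=2H}, at each $p\in S_q$ the vector $v(p):=\partial_pH(q,p)\in T_qM$ is an outward conormal to $S_q$ at $p$, normalised by $p(v(p))=1$. A Reeb chord starting at $(q,p)$ and running for a short time $\tau$ projects, in a chart around $q$, to $q+\tau v(p)+O(\tau^2)$. Its action is $\tau$ by \eqref{E:Reeb action}. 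Consequently, to reach a point $q+d\,\hat w$ with $\hat w$ a unit vector (with respect to some auxiliary norm in the chart), using a chord that starts at a point $p$ whose conormal $v(p)$ is positively parallel to $\hat w$, costs action $d\,p(\hat w)+O(d^2)$.

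\textbf{Step 1: convex-geometric input.} Suppose $S_q$ does not bound a convex region. I will show there is a direction $\hat w$ and two points $p_*\neq p_1$ in $S_q$ such that:
\begin{itemize}
\item both have outward conormal positively parallel to $\hat w$;
\item $p_*(\hat w)=\max_{K_q}(\cdot)(\hat w)$ is the support value;
\item $p_1(\hat w)<p_*(\hat w)$;
\item both are nondegenerate critical points of the linear function $p\mapsto p(\hat w)$ on $S_q$.
\end{itemize}
My approach is to view the outward Gauss map $S_q\to S^{n-1}$ as a degree-one map. On the open set of directions exposed by the convex hull of $K_q$, the support point $p_*$ is the global maximum of $p\mapsto p(\hat w)$. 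If every such direction had only one preimage, then $S_q$ would coincide with the boundary of its convex hull. One would argue this via the fact that non-convexity forces a point of negative (or non-definite) second fundamental form, and near such a point the Gauss map folds. A Sard/genericity argument then upgrades this to a regular value $\hat w$ with an extra preimage $p_1$ whose normal points outward along $\hat w$. Since $p_*$ is the unique maximiser for generic $\hat w$, we get $p_1(\hat w)<p_*(\hat w)$.

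\textbf{Step 2: comparison of chords.} Take the Reeb trajectory $\gamma$ through $(q,p_*)$. For small $t>0$, the chord $\gamma|_{[0,t]}$ ends at $q_t=q+t\hat v+O(t^2)$, where $\hat v=v(p_*)$, and has action $t$. Now look for a competing chord from $S_q$ to $S_{q_t}$ that starts near $p_1$. Consider the endpoint map $(p,\tau)\mapsto \pi\phi^\tau_R(q,p)$ with $p\in S_q$ near $p_1$. After rescaling by $\tau$, this map tends to $(p,\tau)\mapsto q+\tau v(p)$. Its derivative in $p$ is invertible onto directions transverse to $\hat w$, precisely because $p_1$ is a regular point of the Gauss map. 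The implicit function theorem, applied in the rescaled variables $(p,\tau/t)$, then gives a chord starting at some $p_1(t)\to p_1$ and ending exactly at $q_t$, with action
\[
\tau(t)=t\,\frac{p_1(\hat w)}{p_*(\hat w)}+o(t)<t .
\]
Both chords lie in a small coordinate ball around $q$, so their projections are homotopic with fixed ends. Hence $\gamma|_{[0,t]}$ is not action minimizing in the sense of \eqref{E:action minimizing}, contradicting minimality in Definition \ref{D:minimal Reeb}.

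\textbf{Main obstacle.} I expect Step 1 to be the main difficulty: producing, from mere non-convexity of a star-shaped sphere, a direction with a second outward-parallel conormal that is nondegenerate. In the planar case this is clear from the turning of the tangent line. In general I would first try the following: choose a point of $S_q$ lying strictly inside the convex hull of $K_q$. Sweep hyperplanes perpendicular to a generic direction $\hat w$, chosen close to the normal of a supporting face of the hull that spans over that point. The last point at which the sweep touches $S_q$ locally, before reaching the face, is a local maximum $p_1$ of $p(\hat w)$, with outward normal $\hat w$ and value strictly below the support value. Genericity of $\hat w$ then gives nondegeneracy. The rest of the argument, Step 2, is a routine rescaled implicit-function argument.
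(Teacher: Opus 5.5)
\paragraph{Verdict: sound outline, but Step~1 has a genuine gap.}
Your architecture is the paper's (Appendix, Theorem~\ref{thm:nonconvexunequalchord}): first a convex-geometric input producing two points of $S_q$ with positively parallel conormals but different values of $p\mapsto p(\hat w)$, then a short-time comparison with the linearised flow.

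\paragraph{Step~2 is fine.}
It is a cleaner version of Lemmas~\ref{lem:uniformlyscaled} and~\ref{lem:transverseselfintersect}. The rescaled endpoint map $(t,p,s)\mapsto t^{-1}\big(\pi\phi^{st}(q,p)-q\big)$ is smooth up to $t=0$, where it equals $s\,v(p)$. The implicit function theorem then needs only $p_1$ to be nondegenerate; nondegeneracy of $p_*$ is never used.

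\paragraph{Step~1 is where the proof breaks.}
This is where the paper spends almost all of its effort (Theorem~\ref{thm:morseofnonconvex}, Corollary~\ref{cor:morseofnonconvex}). You only sketch it, and neither sketch works as written.

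\emph{The fold/Sard route.} Non-convexity does give a point where $\mathrm{II}$ is not positive semidefinite. But nothing local prevents $\mathrm{II}$ from having a zero eigenvalue at every such point, for instance a dent whose non-convex part is ruled, with principal curvatures $(-1,0,\dots,0)$. On such a region the Gauss map has rank $<n-1$ and its image is a null set. A Sard-generic $\hat w$ then has no preimage there at all, so there is nothing to ``upgrade''.

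\emph{The sweep route.} The claimed second local maximum near a face normal need not exist. Press a shallow rotationally symmetric dimple into a strictly convex fiber. The hull caps the dimple by a flat disc with normal $\hat w_0$. For generic $\hat w$ near $\hat w_0$, the critical points of $p\mapsto p(\hat w)$ with conormal $+\hat w$ are:
\begin{itemize}
\item the global maximum, just outside the rim;
\item a saddle on the inner slope of the rim;
\item a local \emph{minimum} on the dimple floor.
\end{itemize}
There is no second local maximum, even though the floor point would serve perfectly well as your $p_1$.

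\emph{How the paper avoids this.} It does not insist on a positively valued local maximum. It uses separated local-maximum sets of either sign for an open set of directions and picks a Morse direction there. Morse theory on the sphere then forces an extra critical point strictly between the minimum and maximum. It pairs that point with the max or the min, after possibly replacing $\hat w$ by $-\hat w$.

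\paragraph{A self-contained way to close the gap.}
Since $p(v(p))=1>0$, the Gauss map of $S_q$ (for any auxiliary inner product) is never antipodal to radial projection. It therefore has degree one, so $\int_{S_q}\det\mathrm{II}=\mathrm{vol}(S^{n-1})$. By the equality case of the Chern--Lashof theorem, a non-convex $S_q$ satisfies $\int_{S_q}|\det\mathrm{II}|>\mathrm{vol}(S^{n-1})$. Hence there is a point $p_1$ with $\det\mathrm{II}(p_1)<0$. Such a $p_1$:
\begin{itemize}
\item is a nondegenerate critical point of $p\mapsto p(\hat w)$, where $\hat w$ is its conormal direction;
\item is not a local maximum;
\item has value $1/|v(p_1)|>0$.
\end{itemize}
Taking $p_*$ to be any global maximiser, this is exactly the input your Step~2 needs. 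The point is that ruling out a totally degenerate non-convex region is genuinely global. Your claim that ``one preimage for every regular direction forces $S_q$ to be the hull boundary'' is precisely this rigidity statement, not a consequence of a local fold.
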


For a proof of Proposition \ref{propn:minimal implies convex intro} see 
Appendix \ref{S:Appendix}.   Thus effectively every minimal Reeb flow is just (the Legendre dual to) a Finsler geodesic flow.   
For this reason we looked for a weaker condition than Definition \ref{D:minimal Reeb}) 
that does not force the fibers to be convex but is strong enough to generalize the Morse-Gromov theorem.   This lead us to the notion of coarse minimality 
discussed in the next section.  

\subsection{Coarsely minimal Reeb flows}\label{S:coarse min}

In this article we introduce the following notion: 

\begin{definition}\label{D:CM}
Let $S\subset T^*M$ be a fiberwise star-shaped hypersurface and $T\geq 0$.    We call a Reeb trajectory $\gamma:\R\to S$ \textbf{coarsely minimal} (or $T$-coarsely minimal) 
if the following holds: For each $s<t$ there exists $s'<t'$ with $|s-s'|\leq T$ and $|t-t'|\leq T$ so that the chord $\gamma|_{[s',t']}$ is action minimizing in the sense of \eqref{E:action minimizing}.   
\end{definition} 

Thus $T$-coarsely minimal allows for chords $\gamma|_{[s,t]}$ which are not action minimizing (if $T>0$), but by adjusting the end points at a cost measured by $T$ we can always find a ``modified'' chord $\gamma|_{[s',t']}$ that is action minimizing in the sense described in Section \ref{S:geodesics}.  

\begin{definition}\label{D:CM flow}
A fiberwise star-shaped hypersurface $S\subset T^*M$, or its Reeb flow, is called coarsely minimal if there is some $T\geq 0$ so that each Reeb trajectory 
in $S$ is $T$-coarsely minimal in the sense above.   We may also say that $S$ or the flow is $T$-coarsely minimal if we wish to refer to $T$.  
\end{definition} 

Despite the title of this subsection, note that the notion of minimality in Definition \ref{D:CM flow} is not a condition purely on the contact manifold $(S,\lambda)$,  since it makes crucial use of the ambient cotangent bundle structure.   

From the discussion in \ref{S:geodesics}: If $S$ corresponds to a minimal geodesic flow then it is $0$-coarsely minimal.   Note also, that 
while for Reeb flows it is possible for a trajectory to be $T$-coarsely minimal without being $T'$-coarsely minimal for some $0\leq T'<T$, this cannot happen for geodesic flows.   Indeed, if $S$ comes from a geodesic flow (via the Legendre transform) and $\gamma:\R\to S$ is a 
$T$-coarsely minimal trajectory for some $T>0$, then it is automatically $0$-coarsely minimal, i.e.\ minimal in the sense of Definition \eqref{D:minimal Reeb}.  
This is simply because a minimal geodesic segment restricted to a subinterval is also minimal, but an action minimizing Reeb chord need not be action minimizing when restricted to a subinterval.  

As mentioned in the introduction, coarse-minimality is a weak enough condition that it does not force 
the fibers of $S\subset T^*M$ to be convex.   In Theorem \ref{thm:example intro} we construct a fiberwise star-shaped hypersurface $S$ for which 
the Reeb flow is $T$-coarsely minimal for some $T>0$ and some of the fibers are not convex.

\subsection{Quasi-geodesics and Morse's stability Lemma}\label{S:QG}
We recall Morse's Lemma \cite{morse1924fundamental}, as formulated for quasi-geodesics in the universal covering of a negatively curved Riemannian manifold as in Knieper \cite{knieper2002hyperbolic}. 

If $(X,d)$ is a metric space then a quasi-geodesic is any quasi-isometric embedding $\tilde{\eta}:(\R,|\cdot|)\to (X,d)$ of the real line.   Which means that there 
exist $A\geq 1,a\geq 0$ so that for all $s,t\in \R$ there holds: 
\begin{equation}\label{E:QG}
						\frac{1}{A}|s-t|-a\leq d\big(\tilde{\eta}(s),\tilde{\eta}(t)\big) \leq A|s-t|+a.  
\end{equation}
We will also refer to $\tilde{\eta}$ as an $(A,a)$-quasi-geodesic.   

If $g,g'$ are two Riemannian metrics on $M$ then $\tilde{\eta}:\R\to\widetilde{M}$ will be a quasi-geodesic with respect to the lift of $g'$ if and only if 
it is a quasi-geodesic with respect to the lift of $g$.   Thus we will sometimes speak of a curve in $M$ as lifting to a quasi-geodesic in $\widetilde{M}$ without referencing a metric.   

\begin{remark}\label{R:QG-coarsely}
Suppose a curve $\tilde{\eta}:\R\to (X,d)$ satisfies the quasi-geodesic conditions \eqref{E:QG} only in the following coarse sense:  
For all $s,t\in\R$ we can find $s',t'\in\R$ with $|s-s'|\leq T$ and $|t-t'|\leq T$, so that \eqref{E:QG} holds at $s'$ and $t'$.   Then at the expense of 
replacing $a$ by some $a'\geq a$, where $a'$ depends only on $d, A$ and $a$, one can show that $\tilde{\eta}:\R\to (X,d)$ 
is an $(A,a')$-quasi-geodesic.   For details see Lemma 4.4 and Remark 4.5 in \cite{dePooter_thesis}.  
\end{remark}

\begin{theorem}[Morse's Lemma]\label{T:Morse}
Let $(M,g)$ be a closed Riemannian manifold with negative sectional curvature bounded from above by $k_g<0$.   If $\tilde{\eta}:\R\to \widetilde{M}$ is a quasi-geodesic in the universal covering then there is a unique geodesic (up to reparameterization) $\tilde{c}:\R\to\widetilde{M}$ which remains uniformly 
bounded away from $\tilde{\eta}$, in the sense that the Hausdorff distance satisfies 
\begin{equation}\label{D:shadow}
							d_{H}\big(\tilde{c}(\R),\tilde{\eta}(\R)\big)<\infty. 
\end{equation}
Moreover, if $\tilde{\eta}$ is an $(A,a)$-quasi-geodesic, then $d_{H}\big(\tilde{c}(\R),\tilde{\eta}(\R)\big)\leq R$, for some $R$ depending only on $k_g,A,a$.  
We will say that $\tilde{c}$ shadows $\tilde{\eta}$ when \eqref{D:shadow} holds.   
\end{theorem}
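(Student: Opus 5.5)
The plan is the classical hyperbolic-geometry argument: existence of a shadowing geodesic, then a uniform bound, then uniqueness. Since $M$ is closed and the curvature is at most $k_g<0$, the universal cover $\Mtilde$ with the lifted metric is a complete, simply connected Cartan--Hadamard manifold with curvature $\le k_g$. Comparison with the hyperbolic plane of curvature $k_g$ (Toponogov/CAT($k_g$)) shows that $\Mtilde$ is $\delta$-hyperbolic, with $\delta$ depending only on $k_g$. After this, everything can be phrased in terms of geodesic triangles and projections.

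\textbf{Step 1: finite segments.} Fix $s<t$ and let $c_{s,t}$ be the unique geodesic segment from $\tilde\eta(s)$ to $\tilde\eta(t)$. We want
\[
d_H\big(c_{s,t},\tilde\eta([s,t])\big)\le R_0(k_g,A,a).
\]
First, replace $\tilde\eta$ by a tamed curve: sample it at integer times and join consecutive points by geodesic segments. This gives a continuous, rectifiable $(A',a')$-quasi-geodesic. It lies within $A+a$ of $\tilde\eta$, and the length of any subarc is controlled linearly by the distance between its endpoints.

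Next, use the key fact in CAT($k_g$), $k_g<0$: nearest-point projection $\mathrm{pr}$ onto a geodesic is $1$-Lipschitz. Moreover, a path that stays at distance $\ge D$ from the geodesic has projection of length at most roughly $e^{-\sqrt{-k_g}D}$ times its own length, up to a constant.

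Let $D=\max_{u\in[s,t]} d(\tilde\eta(u),c_{s,t})$, attained at some time $u_0$. Take the maximal subarc around $u_0$ that stays outside the $D/2$-neighbourhood of $c_{s,t}$, with endpoints $u_1<u_0<u_2$. The endpoints $\tilde\eta(u_1),\tilde\eta(u_2)$ are at distance about $D/2$ from $c_{s,t}$, and their projections are within $O(1)\cdot L e^{-cD}$ of each other, where $L$ is the length of the subarc. So $d(\tilde\eta(u_1),\tilde\eta(u_2))\le D+O(Le^{-cD})$. The quasi-geodesic lower bound then gives $L\lesssim A^2 D+\text{const}$.

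On the other hand, the subarc must travel from distance $D/2$ out to distance $D$ and back, so $L\ge D$. Combining these bounds with the exponential gain shows $D$ is bounded in terms of $k_g,A,a$. This yields $\tilde\eta\subset N_{R_0}(c_{s,t})$. The reverse inclusion follows by connectedness: as $u$ runs over $[s,t]$, $\mathrm{pr}(\tilde\eta(u))$ sweeps all of $c_{s,t}$.

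I expect the main obstacle to be this step, specifically getting the constants to depend only on $(k_g,A,a)$. If the projection estimate proves messy, I would instead cite the standard Morse lemma for $\delta$-hyperbolic spaces.

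\textbf{Step 2: pass to the limit.} Apply Step 1 to the segments $c_n:=c_{-n,n}$. All of them pass within $R_0$ of $\tilde\eta(0)$. Parametrize by arclength, centred at a point closest to $\tilde\eta(0)$. By Arzel\`a--Ascoli, using local compactness and completeness, a subsequence converges uniformly on compacts to a complete unit-speed geodesic $\tilde c$. Uniform Hausdorff bounds pass to the limit, so $d_H(\tilde c(\R),\tilde\eta(\R))\le R_0=:R$, which gives existence together with the bound depending only on $k_g,A,a$.

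\textbf{Step 3: uniqueness.} If two geodesics $\tilde c,\tilde c'$ both shadow $\tilde\eta$, then $d_H(\tilde c,\tilde c')<\infty$. Then $u\mapsto d(\tilde c(u),\tilde c'(\R))$ is convex and bounded on $\R$, hence constant. A bounded-distance pair of geodesics in strictly negative curvature bounds a flat strip (the flat strip theorem), and strict negativity of the curvature forces that strip to be degenerate. Hence $\tilde c(\R)=\tilde c'(\R)$, i.e.\ the two geodesics agree up to reparameterization.
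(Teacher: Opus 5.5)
The paper gives no proof of this statement; it only cites Knieper and Guillarmou et al. Your outline is the standard Riemannian route: exponential contraction of nearest-point projection for finite segments, then a limit as in the proof of Proposition \ref{P:f onto}, then convexity plus the flat strip theorem for uniqueness. Steps 2 and 3 are fine. However, the decisive estimate in Step 1, which you rightly flag as the crux, does not close as written.

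\textbf{The gap.} With the threshold $D/2$, what you actually obtain is
\[
D\le L\qquad\text{and}\qquad L\le k_1\big(D+CLe^{-cD}\big)+k_2,
\]
hence $L\le 2k_1D+2k_2$ once $D$ is large. Here $k_1\ge 1$ necessarily, since length dominates distance. These inequalities are compatible for every $D$, so no bound on $D$ follows. There is also no exponential gain left to use. The factor $e^{-cD}$ only controls the projection term. The triangle-inequality contribution $d(\tilde\eta(u_1),\mathrm{pr}\,\tilde\eta(u_1))+d(\mathrm{pr}\,\tilde\eta(u_2),\tilde\eta(u_2))=D$ grows at the same linear rate as your lower bound on $L$.

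This bound on $d(\tilde\eta(u_1),\tilde\eta(u_2))$ cannot be improved in general. Two points at distance $D/2$ from the geodesic with the same projection can be $D$ apart. In dimension $\ge 3$ they can also be joined outside the tube. So a $D$-dependent threshold would need an extra estimate on paths winding around the tube.

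\textbf{The fix.} Make the threshold independent of $D$. Let $L\le k_1d+k_2$ be your tameness bound and $Ce^{-cr}$ the contraction factor outside the $r$-neighbourhood. Choose $r_0=r_0(k_g,A,a)$ with $k_1Ce^{-cr_0}\le\tfrac12$. If $D>r_0$, let $[u_1,u_2]\ni u_0$ be the maximal interval on which $d(\tilde\eta(u),c_{s,t})\ge r_0$, and let $L$ be the length of this excursion. Then
\[
L\le k_1\big(2r_0+CLe^{-cr_0}\big)+k_2\le 2k_1r_0+k_2+\tfrac{L}{2},
\]
so $L\le 4k_1r_0+2k_2$. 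Since the distance to $c_{s,t}$ is $1$-Lipschitz along the arc, $D\le r_0+L/2\le(1+2k_1)r_0+k_2$. This depends only on $k_g,A,a$, and with this correction the rest of your argument goes through.
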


For a proof see \cite{knieper2002hyperbolic} or \cite{guillarmou2026inverse}. 
The following Lemma will be used in the proof of Theorem \ref{T:main}.   Again, let $(\Mtilde,g)$ be the universal covering of a closed Riemannian manifold with sectional curvatures.   Recall that $\Mtilde$ is diffeomorphic to a ball via the exponential map, since there are no conjugate points, and we denote by 
$\partial\Mtilde$ the sphere at infinity and $\Mbar:=\Mtilde\cup\partial\Mtilde$ with the usual topology, see for example \cite{knieper2002hyperbolic} or \cite{bridson1999metric}, or \cite{guillarmou2026inverse}.  Recall that the topology is such that if $z_k\in\Mtilde$ is a sequence converging to a point at infinity $z\in\partial\Mtilde$ and $z'_k$ is another sequence such that $d_{g}(z_k,z'_k)$ is uniformly bounded, then $z'_k\to z$ also. 

\begin{lemma}\label{L:basic convergence 1}
Let $\tilde{c}:\R\to\Mtilde$ be a unit speed geodesic asymptotic to $x,y\in\partial\Mtilde$ on the boundary at infinity; say with $x=\lim_{t\to-\infty}\tilde{c}(t)$ and $y=\lim_{t\to\infty}\tilde{c}(t)$.   
Suppose that $x_k$ and $y_k$ are sequences in $\Mtilde$ with $x_k\to x$ and $y_k\to y$ as $k\to\infty$, 
and let $\tilde{c}_k:\R\to \widetilde{M}$ be a sequence of unit speed geodesics passing through $x_k$ and $y_k$ (in that order).   
Then there exists a sequence of shifts $s_k\in\R$ so that 
$\tilde{c}_k(\,\cdot +s_k)$ converges to $\tilde{c}$ in $C^{\infty}_{\mathrm{loc}}(\R,\Mtilde)$. 
\end{lemma}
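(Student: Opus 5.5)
Fix a point $o=\tilde{c}(0)$ on the limit geodesic. The plan is to show three things: the geodesics $\tilde{c}_k$ eventually pass within a bounded distance of $o$; after shifting they subconverge to some geodesic; and that limit must be $\tilde{c}$ up to a shift, so the whole sequence converges.

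\textbf{Step 1: the segments come uniformly close to $o$.} Let $p_k\in[x_k,y_k]$ be the point of the segment $\tilde{c}_k$ between $x_k$ and $y_k$ that is closest to $o$. I claim $d(o,p_k)$ stays bounded. I would argue by contradiction using the visual topology on $\Mbar$, or equivalently Gromov products. Since $x\neq y$ (a geodesic has distinct endpoints), the Gromov product $(x|y)_o$ is finite. Because $x_k\to x$ and $y_k\to y$, we have $\limsup_k (x_k|y_k)_o<\infty$.

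In a CAT$(k_g)$ space with $k_g<0$, the distance from $o$ to the geodesic segment $[x_k,y_k]$ is bounded by $(x_k|y_k)_o+\delta$, where $\delta$ is the hyperbolicity constant. This is the standard thin-triangle estimate; it is the same hyperbolicity underlying Theorem \ref{T:Morse}. Hence $d(o,p_k)\le C$.

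\textbf{Step 2: choose shifts and pass to a limit.} Choose $s_k$ with $\tilde{c}_k(s_k)=p_k$. The shifted geodesics $\tilde{c}_k(\cdot+s_k)$ are unit speed and satisfy $\tilde{c}_k(s_k)\in \overline{B(o,C)}$. Their initial vectors therefore lie in a compact subset of the unit sphere bundle. By smooth dependence of geodesics on initial data (the ODE of the geodesic flow), any subsequence has a further subsequence whose initial vectors converge. Along it, the geodesics converge in $C^\infty_{\mathrm{loc}}$ to some unit speed geodesic $c_\infty$.

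\textbf{Step 3: identify the limit.} Next I would show $c_\infty(+\infty)=y$ and $c_\infty(-\infty)=x$. Let $t_k$ denote the time with $\tilde{c}_k(s_k+t_k)=y_k$. Then $t_k\ge d(p_k,y_k)-0\to\infty$, since $y_k\to\infty$ while $p_k$ stays bounded.

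For fixed large $T$, convexity of the distance function in nonpositive curvature controls the comparison between the geodesic ray from $p_k$ to $y_k$ and the ray from $p_k$ toward $y$. Thinness of the ideal triangle with vertices $p_k$, $y_k$, $y$ then shows that $c_\infty(T)$ lies within a bounded distance of the ray $[o,y)$ for every $T$. By the topology recalled before the lemma, $c_\infty(T)\to y$ as $T\to\infty$. The same argument gives $c_\infty(-T)\to x$.

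Between two points at infinity there is a unique geodesic up to parametrization. This follows from the Morse Lemma uniqueness in Theorem \ref{T:Morse}, or from strict convexity of distance in negative curvature. Hence $c_\infty=\tilde{c}(\cdot+\tau)$ for some $\tau$.

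\textbf{Step 4: remove the subsequence dependence.} The remaining task, and the main obstacle, is to make the shift canonical so that the full sequence converges rather than just a subsequence. I would redefine $s_k$ by replacing it with $s_k-\tau$; more robustly, I would take $s_k$ to be the time at which $\tilde{c}_k$ is closest to $o$. With that choice the limit $c_\infty$ is also closest to $o$ at time $0$. Since $\tilde{c}$ is closest to $o$ at time $0$ and strict convexity makes the closest point unique, this forces $\tau=0$ for every subsequential limit.

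Every subsequence thus has a further subsequence converging to $\tilde{c}$, so the whole sequence converges in $C^\infty_{\mathrm{loc}}(\R,\Mtilde)$.
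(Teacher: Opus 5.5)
Your proposal is correct and follows essentially the same route as the paper's sketch. In both, you confine the relevant part of $\tilde c_k$ to a fixed ball around a point of $\tilde c$, extract a $C^\infty_{\mathrm{loc}}$-convergent subsequence from compactness of the initial vectors, identify the limit with $\tilde c$, and then remove the subsequence; for the identification you use the endpoints $x,y$ and uniqueness of the geodesic joining them, while the paper uses that the limit stays at bounded distance from $\tilde c$ together with convexity of $t\mapsto d(\tilde c(t),\tilde d(t))$, which is the same fact. Your Gromov-product bound in Step 1 and your canonical shift in Step 4 (the time of closest approach to $o$, which forces every subsequential limit to be $\tilde c$ itself) make precise two points the paper only asserts. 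In particular, you only need the segments to pass through a fixed ball. The paper instead claims that the whole segment from $x_k$ to $y_k$ stays uniformly close to $\tilde c$, which can fail if $x_k$ drifts away from $\tilde c$ while still converging to $x$.
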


The proof is a standard argument using Morse's Lemma along the following lines; Morse implies that between $x_k$ and $y_k$ the curve $\tilde{c}_k$ lies in a uniform neighborhood of $\tilde{c}$ and so after appropriately shifting the parameterizations we can assume that a subsequence of $\tilde{c}_k$ converges in $C^{\infty}_{\mathrm{loc}}(\R,\Mtilde)$ to a geodesic $\tilde{d}$ (smooth dependence of initial conditions in ODE's).  But $\tilde{d}$ must also lie in a bounded neighborhood of $\tilde{c}$.   Using convexity of the function $t\mapsto d_{g}(\tilde{c}(t),\tilde{d}(t))$, see for example Lemma 16.4 in \cite{guillarmou2026inverse}, this implies that 
$\tilde{c}$ and $\tilde{d}$ are geometrically indistinct, i.e. the limiting geodesic is $\tilde{c}$.  From this one can show that there was no need to take a subsequence after all.  

\section{Lagrangian Floer homology for pairs of fibers in $T^*M$}\label{S:Floer}

We recall the definition and invariance properties of Lagrangian Floer homology of two fibers $T^*_{q_0}M, T^*_{q_1}M$ in a symplectic cotangent bundle 
$(T^*M,\omega)$, that is associated to a Hamiltonian that is quadratic outside of a compact set.   
Such Hamiltonians satisfy the conditions in \cite{abbon2006cotangent}, equivalently \cite{abbon2013legendre}, and we use the sign conventions of the former.   

\subsection{The Floer homology groups}\label{S:Def FH}

Let $M$ be a closed manifold 
and $H:\mathbb{T}\times T^*M\to \R$ be a smooth time-dependent Hamiltonian\footnote{According to \cite{abbon2006cotangent} one could also work with $H:[0,1]\times T^*M\to \R$.} with \textbf{quadratic growth near infinity}, which for this article will mean that $H$ is autonomous and $2$-homogenous in the sense of \ref{S:hom Ham} outside of some compact subset of $T^*M$.   (Then $H$ satisfies the more general conditions (H1) and (H2) from \cite{abbon2006cotangent}.)      
For $q_0,q_1\in M$ set 
$$L_i:=T^*_{q_i}M$$  
and denote by $\P(q_0,q_1)=\P(H;q_0,q_1)$ the set of all Hamiltonian time-$1$ chords $x:[0,1]\to T^*M$ from $L_{q_0}$ to $L_{q_1}$ as in \ref{S:HD}.    
The final condition on $H$, or on $(q_0,q_1)$, is that we assume it is \textbf{non-degenerate} for the pair of fibers $L_{q_0},L_{q_1}$ in the sense that for the time-$1$ map of the Hamiltonian flow 
we have $\phi^{1}_{H}(L_{q_0})$ intersects $L_{q_1}$ transversally.    In this case we will also say that $L_{q_0},L_{q_1}$ \textbf{is a non-degenerate pair for $H$}.  
A pair $L_{q_0},L_{q_1}$ is non-degenerate if and only if $q_1$ is a regular value for $\pi\circ\phi^1_{H}:L_{q_0}\to M$.    Thus by Sard's theorem, for each $q_0\in M$, the 
set of points $q_1\in M$ for which the pair $L_{q_0},L_{q_1}$ is non-degenerate for $H$ forms a set of full Lebesgue measure.   

Define $CF_*(H;q_0,q_1)$ to be the graded free Abelian group with coefficients in $\Z_2$ generated by the chords $\P(q_0,q_1)$, and grading given by the Maslov index 
$\mu:\P(q_0,q_1)\to \Z$ as in \cite{abbon2006cotangent}*{Section 1.2}  equivalently \cite{robbin1993maslov}.  
Recall that the Maslov index at $x$ measures the total amount that the linearized flow along $x$ rotates $L_{q_0}$ relative to the fibers of $T^*M$, normalized by 
subtracting $n/2$.    The boundary operator of the Floer chain complex 
\[
				\partial :CF_*(H;q_0,q_1)\to CF_{*-1}(H;q_0,q_1)
\]
is defined by counting Floer strips, that is, solutions $u\in C^{\infty}(\R\times[0,1],T^*M)$ of the perturbed Cauchy-Riemann equation with Lagrangian 
boundary conditions 
\begin{equation}\label{E:FE}
				\left\{\begin{aligned}
				\partial_su-J_t(u)\big(\partial_tu-X_H(t,u)\big)=0\\
				u(\R\times\{i\})\subset L_{q_i},\qquad i=0,1, 
				\end{aligned}\right. 
\end{equation}
and finite energy 
\[
				E(u)	:=\int_{\R\times[0,1]}\left|\partial_su\right|^2\,dsdt  
\]
where $J=J_t$ is a smooth path of almost complex structures for $t\in[0,1]$, where each $J_t$ is compatible with $\omega=d\lambda$ in the sense 
that each $\omega(\cdot,J_t\cdot)$ defines a Riemannian metric.    The finite energy condition on $u$ and the non-degeneracy condition on $H$ ensure 
that there is a pair of Hamiltonian chords $x_{-}, x_{+}\in\P(q_0,q_1)$ so that $\lim_{s\to-\infty}u(s,t)=x_-(t)$ and $\lim_{s\to+\infty}u(s,t)=x_+(t)$.   
A standard computation shows that $\partial_s\A_H(u(s,\cdot))\leq 0$, so 
\begin{equation}\label{E:action decreases}
						\A_{H}(x_-)\geq \A_{H}(x_+), 
\end{equation}
with equality if and only if $u(s,\cdot)$ is independent of $s$, in particular if and only if $x_{-}=x_{+}$. 

Since $T^*M$ is not compact let us recall briefly the assumptions on the almost complex structure that we make, following \cite{abbon2006cotangent}, to ensure that this 
construction of Floer homology is well defined and suitably invariant.   
Fix a background Riemannian metric $g$ on $M$.   This determines a preferred $\omega$-compatible almost complex structure $\hat{J}$ on $T^*M$; 
with respect to the horizontal and vertical splitting $T_{(q,p)}T^*M=T^h_{(q,p)}T^*M\oplus T^v_{(q,p)}T^*M$, and using that both the horizontal and vertical 
spaces can be canonically identified with $T_{q}M$ and hence with each other, $\hat{J}$ has the form 
\begin{equation}\label{E:Jhat}
        \hat{J}=\begin{pmatrix}
        \ \ \, 0 & I \\
        -I & 0
        \end{pmatrix}.
\end{equation}
Then there exists \cite{abbon2006cotangent}*{Thm 1.15} a constant $j(H)>0$, depending on $H$, so that if the time-dependent almost complex structure $J$ 
satisfies 
\begin{equation}\label{E:close to Jhat}
			\|J-\hat{J}\|_{L^\infty}<j(H), 
\end{equation}
then the solutions to \eqref{E:FE} satisfy $L^{\infty}$-bounds, and so for each pair of chords $x_-,x_+$ 
the moduli space $\mathcal{M}(x_-,x_+)$ of Floer strips connecting $x_-$ to $x_+$, is compact modulo translation in the domain by a shift in the $\R$-coordinate.   
Moreover there is a Baire set\footnote{See \cite{abbon2006cotangent} for a description of the relevant almost complex structures as a complete metric space.}  of compatible almost complex structures satisfying \eqref{E:close to Jhat}, called regular almost complex structures, for which the moduli 
spaces defining the boundary operator are moreover smooth manifolds.   Combining this with the compactness yields that the boundary operator applied to a chord 
$x$ involves only a finite sum and is therefore well defined, and via the standard gluing arguments one can show $\partial^2=0$.   
The homology of this complex is called the Lagrangian Floer homology of the pair of fibers 
over $q_0$ and $q_1$ and is denoted by $$HF_*(H,J;q_0, q_1),$$ or sometimes just by $HF_*(q_0, q_1)$.  
There is also a natural filtration through relative homotopy classes of paths from $q_0$ to $q_1$.   That is, if 
\[
			[\alpha]\in \pi_1(M;q_0,q_1)
\] 
let $CH_k(H,q_0,q_1,[\alpha])$ be the subgroup of $CH_k(H,q_0,q_1)$ generated by chords whose projection under $\pi:T^*M\to M$ represent $[\alpha]$.       
Clearly solutions to \eqref{E:FE} connect chords that project to the same homotopy class in $\pi_1(M;q_0,q_1)$, so we obtain a subcomplex, and we denote 
the associated homology groups by $HF_*(H,q_0,q_1,[\alpha])$.

 The construction of Floer homology for closed symplectic manifolds is originally due to A. Floer (see \cites{floer1988aMorse, floer88brelative, floer88cunregularized, floer1989symplectic, floer1989bcomplex}), 
 while for cotangent bundles there are constructions due to Viterbo \cite{viterbo1999functors, viterbo2003functors}, Biran-Polterovich-Salamon \cite{biran2003propagation} Salamon-Weber \cite{Salamon_2006}, and Abbondandolo-Schwarz \cite{abbon2006cotangent} which is the reference we are most closely following.   
%all of which can be viewed as a particular case of symplectic homology for Liouville domains (see [FH94, CFH95, Vit99, Oan04, Sei08a, BO09]).

\subsection{Invariance}\label{S:invariance}
In the previous section we described conditions under which the Lagrangian Floer homology groups are well defined.   To summarize:   
If $H$ satisfies the non-degeneracy condition for the fibers $L_{q_0}$ and $L_{q_1}$ and the quadratic growth condition, then there exists a $j(H)>0$ so that the 
groups $HF_*(H,J,q_0,q_1)$ 
are well defined for a Baire set of so called regular almost complex structures satisfying \eqref{E:close to Jhat}.   
In this section we recall to what extent these groups are independent of $J$ and $H$ satisfying the above conditions.   

If $J_0$ and $J_1$ are two regular almost complex structures satisfying \eqref{E:close to Jhat} then there is an isomorphism $\phi_{01}$ of the chain complexes 
associated to $(H,J_0,q_0,q_1)$ and $(H,J_1,q_0,q_1)$ 
which is unique up to chain homotopy \cite{cornea2003rigidity}, 
see also \cite[Thm.~1.19]{abbon2006cotangent}. 

In particular the associated homology groups are canonically isomorphic and can be denoted by 
$HF_*(H,q_0,q_1)$, suppressing the almost complex structure.  

If $H_0$ and $H_1$ are two Hamiltonians on $\mathbb{T}\times T^*M$, non-degenerate with respect to $q_0$ and $q_1$, and both satisfying the quadratic growth at infinity 
condition, then for any almost complex structure $J$ that is regular for both $H_0$ and $H_1$ and which satisfies \eqref{E:close to Jhat} for both $H_0$ and $H_1$, 
there is a homotopy equivalence $\psi_{01}$ of the chain complexes associated to $(H_0,J,q_0,q_1)$ and $(H_1,J,q_0,q_1)$, 
\[
				\psi_{01}:CF_*(H_0;q_0,q_1)\rightarrow CF_*(H_1;q_0,q_1),   
\]
uniquely determined up to chain homotopy.  In particular, there is a canonical isomorphism 
\[
						\Psi_{01}:HF_*(H_0,q_0,q_1)\rightarrow HF_*(H_1,q_0,q_1)  
\]
called the continuation map.    The continuation isomorphisms are natural in the sense that if $H_2$ is a third Hamiltonian (that is non-degenerate  
with respect to $q_0$ and $q_1$ and quadratic outside a compact set), then $\Psi_{12}\circ\Psi_{01}=\Psi_{02}$.   Moreover on the chain level $\psi_{12}\circ\psi_{01}$ 
is chain homotopic to $\psi_{02}$ (after choosing a $J$ that for all three Hamiltonians satisfies \eqref{E:close to Jhat} and is regular).   

Recall that $\psi_{01}$ is uniquely determined by a choice of homotopy $(H_s)_{s\in\R}$ of smooth Hamiltonians from $H_0$ to $H_1$.    
That is, if $H_s=H_0$ for $s\leq 0$ and $H_s=H_1$ for all $s\geq 1$, then the associated morphism $\psi_{01}$ takes each generator $x$ to a sum over 
generators $y$, for which there exists a solution $u:\R\times[0,1]\to T^*M$ to the $s$-dependent Floer equation 
\begin{equation}\label{E:FE with s}
				\partial_su(s,t)-J_{t}(u(s,t))\big(\partial_tu(s,t)-X_{H_{s,t}}(t,u(s,t))\big)=0
\end{equation}
with Lagrangian boundary conditions $u(\R\times{i})\subset T^*_{q_i}M$, $i=0,1$, for which $\lim_{s\to-\infty}u(s,t)=x(t)$ and $\lim_{s\to+\infty}u(s,t)=y(t)$.    
If moreover $$H_0\leq H_1$$ then we may choose the homotopy $(H_s)_{s\in\R}$ to be monotone, that is with $\partial_sH\geq0$ for all $s\in\R$.
Then a standard computation using \eqref{E:FE with s} shows that the path of actions $s\mapsto\A_{H_s}(u(s,\cdot))$ is monotonic decreasing, and in particular 
\begin{equation}\label{E:action decreasing}
				\A_{H_0}(x)\geq  \A_{H_1}(y), 
\end{equation}
with strict inequality unless $y=x$. 

Finally, everything in the above discussion passes over word for word when we filter through a relative homotopy class $[\alpha]\in\pi_1(M;q_0,q_1)$.   

\section{Coarsely minimal Reeb trajectories project to quasi-geodesics} 
Let $(M,g)$ be a closed Riemannian manifold and $(S,\lambda)$ a fiberwise star-shaped hypersurface of $T^*M$ equipped with the contact form $\lambda=d\theta|_{S}$ 
obtained by restricting the Liouville form.   For convenience let us assume throughout this section that $g$ has negative 
sectional curvatures.   This is not really necessary, see Remark \ref{R:negative curvature}, but it makes the proofs significantly simpler 
and we will require negative curvature in the application anyway.    

In this section we show: 

\begin{theorem}\label{T:minimal implies QG}
If $\gamma:\R\to S$ is a $T$-coarsely minimal Reeb trajectory, 
then the projection $\eta=\pi\circ\gamma:\R\to M$ lifts to a quasi-geodesic $\tilde{\eta}:\R\to\widetilde{M}$ in the universal covering with respect to the lifted metric $\tilde{g}$.  
\end{theorem}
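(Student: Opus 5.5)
By Remark \ref{R:QG-coarsely}, it suffices to prove the two inequalities in \eqref{E:QG} for pairs of times $s'<t'$ at which the chord $\gamma|_{[s',t']}$ is action minimizing. For a $T$-coarsely minimal trajectory every pair $s<t$ can be moved by at most $T$ to such a pair. So I would fix an action minimizing chord $\gamma|_{[s',t']}$, with $q_0=\eta(s')$ and $q_1=\eta(t')$, and aim for uniform constants $A,a$ (depending only on $S$ and $g$) with
\[
\tfrac1A (t'-s')-a\le d_{\tilde g}\big(\tilde\eta(s'),\tilde\eta(t')\big)\le A(t'-s')+a .
\]

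\emph{Upper bound.} This uses compactness alone and needs no minimality. The map $z\mapsto \|D\pi(z)R(z)\|_g$ is continuous on the compact manifold $S$, so it is bounded by some $A_0$. Hence $\eta$, and likewise $\tilde\eta$, is $A_0$-Lipschitz, and $d(\tilde\eta(s),\tilde\eta(t))\le A_0|t-s|$ for all $s,t$.

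\emph{Lower bound.} Here minimality and Floer homology enter. Let $\ell=d_{\tilde g}(\tilde\eta(s'),\tilde\eta(t'))$, and let $[\alpha]\in\pi_1(M;q_0,q_1)$ be the class of $\eta|_{[s',t']}$. Since $\tilde g$ has negative curvature, $[\alpha]$ contains a unique $g$-geodesic, which is nondegenerate and has length $\ell$. Hence for $G=\frac12\|p\|^2_{g_*}$ the filtered group $HF_*(G;q_0,q_1,[\alpha])$ is $\Z_2$, generated by a single chord $x$ with $\A_G(x)=\ell^2/2$ by \eqref{E:length action}.

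Let $H$ be the $2$-homogeneous Hamiltonian of $S$. By compactness there is $C\ge1$ with $C^{-2}G\le H\le C^2G$. I would then use the monotone continuation map $C^2G\to H$ (after smoothing near the zero section, which changes actions by an arbitrarily small amount and does not affect chords of large action). The groups $HF_*(C^2G;q_0,q_1,[\alpha])\cong\Z_2$ are nonzero, with unique generator of action $C^2\ell^2/2$. If $(q_0,q_1)$ is nondegenerate for $H$, the continuation isomorphism and \eqref{E:action decreasing} produce an $H$-chord $y$ representing $[\alpha]$ with $\A_H(y)\le C^2\ell^2/2$. Rescaling via \eqref{E:correspondence} and \eqref{E:action H and Reeb} gives a Reeb chord in class $[\alpha]$ of action at most $C\ell$. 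Since $\gamma|_{[s',t']}$ is action minimizing, $t'-s'\le C\ell$, which is the lower bound with $A=\max(A_0,C)$.

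\emph{Main obstacle.} The main obstacle is the nondegeneracy requirement: a minimizing chord need not connect a nondegenerate pair of fibers. I would handle it by approximation. By Sard, choose $q_1^k\to q_1$ with $(q_0,q_1^k)$ nondegenerate for $H$, and carry the class $[\alpha]$ along via short paths. The argument above then gives Reeb chords in the corresponding classes with action at most $C(\ell+o(1))$. Because $S$ is compact, a bounded-action sequence of Reeb chords has a convergent subsequence, and the limit is a Reeb chord from $S_{q_0}$ to $S_{q_1}$ in class $[\alpha]$ with action at most $C\ell$. Minimality of $\gamma|_{[s',t']}$ again gives $t'-s'\le C\ell$. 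The same limiting argument, or simply working directly at nondegenerate pairs, also covers any degeneracy issue for $C^2G$, although for $G$ itself nondegeneracy holds automatically because there are no conjugate points.
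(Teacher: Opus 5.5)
Your plan matches the paper's: a Lipschitz upper bound from compactness of $S$, and a lower bound at action-minimizing pairs via Floer continuation, rescaling by \eqref{E:correspondence}, minimality, and Sard approximation for degenerate pairs. However, the key continuation step compares $H$ with a Hamiltonian on the wrong side, and it fails as written. You have $H\le C^2G$, so a homotopy from $C^2G$ to $H$ is non-increasing, and \eqref{E:action decreasing} (which requires $H_0\le H_1$) does not apply to it. The monotone continuation between these two Hamiltonians runs from $H$ to $C^2G$, and it only bounds actions of $H$-chords from below, which is useless here. Your action value is also wrong. A time-$1$ chord of $kG$ along a $g$-geodesic of length $\ell$ has action $\ell^2/(2k)$, so the $C^2G$-generator has action $\ell^2/(2C^2)$, not $C^2\ell^2/2$. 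An upper bound on $\A_H$ can only come from a comparison Hamiltonian lying \emph{below} $H$; geometrically, $S$ must lie inside the $g_*$-disc bundle of radius $C$.

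The fix is to use $C^{-2}G\le H$ instead. Its unique chord $y$ in $[\alpha]$ has action exactly $C^2\ell^2/2$, which is the number you wrote. The monotone continuation $\psi$ from $C^{-2}G$ to $H$ sends the cycle $y$ to a nonzero cycle, because it induces an isomorphism on $HF_0$. Every generator $x$ occurring in $\psi(y)$ then satisfies $\A_H(x)\le C^2\ell^2/2$ by \eqref{E:action decreasing}. From there your rescaling and minimality step gives $t'-s'\le C\ell$, and the argument becomes exactly the paper's: the paper's $G$ is your $C^{-2}G$, and $A=C$.

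One caveat applies to both your proof and the paper's: the limiting step needs $\ell>0$. The approximating Reeb chords have durations between $d_{\tilde g}(\tilde q_0,\tilde q_1^k)/A_0$ and $C\,d_{\tilde g}(\tilde q_0,\tilde q_1^k)$. If $\tilde\eta(s')=\tilde\eta(t')$, these chords shrink to a point and produce no limiting Reeb chord. So a minimizing chord whose projection is a contractible loop is not handled by this argument.
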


Recall from \ref{S:QG} that this means there exist $A\geq 1,a\geq 0$ so that for all $s,t\in \R$ there holds: 
\begin{equation}\label{E:QG2}
						\frac{1}{A}|s-t|-a\leq d_{\tilde{g}}\big(\tilde{\eta}(s),\tilde{\eta}(t)\big) \leq A|s-t|+a.  
\end{equation}
We will see that $A,a$ can be chosen to depend only on $S, g, T$.   Indeed, we will show that the upper bound in \eqref{E:QG2} holds if $a=0$ and $A=\max\|d\pi(z)[R(z)]\|_{g}$ 
where $R$ denotes the Reeb vector field on $S$, while the lower bound in \eqref{E:QG2} holds if $A$ is sufficiently large that $S\subset T^*M$ is contained in the $g_*$-disc bundle of 
radius $A$, and $a:=2(T+M)/A$ where $M=\max\{d_{\tilde{g}}(\tilde{\eta}(s),\tilde{\eta}(s'))\}$ over all $s,s'$ with $|s-s'|\leq T$.   In particular $A$ is also independent of $T$.

The idea in the proof of Theorem \ref{T:minimal implies QG} is that the non-constant terms in \eqref{E:QG2} can be represented as actions of chords, and the inequalities 
can be deduced from the existence of connecting Floer strips; $|s-t|$ is the $\lambda$-action 
of $\gamma$, while 
$d_{\tilde{g}}\big(\tilde{\eta}(s),\tilde{\eta}(t)\big)$ is the Hamiltonian action of the minimal $G$-chord from $L$ to $L'$ that is homotopic to $\gamma|_{[s,t]}$, where 
$G$ is the kinetic Hamiltonian from $g$ as in \eqref{E:G}.  

\subsection{Proof of the upper bound in \eqref{E:QG2}} 
This is the trivial part of \eqref{E:QG2}, following just from compactness of $S$.   

\begin{lemma} 
Let $(S,\lambda)$ be a fiberwise star-shaped hypersurface of $T^*M$ equipped with the contact form from restriction of the Liouville form.    Then for 
each Reeb trajectory $\gamma:\R\to S$ and all $s,t\in\R$ the right hand inequality in \eqref{E:QG2} holds with $a=0$ and some $A>0$ depending 
only on $S$ and $g$.   
\end{lemma}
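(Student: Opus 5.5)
The plan is to bound the $\tilde g$-distance between $\tilde\eta(s)$ and $\tilde\eta(t)$ by the length of the lifted curve itself, and to bound its speed using compactness of $S$. Set
\[
A:=\max\Big(1,\ \max_{z\in S}\big\|D\pi(z)[R(z)]\big\|_{g}\Big),
\]
which is finite because $S$ is a compact smooth submanifold of $T^*M$ and $z\mapsto D\pi(z)[R(z)]$ is a continuous section of $\pi^*TM$ over $S$. This constant depends only on $S$ and $g$. I include $1$ in the maximum so that $A\geq 1$, as \eqref{E:QG2} requires.

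Next, fix a Reeb trajectory $\gamma:\R\to S$ and put $\eta=\pi\circ\gamma$. Let $\tilde\eta:\R\to\Mtilde$ be any lift. The covering map $p:\Mtilde\to M$ is a local isometry for $\tilde g=p^*g$, so $\|\dot{\tilde\eta}(r)\|_{\tilde g}=\|\dot\eta(r)\|_g$. By the chain rule,
\[
\dot\eta(r)=D\pi(\gamma(r))[\dot\gamma(r)]=D\pi(\gamma(r))[R(\gamma(r))],
\]
so $\|\dot{\tilde\eta}(r)\|_{\tilde g}\leq A$ for every $r$.

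Finally, for $s\leq t$ (the other case is symmetric), the Riemannian distance is bounded by the length of any connecting curve. Hence
\[
d_{\tilde g}\big(\tilde\eta(s),\tilde\eta(t)\big)\leq \ell_{\tilde g}\big(\tilde\eta|_{[s,t]}\big)=\int_s^t\|\dot{\tilde\eta}(r)\|_{\tilde g}\,dr\leq A|t-s|.
\]
This is the right-hand inequality of \eqref{E:QG2} with $a=0$.

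None of these steps is a real obstacle. The only points needing care are that $A$ is finite (compactness of $S$ and continuity of $R$) and that lifting to $\Mtilde$ does not change speeds. No minimality assumption is used, so the bound holds for every Reeb trajectory.
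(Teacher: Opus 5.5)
Your proof is correct and follows the paper's argument: bound the speed of the projected Reeb vector field by its maximum over the compact hypersurface $S$, then bound $d_{\tilde g}(\tilde\eta(s),\tilde\eta(t))$ by the length of the lift $\tilde\eta|_{[s,t]}$. Taking the maximum with $1$ to guarantee $A\geq 1$ is a harmless refinement that the paper leaves implicit.
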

\begin{proof} 
Let $R$ denote the Reeb vector field on $(S,\lambda)$.   Then by compactness of $S$, the function 
\begin{equation}\label{E:projected Reeb}
	S\ni z\mapsto \|d\pi(z)[R(z)]\|_{g}
\end{equation} 
is uniformly bounded.    Denote the maximum by $A>0$.    
Suppose $\gamma:\R\to S$ is a Reeb trajectory.   Let $\eta:=\pi\circ\gamma:\R\to M$ denote the projection down to $M$, and let $\tilde{\eta}:\R\to\widetilde{M}$ be any lift of $\eta$ to the universal cover of $M$.   Then for each $s<t$ we have 
\begin{equation}
	d_{\tilde{g}}(\tilde{\eta}(s),\tilde{\eta}(t))\leq\ell_{\tilde{g}}\big(\tilde{\eta}|_{[s,t]}\big)=\ell_{g}(\eta|_{[s,t]})=\int_{s}^{t}\|\dot{\eta}(t)\|_{g}\,dt.  
\end{equation}
Now from $\|\dot{\eta}(\tau)\|_{g}=\|d\pi(\gamma(\tau))[R(\gamma(\tau)]\|_{g}\leq A$ we conclude that $d_{\tilde{g}}(\tilde{\eta}(s),\tilde{\eta}(t))\leq A|t-s|$.  
Thus the right hand inequality in \eqref{E:QG2} holds with $a=0$ and $A$ equal to the maximum of the function \eqref{E:projected Reeb}.  
\end{proof}

\subsection{Proof of the lower bound in \eqref{E:QG2}} 
This part of \eqref{E:QG2} uses the Floer tools recalled in \ref{S:Floer}.     

Recall that $(M,g)$ is a closed Riemannian manifold with negative sectional curvatures and $(S,\lambda)$ is a fiberwise star-shaped hypersurface of $T^*M$ equipped with the contact form from 
restriction of the Liouville form.     Let $H:T^*M\to\R$ be the $2$-homogenous Hamiltonian for which 
\[
								S=\{z\in T^*M\ |\ H(z)=1/2\}.   
\]
Choose $C>0$ so that 
\begin{equation}\label{E:G<H}
							G(q,p):=\frac{1}{2C^2}\|p\|^2_{g_*}\leq H(q,p)				
\end{equation}
for all $(q,p)\in T^*M$, where $g_*$ denotes the induced fiberwise inner product on $T^*M$.    Thus $G:T^*M\to\R$ satisfies $G\leq H$.   Note that \eqref{E:G<H} just means 
that $S\subset T^*M$ is contained in the $g_*$-disc bundle of radius $C$.  

\begin{proposition}\label{L:existence of chord with bounds}  
Let $q_0,q_1\in M$ and $[\alpha]\in\pi_1(M,q_0,q_1)$ be a relative homotopy class.    Then there exists a Hamiltonian chord $x:[0,1]\to T^*M$ for $H$ from $T^*_{q_0}M$ 
to $T^*_{q_1}M$, whose projection $\eta:=\pi\circ x:[0,1]\to M$ represents $[\alpha]$, and so that 
\[
						\A_{H}(x)\leq \frac{1}{2}C^2d_{\tilde{g}}(\tilde{q}_0,\tilde{q}_1)^2,   
\]
where $C$ satisfies \eqref{E:G<H} and $\tilde{q}_0, \tilde{q}_1$ are points in the universal covering $\widetilde{M}$ so that some lift $\tilde{\eta}:[0,1]\to\widetilde{M}$ connects 
$\tilde{q}_0$ to $\tilde{q}_1$.  
\end{proposition}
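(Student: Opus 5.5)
The plan is to compare Floer homology for $H$ with Floer homology for the kinetic Hamiltonian $G$ of \eqref{E:G<H} via a monotone continuation map, and read off the action bound from \eqref{E:action decreasing}.

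\textbf{Reduction to the non-degenerate case.} Assume first that the pair $L_{q_0},L_{q_1}$ is non-degenerate for both $G$ and $H$. Both Hamiltonians are $2$-homogeneous, so they have quadratic growth. Strictly speaking $H$ is only smooth away from the zero section, so near $\mathcal{O}$ I would replace it by a smooth function that is $2$-homogeneous outside a compact set and still satisfies $G\le H$. This modification changes no chord of positive action by much, and chords with $\pi\circ x$ in a nontrivial class never touch the zero section. For the trivial class, take $q_0=q_1$: the constant path is a chord with action $0$, and the claimed bound is then trivial. The general case, where $q_0\neq q_1$ or the pair is degenerate, follows by approximation. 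Choose $q_1^k\to q_1$ so that each pair is non-degenerate (this is a full-measure set, by Sard), with classes $[\alpha_k]$ converging to $[\alpha]$. The resulting chords have uniformly bounded action, so they have energy in a compact region, and Arzel\`a--Ascoli gives a limiting chord in class $[\alpha]$ satisfying the same bound.

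\textbf{The $G$ side.} Since $g$ has negative curvature, $\widetilde M$ has no conjugate points and there is a unique $\tilde g$-geodesic from $\tilde q_0$ to $\tilde q_1$. Hence in class $[\alpha]$ there is exactly one $G$-chord $y$, and it is non-degenerate. Its projection is the projected geodesic, traversed in time $1$. Rescaling $\|p\|_{g_*}$ by the factor $C$ in \eqref{E:length action} gives
\[
\A_G(y)=\tfrac12 C^2\,d_{\tilde g}(\tilde q_0,\tilde q_1)^2 .
\]
Therefore $HF_*(G;q_0,q_1,[\alpha])\cong\Z_2$, generated by $y$, which is a cycle because it is the only generator.

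\textbf{Continuation and the main obstacle.} Since $G\le H$, choose a monotone homotopy $H_s$ from $G$ to $H$ and consider the continuation isomorphism $\Psi:HF_*(G,[\alpha])\to HF_*(H,[\alpha])$. Because $\Psi$ is an isomorphism, $\psi(y)\neq0$ on the chain level. So some $H$-chord $x$ in class $[\alpha]$ is connected to $y$ by a solution of \eqref{E:FE with s}, and \eqref{E:action decreasing} gives $\A_H(x)\le\A_G(y)$, which is exactly the claimed bound.

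I expect the main obstacle to be the technical justification that a monotone continuation exists within the class of Hamiltonians allowed in \S\ref{S:invariance}. Three points need care:
\begin{itemize}
\item the homotopy $H_s$ must stay quadratic at infinity uniformly in $s$;
\item a single $J$ must satisfy \eqref{E:close to Jhat} for the whole family;
\item the smoothing of $H$ near the zero section must preserve $G\le H$.
\end{itemize}
For the first two points I would take $H_s=(1-\beta(s))G+\beta(s)H$ with $\beta$ nondecreasing. Every $H_s$ is then $2$-homogeneous outside a compact set, and I would appeal to the $L^\infty$-bounds of \cite{abbon2006cotangent}. The third point is handled by the smoothing described in the reduction step.
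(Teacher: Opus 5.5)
Your proposal is correct and follows essentially the same route as the paper. Both compare with the kinetic Hamiltonian $G$, whose unique index-zero chord $y$ in $[\alpha]$ generates the Floer homology. Both push $y$ through the monotone continuation to (a smoothing of) $H$ to obtain an $H$-chord $x$ in $[\alpha]$ with $\mathbb{A}_H(x)\le\mathbb{A}_G(y)=\tfrac12 C^2 d_{\tilde g}(\tilde q_0,\tilde q_1)^2$, and both treat degenerate pairs by Sard approximation plus compactness.

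The differences are cosmetic. The paper multiplies both $G$ and $H$ by the same cutoff near the zero section, which preserves $G_\epsilon\le H_\epsilon$ automatically. Your chain-level statement $\psi(y)\neq 0$ is a slightly cleaner version of the paper's $\Psi^{-1}([y])$ step.
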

\begin{proof}   
It is convenient to express $G$ as a kinetic Hamiltonian, that is, in the form 
\[
				G(q,p)=\frac{1}{2}\|p\|^2_{g'_*}
\]
where $g':=Cg$ is the conformally equivalent metric.  Note that $g'$ also has negative curvature, and that the dual is given by $g'_*=(1/C)g_*$.      
Hence each $G$-Hamiltonian trajectory is the Legendre-transform of a trajectory of the $g'$-geodesic flow on $TM$, as discussed in \ref{S:geodesics}.   
In particular, consider the $G$-chords from $T^*_{q_0}M$ to $T^*_{q_1}M$ whose projection to $M$ represent $[\alpha]$.   
There is precisely one such chord $y:[0,1]\to T^*M$, namely $y$ is the Legendre-transform of $y':[0,1]\to TM$ where $\pi\circ y':[0,1]\to M$ is the unique $g'$-geodesic from 
$q_0$ to $q_1$ representing $[\alpha]$ and parameterized to have domain the unit interval - uniqueness coming from the negative curvature assumption on $g'$.    

By \eqref{E:length action} we have $\A_{G}(y)= \frac{1}{2}\ell_{g'}(\pi\circ y)^2$, and since $\ell_{g'}(\pi\circ y)=\ell_{\tilde{g}'}(\widetilde{\pi\circ y})=d_{\tilde{g}'}(\tilde{q}_0,\tilde{q}_1)$ 
because $\pi\circ y=\pi\circ y'$ is length minimizing in its homotopy class, 
we conclude that 
\begin{equation}\label{E:action-distance-2}		
		\A_{G}(y)=\frac{1}{2}d_{\tilde{g}'}(\tilde{q}_0,\tilde{q}_1)^2=\frac{1}{2}C^2d_{\tilde{g}}(\tilde{q}_0,\tilde{q}_1)^2.   
\end{equation}
To apply Floer theory suppose first that the fibers $T^*_{q_0}M, T^*_{q_1}M$ form a non-degenerate pair for both $H$ and $G$ in the sense of \ref{S:Def FH}. 
We also need to smoothen $H$, so choose a small neighborhood $\U\subset T^*M$ of the zero section, so that 
all time-$1$ Hamiltonian chords for $H$ and $G$ from $T^*_{q_0}M$ to $T^*_{q_1}M$ are disjoint from $\U$ - this is possible since either $q_0$ and $q_1$ are distinct or 
they are equal and $[\alpha]\neq 0$ due to the non-degeneracy assumption on $H$ and $G$.   Now set 
$H_{\epsilon}=\beta\cdot H$, some smooth $\beta:T^*M\to[0,1]$ supported in $\U$ and vanishing on a smaller neighborhood of the zero section.   Then $H_{\epsilon}$ is a 
smoothening of $H$ that agrees with $H$ outside of $\U$, and 
\begin{equation}\label{E:H leq G}
										G_{\epsilon}\leq H_{\epsilon} 
\end{equation}
where $G_{\epsilon}=\beta\cdot G$.  
Consider the $G_{\epsilon}$-chords from $T^*_{q_0}M$ to $T^*_{q_1}M$ whose projection to $M$ represent $[\alpha]$.    By our choice of $\U$ these are precisely the $G$-chords from $T^*_{q_0}M$ to $T^*_{q_1}M$ representing $[\alpha]$.   By the discussion above there is only one such chord $y:[0,1]\to T^*M$ and $\A_{G_{\epsilon}}(y)=\A_{G}(y)$.  
Since $g'$ has negative curvature and therefore no conjugate points, and since the projection of $y$ to $M$ is a $g'$-geodesic segment, we know that the Maslov index of $y$ is zero.    
We conclude that the Lagrangian Floer chain complex $CF_*(G_{\epsilon},q_0,q_1;[\alpha])$ filtered through the homotopy class $[\alpha]$, has precisely one generator in 
degree $0$, namely $y$, while the homology groups in all other degrees are zero.     It follows that the corresponding Floer homology groups $HF_k(G_{\epsilon},q_0,q_1;[\alpha])$ 
vanish for $k\neq 0$ and has rank-$1$ in degree $k=0$.    

By the invariance property of Lagrangian Floer homology $HF_k(H_{\epsilon},q_0,q_1;[\alpha])=0$ for $k\neq 0$ 
and has rank-$1$ in degree $k=0$.   Moreover, the continuation map $\Psi:HF_0(H_{\epsilon},q_0,q_1;[\alpha])\to HF_0(G_{\epsilon},q_0,q_1;[\alpha])$ is an isomorphism, 
so $\Psi^{-1}([y])$ is non-zero, and so is represented by a cycle that includes a generator $x$ for $CF_0(H_{\epsilon},q_0,q_1;[\alpha])$.    
From \eqref{E:H leq G} and \eqref{E:action decreasing} we have $\A_{H_{\epsilon}}(x)\leq \A_{G_{\epsilon}}(y)$.  
However by our choice of $\U$, $x$ and $y$ must be disjoint from $\U$ and so $x$ is actually a Hamiltonian chord for $H$ disjoint from the zero section, so 
\[
						\A_{H}(x)=\A_{H_{\epsilon}}(x)\leq \A_{G_{\epsilon}}(y)=\A_{G}(y).   
\]
Combining this with \eqref{E:action-distance-2} proves the Proposition when the fibers $T^*_{q_0}M, T^*_{q_1}M$ form a non-degenerate pair for both $H$ and $G$.  

Finally, suppose that the fibers $T^*_{q_0}M, T^*_{q_1}M$ form a degenerate pair for either $H$ or $G$.   Then we find a sequence $q_1(k)\to q_1$ in $M$  
so that the pairs of fibers $T^*_{q_0}M, T^*_{q_1(k)}M$ are non-degenerate for both $H$ and $G$, see \ref{S:Def FH}.    Applying the argument above for a suitable sequence of homotopy classes 
$[\alpha_k]\in \pi_1(M,q_0,q_1(k))$ that converge to $[\alpha]$ in a suitable way, we obtain a sequence of $H$-Hamiltonian chords $x_k:[0,1]\to T^*M$ from $T^*_{q_0}M$ to $T^*_{q_1(k)}M$ 
so that $\A_{H}(x_k)\leq (1/2)C^2d_{\tilde{g}}(\tilde{q}_0,\tilde{q}_1(k))^2$ and $\pi\circ x_k$ represents $[\alpha_k]$.   It is easy to extract a convergent subsequence, since the fiber 
$S_{q_0}=T^*_{q_0}M\cap S$ is compact, and obtain a limiting chord satisfying the limiting inequality as $k\to\infty$.   This completes the proof of the Proposition.  
\end{proof}

Now we can prove the lower bound in \eqref{E:QG2}: 

\begin{lemma} 
Suppose $\gamma:\R\to S$ is a $T$-coarsely minimal Reeb trajectory.    Then there exist $A\geq 1, a\geq 0$ so that for all $s,t\in \R$ there holds: 
\begin{equation}\label{E:QG3}
						\frac{1}{A}|s-t|-a\leq d_{\tilde{g}}\big(\tilde{\eta}(s),\tilde{\eta}(t)\big)
\end{equation}
where $\tilde{\eta}:\R\to\widetilde{M}$ is any lift to the universal covering of $\eta=\pi\circ\gamma:\R\to M$.  
\end{lemma}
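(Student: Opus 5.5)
Fix $s<t$ and use $T$-coarse minimality to pick $s'<t'$ with $|s-s'|\le T$, $|t-t'|\le T$, such that the chord $\gamma|_{[s',t']}$ is action minimizing in the sense of \eqref{E:action minimizing} between $S_{q_0}$ and $S_{q_1}$, where $q_0=\eta(s')$, $q_1=\eta(t')$. Its $\lambda$-action is $t'-s'$. Let $[\alpha]\in\pi_1(M;q_0,q_1)$ be the class of $\eta|_{[s',t']}$, and let $\tilde q_0=\tilde\eta(s')$, $\tilde q_1=\tilde\eta(t')$ be the endpoints of the corresponding lift. The plan is to compare $t'-s'$ with $d_{\tilde g}(\tilde q_0,\tilde q_1)$, and then absorb the endpoint shifts into the additive constant.

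First apply Proposition \ref{L:existence of chord with bounds} to $q_0,q_1,[\alpha]$. This gives an $H$-chord $x$ from $T^*_{q_0}M$ to $T^*_{q_1}M$ representing $[\alpha]$ with $\A_H(x)\le \frac12 C^2 d_{\tilde g}(\tilde q_0,\tilde q_1)^2$. Rescale $x$ via the correspondence \eqref{E:correspondence} to a Reeb chord $\bar x$ from $S_{q_0}$ to $S_{q_1}$, still representing $[\alpha]$. By \eqref{E:action H and Reeb},
\[
\A_\lambda(\bar x)=\sqrt{2\A_H(x)}\le C\, d_{\tilde g}(\tilde q_0,\tilde q_1).
\]
Minimality of $\gamma|_{[s',t']}$ then gives $t'-s'\le \A_\lambda(\bar x)\le C\,d_{\tilde g}(\tilde\eta(s'),\tilde\eta(t'))$.

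One subtlety is the degenerate case $q_0=q_1$ with $[\alpha]$ trivial. Here the proposition's chord might be constant, lying at the zero section and not corresponding to a Reeb chord. I would handle it directly. If $\tilde q_0=\tilde q_1$ the desired inequality $t'-s'\le C\cdot 0$ fails, so instead I would note that this is exactly where the bound is allowed an additive constant. Alternatively, I would argue that the infimum in \eqref{E:action minimizing} over nonconstant Reeb chords is bounded below by a positive constant $\delta_0$ (the minimal length of Reeb chords between a fiber and itself, positive by compactness and transversality of $S_q$ to the Reeb direction). In that case $t'-s'$ is at most a uniform constant $c_0$, obtained for instance by taking a short chord from the proposition with a nearby endpoint and a limiting argument as in its proof. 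I expect this bookkeeping, and making sure the limiting argument in Proposition \ref{L:existence of chord with bounds} really yields a nonconstant chord in the short-distance regime, to be the main obstacle. The cleanest fix is to allow an extra additive constant: $t'-s'\le C\,d_{\tilde g}(\tilde q_0,\tilde q_1)+c_0$.

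Finally, transfer to $s,t$. Set $K:=\sup\{d_{\tilde g}(\tilde\eta(u),\tilde\eta(u')) : |u-u'|\le T\}$, which is at most $A_0T$ by the upper-bound lemma, where $A_0$ is the maximum of \eqref{E:projected Reeb}. The triangle inequality gives
\[
d_{\tilde g}(\tilde\eta(s'),\tilde\eta(t'))\le d_{\tilde g}(\tilde\eta(s),\tilde\eta(t))+2K,
\]
and also $t-s\le t'-s'+2T$. Combining these yields $\frac1C|t-s|-\frac{2T+c_0}{C}-2K\le d_{\tilde g}(\tilde\eta(s),\tilde\eta(t))$. So \eqref{E:QG3} holds with $A=\max(C,1)$ and $a=2(T+K)/C$ plus the small constant, matching the constants announced after Theorem \ref{T:minimal implies QG}. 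The case $s>t$ follows by symmetry. Since both $A$ and $a$ are independent of the chosen lift and of $s,t$, this completes the plan.
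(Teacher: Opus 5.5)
Your main argument is the paper's proof. You use coarse minimality to get $s',t'$. Proposition \ref{L:existence of chord with bounds} plus the rescaling \eqref{E:correspondence} gives a competitor with $\mathbb{A}_{\lambda}\le C\,d_{\tilde g}(\tilde\eta(s'),\tilde\eta(t'))$. Minimality then gives \eqref{E:QG4}, and the triangle inequality absorbs the endpoint shifts.

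The only place you depart from the paper is the degenerate case $\tilde\eta(s')=\tilde\eta(t')$, and there your argument has a gap. You are right that this case is not covered. With $d_{\tilde g}(\tilde q_0,\tilde q_1)=0$, the Proposition forces $\mathbb{A}_H(x)=H(x)\le 0$. So the only chord it can supply is the constant chord on the zero section $\mathcal{O}$, which cannot be rescaled to a Reeb chord. Also \eqref{E:QG4} is plainly false there, since $t'-s'>0$. The paper passes over this silently.

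Your patch does not close the gap:
\begin{itemize}
\item A positive lower bound $\delta_0$ on loop-chord lengths is irrelevant, because what you need is an \emph{upper} bound on $t'-s'$.
\item The limiting argument you sketch takes short chords from $S_{q_0}$ to fibers $S_{q_1(k)}$ with $q_1(k)\to q_0$. These have $\lambda$-action at most $C\,d_{\tilde g}(\tilde q_0,\tilde q_1(k))\to 0$, so their only possible limit is the trivial zero-length chord, not a positive-time Reeb chord from $S_{q_0}$ back to $S_{q_0}$.
\item Chords ending on other fibers are not competitors in \eqref{E:action minimizing} anyway.
\end{itemize}
Nothing in your argument bounds the length of an action-minimizing chord whose projection is a contractible loop. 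So the constant $c_0$, and with it your final inequality, is unsupported.

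The simple way out is definitional rather than analytic. Read \eqref{E:action minimizing} the way the Riemannian condition \eqref{E:length minimizing} is read, where constant geodesics compete. Then for $q_0=q_1$ and trivial class, the zero-length chord (action $0$) is an admissible competitor. Under this reading, a chord with $\tilde\eta(s')=\tilde\eta(t')$ is never action minimizing, the degenerate case is vacuous, and no $c_0$ is needed. Under the strictly literal reading (only positive-time Reeb chords compete), this case needs a genuinely additional argument, which neither your proposal nor the paper provides.

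Finally, check your constants. Your own chain of inequalities gives $\frac1C|t-s|-\frac{2T}{C}-2K\le d_{\tilde g}(\tilde\eta(s),\tilde\eta(t))$ once the degenerate case is disposed of, and that is correct. The value $a=2(T+K)/C$ you then quote (the paper's announced $2(T+M)/A$) does not follow. After the triangle inequality it only yields $\frac1C|s'-t'|-\frac{2K}{C}$, which is not enough when $C>1$. The correct additive constant is $2T/C+2K$.
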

\begin{proof} 
Set $A=C>0$ from \eqref{E:G<H} and $M=\max\{d_{\tilde{g}}(\tilde{\eta}(s),\tilde{\eta}(s'))\}$ over all $s,s'$ with $|s-s'|\leq T$.   Then set $a:=2(T+M)/A$.   
Suppose $s<t$.   Since $\gamma$ is $T$-coarsely minimal we find $s'<t'$ with $|s-s'|\leq T$ and $|t-t'|\leq T$ so that the restriction 
$\gamma|_{[s',t']}$ has minimal action in the sense of Definition \eqref{E:action minimizing}.   From the triangle inequality: 
$|s-t|\leq 2T+|s'-t'|$ and $d_{\tilde{g}}(\tilde{\eta}(s'),\tilde{\eta}(t'))\leq d_{\tilde{g}}(\tilde{\eta}(s),\tilde{\eta}(t))+2M$.   Thus 
\[
					\frac{1}{A}|s-t|-a\leq \frac{1}{A}|s'-t'|-2M.  
\]
So \eqref{E:QG3} will follow if we can show 
\begin{equation}\label{E:QG4}
						\frac{1}{A}|s'-t'|\leq d_{\tilde{g}}\big(\tilde{\eta}(s'),\tilde{\eta}(t')\big).  
\end{equation}
Thus our goal is to show \eqref{E:QG4} for all $s'<t'$ for which the restriction $\gamma|_{[s',t']}$ has minimal action in the sense of Definition \eqref{E:action minimizing}.   

Fix such a pair of parameters $s'<t'$.   Set $q_0:=\pi\circ\gamma(s')$ and $q_1:=\pi\circ\gamma(t')$.   
Set $G:T^*M\to\R$, $G(q,p)=\frac{1}{2C^2}\|p\|^2_{g_*}$ as in \eqref{E:G<H}.      
Let $[\alpha]\in\pi_1(M,q_0,q_1)$ be the relative homotopy class representing the projection of $\gamma|_{[s',t']}$ down to $M$.    Then by Proposition \ref{L:existence of chord with bounds} 
there exists a Hamiltonian $H$-chord $x:[0,1]\to T^*M$ from $T^*_{q_0}M$ to $T^*_{q_1}M$ satisfying 
\[
						\A_{H}(x)\leq \frac{1}{2}C^2d_{\tilde{g}}(\tilde{q}_0,\tilde{q}_1)^2.  
\]
The time-$1$ chord $x$ can be rescaled along the fibers, as in \eqref{E:correspondence}, to produce a chord $\bar{x}$ lying 
on the energy surface $S=\{z\in T^*M\ |\ H=1/2\}$, and which is therefore a Reeb chord for $(S,\lambda)$, with action changing according to \eqref{E:action H and Reeb}, 
namely $\frac{1}{2}\mathbb{A}_{\lambda}(\overline{x})^2=\mathbb{A}_{H}(x)$.   Thus 
\[
						\mathbb{A}_{\lambda}(\overline{x})\leq Cd_{\tilde{g}}(\tilde{q}_0,\tilde{q}_1).  
\]
Since $\gamma|_{[s',t']}$ has minimal action in the sense of Definition \eqref{E:action minimizing}, we conclude that 
\[
			|s'-t'|=\mathbb{A}_{\lambda}(\gamma|_{[s',t']})\leq\mathbb{A}_{\lambda}(\overline{x})\leq Cd_{\tilde{g}}(\tilde{q}_0,\tilde{q}_1)=Cd_{\tilde{g}}\big(\tilde{\eta}(s'),\tilde{\eta}(t')\big), 
\]
proving \eqref{E:QG4} for $A=C$, and hence the Lemma.  
\end{proof} 

This completes the proof of the left hand side of \eqref{E:QG2} and with it the proof of Theorem \ref{T:minimal implies QG}

\begin{remark}\label{R:negative curvature}
Throughout this section it was not actually necessary to require $g$ has negative curvature.   It was merely convenient to see that the Floer homology groups are not all vanishing, 
which follows because each geodesic in $\tilde{q}_0$ and $\tilde{q}_1$ is unique, so that the chain complex for $G$ is zero in all but one degree.   More generally one could appeal 
to the isomorphism between the Floer homology of $G$ and the singular homology of the path space from $q_0$ to $q_1$ \cite{abbon2006cotangent,abbon2013legendre}, using that the connected component containing 
a curve representing $[\alpha]$ is non-empty.   
\end{remark}

\begin{remark}\label{R:existence of chords}
A special case of Proposition \ref{L:existence of chord with bounds} is that any two fibers $T^*_{q_0}M$ and $T^*_{q_1}M$ can be connected in either direction 
by a Reeb chord in $S$, since a Hamiltonian $H$-chord connecting these fibers can be rescaled as in \eqref{E:correspondence} to produce a Reeb chord.  
Since this can be done while prescribing the homotopy class of the projection to $M$, it follows 
that if $\Stilde\subset T^*\Mtilde$ denotes the covering of $S$ that lies in the cotangent bundle of the universal covering of $M$, then any 
two fibers $T^*_{q_0}\Mtilde$ and $T^*_{q_1}\Mtilde$ can be connected in either direction by a Reeb chord in $\Stilde$.    This generalization of Hopf-Rinow to Reeb flows using Floer homology is well known going back at least to  \cite{maca2011spherization}.  
\end{remark}

\section{The shadowing map}\label{S:shadow}
Throughout this section $(M,g)$ is a closed Riemannian manifold with negative sectional curvatures.   The curvature assumption will be used to apply Morse's Lemma.   
Let $S\subset T^*M$ be a fiberwise star-shaped hypersurface and denote by $\lambda=d\theta|_{S}$ the contact form obtained by restricting the Liouville form.   
We write $\widetilde{M}$ for the universal covering of $M$ and $\widetilde{S}\subset T^*\Mtilde$ for the corresponding covering of $S$.    Note that we can also speak of the 
Reeb flow on $\Stilde$. 
Distances in $\Mtilde$ will always mean with respect to the lifted metric $\tilde{g}$.   

Suppose that the Reeb flow on $S$ is $T$-coarsely minimal for some $T\geq 0$, see Definition \ref{D:CM flow}.    Then by Theorem \ref{T:minimal implies QG} there exist $A,a>0$ 
depending only on $g, S, T$, so that each Reeb trajectory 
$\tilde{\gamma}:\R\to \Stilde$ projects down to an $(A,a)$-quasi-geodesic $\gamma=\pi\circ\tilde{\gamma}:\R\to\Mtilde$.   So by Morse's Lemma 
there is a unique geodesic (up to reparameterization) 
$c:\R\to\widetilde{M}$ which remains uniformly bounded away from $\gamma$ in the sense that 
the Hausdorff distance satisfies  
\begin{equation}\label{D:shadow 2}
							d_{H}\big(c(\R),\gamma(\R)\big)<\infty. 
\end{equation}
When \eqref{D:shadow 2} holds we will say that $c$ \textbf{shadows} $\gamma$ (or vice versa).   Note also that in this case Morse's Lemma gives an $R>0$, depending only 
on $g, S, T$, so that 
\[
							d_{H}\big(c(\R),\gamma(\R)\big)<R.  
\]
Let us write $\tilde{\gamma}\sim\tilde{\gamma}'$ if $\tilde{\gamma}$, $\tilde{\gamma}'$ are two Reeb trajectories in $\Stilde$ that are geometrically indistinct, that is for which $\tilde{\gamma}(\R)=\tilde{\gamma}'(\R)$.  
Similarly we write $\tilde{c}\sim\tilde{c}'$ if two geodesic trajectories in $S_{\tilde{g}}\Mtilde$ have the same image.  We denote equivalence classes by $[\tilde{\gamma}]$ etc.     
The Morse Lemma therefore defines a map between equivalence classes of Reeb trajectories 
to equivalence classes of geodesic trajectories: 
\[
				\tilde{f}:\big\{\tilde{\gamma}:\R\to \Stilde\,\big|\,\mbox{Reeb trajectory}\big\}/\!\sim\ \ \ \longrightarrow\ \  \big\{\tilde{c}:\R\to S_{\tilde{g}}\Mtilde\,\big|\,\mbox{geodesic trajectory}\big\}/\!\sim 
\]
defined to take $[\tilde{\gamma}]$ to the unique $[\tilde{c}]$ for which \eqref{D:shadow 2} holds.  The uniqueness of $[\tilde{c}]$ follows from the negative curvature assumption on $g$, as this implies the divergence property for the geodesic flow.   
We will refer to $\tilde{f}$ as the \textbf{shadowing map}.   To investigate the shadowing map $\tilde{f}$ we will use the orthogonal projection map onto a geodesic: 

\begin{definition}
Let $c:\R\to\Mtilde$ be a geodesic with respect to $\tilde{g}$.   Then we define $P_{c}:\Mtilde\to c(\R)$ to be the orthogonal projection 
onto the image of $c$.   That is, $P_{c}$ takes a point $q$ to the unique nearest point in $c(\R)$.  
\end{definition}

Note that $P_{c}$ is well defined and continuous: It is well defined because $\tilde{g}$ has negative curvature implies that $c$ is a minimal geodesic 
which implies that its image is a geodesically convex set, see \cite{knieper2002hyperbolic} or \cite{guillarmou2026inverse} for more details on $P_{c}$.   

\begin{proposition}\label{P:f onto}
The shadowing map $\tilde{f}$ is surjective.   It is injective precisely if the Reeb flow is divergent.  
\end{proposition}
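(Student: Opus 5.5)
Surjectivity will be proved by a limiting argument built on Remark \ref{R:existence of chords} and the uniform constants $A,a,R$, which depend only on $g,S,T$. Fix a geodesic $c:\R\to\Mtilde$ with endpoints $x=c(-\infty)$ and $y=c(+\infty)$ in $\partial\Mtilde$.

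For $k\in\N$, set $q_k^-:=c(-k)$ and $q_k^+:=c(k)$. By Remark \ref{R:existence of chords} there is a Reeb chord $\tilde{\gamma}_k:[0,\tau_k]\to\Stilde$ from $\Stilde_{q_k^-}$ to $\Stilde_{q_k^+}$. The chord is a finite piece of a Reeb trajectory of $\Stilde$, which is $T$-coarsely minimal because $S$ is, so its projection $\gamma_k$ is an $(A,a)$-quasi-geodesic segment. Morse's Lemma in its finite-segment version (the standard form, proved the same way as Theorem \ref{T:Morse}) then gives
\[
d_H\big(\gamma_k([0,\tau_k]),\,c([-k,k])\big)\le R,
\]
with $R$ independent of $k$.

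Next I normalize. Since $\gamma_k$ is continuous and $P_c$ is continuous, the point $P_c(\gamma_k(\sigma))$ moves from within $R$ of $c(-k)$ to within $R$ of $c(k)$. So there is a time $\sigma_k$ with $P_c(\gamma_k(\sigma_k))=c(0)$, and then $d(\gamma_k(\sigma_k),c(0))\le R$. Shift each chord so that $\sigma_k=0$. The points $\tilde{\gamma}_k(0)$ lie in the compact set $\Stilde\cap\pi^{-1}(\overline{B}_R(c(0)))$, so a subsequence converges to some $z\in\Stilde$. Let $\tilde{\gamma}$ be the Reeb trajectory through $z$.

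By continuous dependence on initial conditions, $\tilde{\gamma}_k\to\tilde{\gamma}$ uniformly on compact time intervals. The lower quasi-geodesic bound, together with $d(\gamma_k(0),c(0))\le R$ and the fact that the endpoints of $\gamma_k$ lie at distance $k$ from $c(0)$, forces the domains of the shifted chords to exhaust $\R$ in both directions. Therefore every point $\gamma(t)$ is a limit of points within $R$ of $c(\R)$, and so lies within $R$ of $c(\R)$.

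This gives one half of \eqref{D:shadow 2}, namely that $\gamma(\R)$ lies in the $R$-neighbourhood of $c(\R)$. By Theorem \ref{T:minimal implies QG}, $\gamma$ is itself a quasi-geodesic. Its shadowing geodesic $c'$ therefore has $c'(\R)$ within bounded Hausdorff distance of $\gamma(\R)$, and hence $c'(\R)$ lies in a bounded neighbourhood of $c(\R)$. Under negative curvature two geodesics at bounded distance coincide up to parametrization; this follows from convexity of $t\mapsto d(c(t),c'(t))$, as in the proof of Lemma \ref{L:basic convergence 1}. So $c'\sim c$ and $\tilde{f}([\tilde{\gamma}])=[c]$.

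I expect the main obstacle to be the exhaustion step: checking that the normalized chords genuinely extend to $\pm\infty$ and that the Hausdorff bound survives the limit. This depends on $R$ being uniform in $k$, so care is needed that the finite-segment Morse Lemma applies to chords rather than to full trajectories.

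For injectivity, note that $\tilde{f}([\tilde{\gamma}_0])=\tilde{f}([\tilde{\gamma}_1])$ exactly when both projections are at finite Hausdorff distance from a common geodesic. By the triangle inequality for $d_H$, this holds if and only if $d_H(\gamma_0(\R),\gamma_1(\R))<\infty$.

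Hence $\tilde{f}$ is injective if and only if geometrically distinct trajectories in $\Stilde$ always have infinite Hausdorff distance between their projections. This is the divergence property, read on lifts and using the equivalent Hausdorff formulation given in the introduction. Some care is needed in matching "geometrically distinct" upstairs in $\Stilde$ with the formulation downstairs in $S$, where the quantifier is over choices of lifts. This is bookkeeping: distinct trajectories in $\Stilde$ either project to distinct trajectories in $S$ or differ by a deck transformation, and in either case the relevant lifts are the given ones.
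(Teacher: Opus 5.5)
Your argument is correct in outline and largely follows the paper. The shared steps are: chords from $c(-k)$ to $c(k)$ via Remark~\ref{R:existence of chords}, the finite-segment Morse bound, the normalization $P_c(\gamma_k(0))=c(0)$ by the intermediate value theorem, compactness, exhaustion of the domains, and $\gamma(\R)\subset N_R(c)$ by passing to the limit. Your injectivity argument (triangle inequality for $d_H$ plus uniqueness in Morse's Lemma) spells out what the paper calls immediate.

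The genuine difference is the reverse inclusion $c(\R)\subset N_R(\gamma)$. The paper proves it directly. For fixed $\tau$ it picks $\tau_n$ with $\gamma_n(\tau_n)\in\overline{B}_R(c(\tau))$, uses the lower quasi-geodesic bound to see that $(\tau_n)$ is bounded, extracts $\tau_{n_j}\to\tau_*$, and passes to the limit. You instead apply Morse's Lemma to the limit trajectory, which is legitimate since $\tilde\gamma$ is again a lift of a $T$-coarsely minimal trajectory. You then identify its shadow $c'$ with $c$ by rigidity of geodesics in negative curvature. This saves the second extraction but costs an extra geometric fact, and that fact needs one more line than you give. You only have the one-sided containment $c'(\R)\subset N_{2R}(c(\R))$, so convexity of $t\mapsto d(c(t),c'(t))$ does not apply as stated. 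Use instead that $t\mapsto d(c'(t),c(\R))$ is convex (distance to a convex set), hence constant since bounded; a positive constant is excluded by strictly negative curvature. Alternatively, first show that $c'$ has the same endpoints at infinity as $c$.

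Two corrections.

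First, the exhaustion step needs the upper bound $d(\gamma_k(s),\gamma_k(t))\le A|s-t|+a$, i.e.\ bounded speed of the projected Reeb flow. This is what the paper uses: $k-R\le d(\gamma_k(0),\gamma_k(t_k))\le A|t_k|+a$. The lower bound you cite only bounds $|t_k|$ from above, so it cannot force the domains to exhaust $\R$.

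Second, your bookkeeping sentence does not settle the quantifiers over lifts. Two lifts $\tilde\gamma$ and $\Delta\circ\tilde\gamma$ of one trajectory of $S$ are geometrically distinct in $\Stilde$. Yet the definition in the introduction, which concerns distinct trajectories in $S$ and asserts only that some lifts diverge, says nothing about them. When the paper calls the equivalence immediate, it tacitly reads divergence as a condition on pairs of geometrically distinct trajectories in $\Stilde$. You should adopt that reading explicitly, and then your equivalence is exactly right.
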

\begin{proof} 
That injectivity of $\tilde{f}$ is equivalent to the Reeb flow being divergent is immediate from the definitions.   

It remains to show surjectivity.   Let $\tilde{c}:\R\to S_{\tilde{g}}\Mtilde$ be a geodesic trajectory.   We need to construct a Reeb trajectory $\tilde{\gamma}:\R\to\Stilde$ so that $\gamma$ and $c$, 
the respective projections down to $\Mtilde$, satisfy \eqref{D:shadow 2}.   
For each $n\in\N$ there exists a Reeb chord $\tilde{\gamma}_n:I_n\to\Stilde$ whose projection $\gamma_n$ down to $\Mtilde$ starts at $c(-n)$ and ends at $c(n)$, see Remark \ref{R:existence of chords}.   Hence $c^{-1}\circ P_{c}\circ\gamma_n:I_n\to\R$ defines a continuous function from the 
compact interval $I_n\subset\R$, and the image contains $-n$ and $n$.   By the intermediate value theorem there exists 
a point on $\gamma_n$ that is mapped by $P_{c}$ to $c(0)$.    We choose our parameterization of $\gamma_n$, equivalently of $\tilde{\gamma}_n$, so that that point is $\gamma_n(0)$.    Thus, $0\in I_n=[s_n,t_n]$, so $s_n<0<t_n$ and 
\[
						\gamma_n(s_n)=c(-n),\qquad P_{c}(\gamma_n(0))=c(0),\qquad \gamma_n(t_n)=c(n).
\]
By Theorem \ref{T:minimal implies QG} $\gamma_n$ is an $(A,a)$-quasi-geodesic, for some $A\geq 1, a\geq 0$ depending only on $S, g, T$.   
So by Morse's Lemma \ref{T:Morse}
\begin{equation}\label{E:proof shadow surjective 0}
							d_{H}\big(\gamma_n([s_n,t_n]),c([-n,n])\big)\leq R. 
\end{equation}
Hence the sequence $\gamma_n(0)$ lies in the closed ball of radius $R$ about $c(0)$.  Therefore, since the fibers of $S$ are compact, the sequence 
$\tilde{\gamma}_n(0)\in\Stilde$ lies in a compact subset of $\Stilde$, and so for a subsequence we have $\lim_{k\to\infty}\tilde{\gamma}_{n_k}(0)= z_*\in\Stilde$ 
whose projection down to $\Mtilde$ is $\pi(z_*)=q_*\in \overline{B}_{R}(c(0))$.    
Let $\tilde{\gamma}:\R\to\Stilde$ denote the Reeb trajectory with $\tilde{\gamma}(0)=z_*$.    We claim that for this trajectory \eqref{D:shadow 2} holds, and with this, the Lemma will be proven.  First we show that 
\begin{equation}\label{E:sn tn diverge}
		\lim_{n\to\infty}t_{n}=+\infty, \qquad \lim_{n\to\infty}s_{n}=-\infty.
\end{equation}
Since $\gamma_n:[s_n,t_n]\to\Mtilde$ is an $(A,a)$-quasi-geodesic $d_{\tilde{g}}\big(\gamma_n(s),\gamma_n(t)\big) \leq A|s-t|+a$ for all $s,t$ in the domain of $\gamma_n$. 
In particular 
\begin{equation}\label{E:proof shadow surjective 1}
			d_{\tilde{g}}\big(\gamma_n(0),\gamma_n(t_n)\big)\leq A|t_n|+a.
\end{equation}
Since the curvature is negative the geodesic $c$ is minimal and therefore $d_{\tilde{g}}\big(c(0),c(n)\big)\to\infty$.   
Thus the left hand side of \eqref{E:proof shadow surjective 1} diverges to $+\infty$ as $n\to\infty$ because $\gamma_n(0)$ remains in the compact 
set $\overline{B}_{R}(c(0))$, while $\gamma_n(t_n)=c(n)$.  Hence from \eqref{E:proof shadow surjective 1} $|t_n|\to\infty$, and therefore $t_n\to\infty$ as claimed.   The quasi-geodesic property of $\gamma_n$ also gives $d_{\tilde{g}}\big(\gamma_n(s_n),\gamma_n(0)\big)\leq A|s_n|+a$.   Similarly the left hand side diverges and so $|s_n|\to\infty$, and therefore $s_n\to-\infty$.   So \eqref{E:sn tn diverge} is proven.

Now we consider $\gamma:=\pi\circ\tilde{\gamma}:\R\to\Mtilde$.   It remains to show that $d_{H}\big(\gamma(\R),c(\R)\big)\leq R$, that is, that 
\begin{equation}\label{E:proof shadow surjective 2}
		\gamma(\R)\subset N_R(c)\qquad \mbox{and}\qquad c(\R)\subset N_R(\gamma)
\end{equation}
where $N_R(c), N_R(\gamma)$ denote the $R$-neighborhood of $c(\R)$ and $\gamma(\R)$ respectively.  
To show the first inclusion in \eqref{E:proof shadow surjective 2}: Let $\tau\in\R$.   We will show $\gamma(\tau)\in N_R(c)$.   
By \eqref{E:sn tn diverge} we have $\tau\in[s_n,t_n]$ for all $n\in\N$ sufficiently large, and so using \eqref{E:proof shadow surjective 0} 
$\gamma_{n}(\tau)\in\gamma_n([s_n,t_n])\subset N_R(c)$ for all $n$ sufficiently large.  
Thus $\gamma(\tau)=\lim_{k\to\infty}\gamma_{n_k}(\tau)$ must also lie in the closed set $N_R(c)$.   
To show the second inclusion in \eqref{E:proof shadow surjective 2}: Fix $\tau\in\R$.   We will show $c(\tau)\in N_R(\gamma)$.   
For $n\in\N$ sufficiently large $\tau\in[-n,n]$, so by \eqref{E:proof shadow surjective 0} there exists $\tau_n\in[s_n,t_n]$ so that 
\[
					\gamma_n(\tau_n)\in \overline{B}_{R}(c(\tau)).
\]
In particular the sequence $\gamma_n(\tau_n)$ is bounded.  But $\gamma_n$ is an $(A,a)$-quasi-geodesic shows that $|\tau_n|$ is bounded in $n$, so 
there exists a convergent subsequence $\tau_{n_j}\to\tau_*\in\R$.   Without loss of generality this is a subsequence of the subsequence $n_k$ 
chosen above, for which $\gamma_{n_k}(0)$ converges to $z_*$.    Thus, $\gamma(\tau_*)=\lim_{j\to\infty}\gamma_{n_j}(\tau_*)=\lim_{j\to\infty}\gamma_{n_j}(\tau_{n_j})$ 
where the final equality is proven using the $(A,a)$-quasi-geodesic property for each $\gamma_{n_j}$.   It follows that $\gamma(\tau_*)$ lies in the 
closed set $\overline{B}_{R}(c(\tau))$, and therefore $c(\tau)\in N_R(\gamma)$ as required.  This shows the second inclusion in \eqref{E:proof shadow surjective 2} and thus that the shadowing map is surjective.  
\end{proof} 

\begin{remark}\label{R:f equivariant}
The shadowing map $\tilde{f}$ is equivariant under deck transformations, and therefore descends to a map 
$f$ from (equivalence classes of) Reeb trajectories in $S$ to (equivalence classes of) geodesics in $M$.   
To see this it is conceptually and notationally simpler to consider $\tilde{f}$ as a map to trajectories of the co-geodesic flow of $\tilde{g}$ in the cotangent bundle (with say fixed energy $1/2$) rather than to the tangent bundle.   This way both $[\tilde{\gamma}]$ and $\tilde{f}([\tilde{\gamma}])$ can be considered as objects in the same space.   This is convenient for two reasons; firstly it means that the equivariance we wish to show is with respect to the deck transformation group for the covering $T^*\Mtilde\to T^*M$ (without having to distinguish this from $T\Mtilde\to TM$); secondly it allows to characterize $\tilde{f}([\tilde{\gamma}])$ simply by the condition 
\begin{equation}\label{D:f}
            d_{H}\big([\tilde{\gamma}],\tilde{f}([\tilde{\gamma}])\big)<\infty
\end{equation}
without projecting down to $\Mtilde$, slightly abusing notation by identifying $[\tilde{\gamma}]$ with the image of $\tilde{\gamma}$.  With this caveat, the 
equivariance we wish to show is simply that 
\begin{equation}\label{E:f equivariant}
        \tilde{f}\circ\Delta=\Delta\circ\tilde{f}  
\end{equation}
for each deck transformation $\Delta$ for the covering $\pi:T^*\Mtilde\to T^*M$ - writing $\Delta$ also for its action on equivalence classes, i.e. $\Delta([\tilde{\gamma}])=[\Delta\circ\tilde{\gamma}]$.    
To prove \eqref{E:f equivariant} we fix a Reeb trajectory $\tilde{\gamma}$ in $\Stilde$ and compare both sides on $[\tilde{\gamma}]$.  By \eqref{D:f} $\tilde{f}([\Delta\circ\tilde{\gamma}])$
is characterized by 
\begin{equation}\label{E: equivariance 100}
            d_{H}\big(\Delta([\tilde{\gamma}]),\tilde{f}([\Delta\circ\tilde{\gamma}])\big)<\infty.  
\end{equation}
On the other hand, $\Delta$ is automatically an isometry with 
respect to the metric on $T^*\Mtilde$ lifted from $T^*M$, and so from \eqref{D:f} we have $d_{H}(\Delta([\tilde{\gamma}]),\Delta(\tilde{f}([\tilde{\gamma}])))<\infty$.  Comparing with \eqref{E: equivariance 100} we see that $\tilde{f}([\Delta\circ\tilde{\gamma}])=\Delta(\tilde{f}([\tilde{\gamma}]))$ by uniqueness.  This proves \eqref{E:f equivariant} at $[\tilde{\gamma}]$ and hence equivariance of $\tilde{f}$.  
\end{remark}

\begin{remark}\label{R:f descends to injective}
Note that the quotient map associated to the shadowing map, see Remark \ref{R:f equivariant}, is surjective respectively injective, when $f$ itself is surjective respectively injective.   The only part of this claim that is not completely immediate is 
that $\tilde{f}$ injective implies $f$ is injective.  However, consider two Reeb trajectories 
in $S$ with distinct images, and let $\tilde{\gamma}_1$ and $\tilde{\gamma}_2$ be any lifts to $\Stilde$.  
Clearly the lifts also have distinct images, i.e.\ $[\tilde{\gamma}_1]\neq[\tilde{\gamma}_2]$.   Since each lift can be considered a lift for the covering space $T^*\Mtilde\to T^*M$, we have (abusing notation somewhat) 
$\Delta([\tilde{\gamma}_1])\neq[\tilde{\gamma}_2]$ for some deck transformation $\Delta$ of $T^*\Mtilde\to T^*M$.   So $\tilde{f}$ injective implies that 
$\tilde{f}\big(\Delta([\tilde{\gamma}_1])\big)\neq \tilde{f}([\tilde{\gamma}_2])$, i.e.\ by \eqref{E:f equivariant} $\Delta\big(\tilde{f}([\tilde{\gamma}_1])\big)\neq \tilde{f}([\tilde{\gamma}_2])$.  That is, $\tilde{f}([\tilde{\gamma}_1])$ and $\tilde{f}([\tilde{\gamma}_2])$ lie in different orbits under the deck group action on $T^*\Mtilde\to T^*M$, and so the curves they project to in $T^*M$ must have 
distinct images, that is, $f([\gamma_1])\neq f([\gamma_2])$.   Thus $f$ is also injective.  
\end{remark}

\section{Proof of main result}
Now we can prove the main result of this article, stated as Theorem \ref{thm:main 1 intro} in the introduction.   Here is the full statement.   Note that $S_{g}M\subset TM$ refers to the unit sphere bundle over $M$ 
with respect to the metric $g$: 

\begin{theorem}\label{T:main}
Suppose $M$ is a closed smooth manifold that supports a Riemannian metric $g$ of negative sectional curvature, and let $S\subset T^*M$ be a fiberwise star-shaped hypersurface.  Suppose that the Reeb flow on $S$ is coarsely minimal.    Then there is an orbital semi-conjugacy between the Reeb flow 
$\phi^t_S:S\to S$ and the geodesic flow $\phi^t_g:S_{g}M\to S_{g}M$.   That is, there exists a continuous surjective map 
\[
		F:S\to S_{g}M
\] 
and a continuous $T:\R\times S\to\R$, with $t\mapsto T(t,z)$ strictly monotonic increasing for each $z\in S$,  so that 
\begin{equation}\label{E:orbital conjugacy}
				\quad\qquad \phi^{T(t,z)}_{g}\big(F(z)\big)=F\big(\phi^{t}_S(z)\big)\qquad \forall z\in S.
\end{equation}
Moreover, $F$ is a homeomorphism if and only if the Reeb flow $\phi_S$ is divergent.  
\end{theorem}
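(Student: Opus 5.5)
We follow Gromov's construction and upgrade the shadowing map $\tilde f$ of Section \ref{S:shadow} to a map of points. The plan is to build $\tilde F:\Stilde\to S_{\tilde g}\Mtilde$, check that it is equivariant under deck transformations, and let it descend to $F$. Fix $z\in\Stilde$, let $\tilde\gamma_z$ be the Reeb trajectory with $\tilde\gamma_z(0)=z$, and let $c_z$ be a unit speed geodesic representing $\tilde f([\tilde\gamma_z])$. Orient $c_z$ so that $c_z(+\infty)=\gamma_z(+\infty)\in\partial\Mtilde$. This makes sense because $\gamma_z$ is an $(A,a)$-quasi-geodesic by Theorem \ref{T:minimal implies QG}, so it has well-defined endpoints at infinity, and these coincide with those of $c_z$ by Morse's Lemma \ref{T:Morse}. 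We then define
\[
\tilde F(z):=\big(P_{c_z}(\pi(z)),\dot c_z\big)\in S_{\tilde g}\Mtilde,
\]
the unit tangent vector of $c_z$ at the orthogonal projection of $\pi(z)$. Equivariance follows exactly as in Remark \ref{R:f equivariant}, since deck transformations are isometries commuting with $P_c$.

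\textbf{Continuity.} This is the main step. Let $z_k\to z$. The curves $\gamma_{z_k}$ converge to $\gamma_z$ in $C^\infty_{\mathrm{loc}}$. Each is an $(A,a)$-quasi-geodesic with uniform constants, so each lies within $R$ of $c_{z_k}$. For large $T_0$ and large $k$, the points $\gamma_{z_k}(\pm T_0)$ lie close to $\gamma_z(\pm T_0)$. A uniform quasi-geodesic estimate shows that the endpoints at infinity $\gamma_{z_k}(\pm\infty)$ lie in small neighbourhoods of $\gamma_z(\pm\infty)$, with neighbourhoods shrinking as $T_0\to\infty$. Here I plan to use that the Gromov products of points at distance $\gtrsim T_0/A$ along a uniform quasi-geodesic are large. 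Hence $c_{z_k}(\pm\infty)\to c_z(\pm\infty)$. Lemma \ref{L:basic convergence 1} then gives $c_{z_k}$ converging to $c_z$ in $C^\infty_{\mathrm{loc}}$ after reparametrization. Combined with $\pi(z_k)\to\pi(z)$ and continuity of the projection in both the point and the geodesic, this yields $\tilde F(z_k)\to\tilde F(z)$. I expect the uniform control of endpoints at infinity under local convergence to be the main technical obstacle.

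\textbf{Monotonicity and the time change.} Along a single trajectory $c_z$ is fixed, so $t\mapsto \tilde F(\phi^t z)$ moves along $c_z$ via $\sigma_z(t):=c_z^{-1}P_{c_z}(\gamma_z(t))$. This map is continuous and proper, with $\sigma_z(t)\to\pm\infty$ as $t\to\pm\infty$, but it need not be monotone. Following Gromov, I would replace it by an averaged version: precompose $\tilde F$ by averaging the projection parameter over a window, $\bar\sigma_z(t)=\frac1{L}\int_t^{t+L}\sigma_z(\tau)\,d\tau$. This gives $\bar\sigma_z(t+s)-\bar\sigma_z(t)=\frac1L\int_t^{t+s}(\sigma_z(\tau+L)-\sigma_z(\tau))\,d\tau$. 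The quasi-geodesic property together with $d_H\le R$ gives $\sigma_z(\tau+L)-\sigma_z(\tau)\ge L/A-a-2R>0$ for $L$ large, uniformly. So $\bar\sigma_z$ is strictly increasing, and it is continuous in $z$ because $\sigma_z$ is. We then redefine $\tilde F(z):=(c_z(\bar\sigma_z(0)),\dot c_z(\bar\sigma_z(0)))$ and set $T(t,z):=\bar\sigma_z(t)-\bar\sigma_z(0)$. With these definitions \eqref{E:orbital conjugacy} holds by construction, equivariance is preserved, and $F$ descends.

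\textbf{Surjectivity and the homeomorphism criterion.} Surjectivity comes from Proposition \ref{P:f onto}: every geodesic shadows some trajectory $\tilde\gamma$, and $t\mapsto\bar\sigma(t)$ is a continuous increasing bijection onto $\R$, so every point of $c$ is hit. If the flow is divergent, $\tilde f$ is injective. Then $F(z)=F(z')$ forces $z,z'$ onto the same trajectory, and strict monotonicity forces $z=z'$. So $F$ is a continuous bijection of compact Hausdorff spaces, hence a homeomorphism. Conversely, if $F$ is a homeomorphism then $f$ is injective, so by Remark \ref{R:f descends to injective} and Proposition \ref{P:f onto} the flow is divergent.
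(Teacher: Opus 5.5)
Your proposal is correct and is essentially the paper's proof. Your first $\tilde F$ is the map $G$ of Step~1, with the same continuity argument via Lemma~\ref{L:basic convergence 1}; your $\sigma_z(t)$ is the cocycle $\mathcal{T}(t,z)$; and your window average $\bar\sigma_z$ with $T(t,z)=\bar\sigma_z(t)-\bar\sigma_z(0)$ is exactly the Gromov averaging $r(z)=\frac1\tau\int_0^\tau\mathcal{T}(t,z)\,dt$, $m(t,z)=\frac1\tau\int_t^{t+\tau}\mathcal{T}(s,z)\,ds$ of Steps~4--5.

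One detail is missing, and the paper omits it too: the quasi-geodesic estimate only bounds $|\sigma_z(\tau+L)-\sigma_z(\tau)|$ from below. The positive sign needs continuity together with $\sigma_z(t)\to+\infty$ as $t\to+\infty$. If $\sigma_z(\tau+L)\le\sigma_z(\tau)$, the intermediate value theorem gives $s\ge\tau+L$ with $\sigma_z(s)=\sigma_z(\tau)$, which contradicts the lower bound once $L>A(a+2R)$.
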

\begin{proof}
Both footprint projection maps $S\to M$ and $\Stilde\to\Mtilde$ will be denoted by $\pi$.  In each case it should be clear from the context which is meant.  

  We will construct a continuous map $\tilde{F}:\Stilde\to S^1_{\tilde{g}}\Mtilde$ which will be equivariant with respect to deck transformations, and injective or surjective when the shadowing map $\tilde{f}$ is injective respectively surjective.   Hence, it will descend to a continuous map $F:S\to S^1_gM$. 
  We break the proof into 7 steps; the idea of using the additive co-cycle property of $\T$ in Step 4 is from 
  Gromov \cite{gromov2000three}, see also Knieper's more detailed presentation \cite{knieper2002hyperbolic}.    

\bigskip

\textsc{Step 1:} In this step we define a map $G:\Stilde\to S_{\tilde{g}}\Mtilde$ that has many of the properties we desire in $\tilde{F}$. 

\medskip

    Let $z\in\tilde{S}$, and $\tilde{\gamma}_{z}:\R\to\Stilde$ be the Reeb trajectory with initial condition $z$.   Denote by 
    $\gamma_z:=\pi\circ\tilde{\gamma}_z:\R\to\Mtilde$ the projection down to $\Mtilde$.   
    By assumption the Reeb flow is $T$-coarsely minimal for some $T\geq 0$, and so by Theorem \ref{T:minimal implies QG} 
    there are constants $A\geq 1, a\geq 0$ depending only on $g, T, S$, so that $\gamma_z$ is an $(A,a)$-quasi-geodesic.   
    By Morse's Lemma there is a unique (up to orientation preserving reparameterization) unit speed geodesic $c=c_z:\R\to\Mtilde$ that has finite Hausdorff distance from $\gamma_z$.   
    In terms of the shadowing map
    \[
    		\tilde{f}\big([\tilde{\gamma}_z]\big)=[\tilde{c}_z]  
    \]
    where $\tilde{c}_z=(c_z,\dot{c}_z)$ denotes the geodesic trajectory in $S_{\tilde{g}}\Mtilde$ that projects down to $c_z$.      
    We moreover choose $c_z$ to be the unique unit speed parameterization with $c_z(0)=P_{c}(\pi(z))$, that is, with initial condition the closest point 
    on $c(\R)$ to the footprint of $z$.   Now we define the map 
    \[
    			G:\Stilde\to S_{\tilde{g}}\Mtilde,\qquad z\mapsto (c_{z}(0),\dot{c}_{z}(0)).
    \]
    \medskip 
    
    \textsc{Step 2:} In this step we show that $G$ is continuous, that it maps the image of the Reeb trajectory $\tilde{\gamma}$ into the image of the geodesic trajectory $\tilde{f}([\tilde{\gamma}])$ shadowing it, and that $G$ is surjective.   

    \medskip 
  
    Let us first see why $G$ is continuous at a point $\tilde{z}_0\in\Stilde$. Let $R$ be the constant given by the Morse Lemma associated to the curvature of $g$ and the constants $A,a$.   Let $\tilde{\gamma}_0:\R\to\Stilde$ be the Reeb trajectory with initial condition $\tilde{\gamma}_0(0)=\tilde{z}_0$, and denote the projected curve by 
    $\gamma_0:=\pi\circ\tilde{\gamma}_0:\R\to\Mtilde$.   So $\gamma_0$ is an $(A,a)$-quasi-geodesic with initial condition 
    $\gamma_0(0)=z_0:=\pi(\tilde{z}_0)$.  So there exist distinct points at infinity $x,y\in\partial\Mtilde$ so that 
    \[
    \lim_{t\to-\infty}\gamma_0(t)=x, \qquad \lim_{t\to\infty}\gamma_0(t)=y.
    \]
    By Morse's Lemma there exists a shadowing geodesic $c_0:\R\to\Mtilde$, meaning $d_{H}(c_0(\R),\gamma_0(\R))\leq R$.  
    The image of $c_0$ is unique, and we choose $c_0$ to denote the unique unit speed parameterization with initial condition the closest point to $z_0$, that is, $c_0(0)=P_{c_0}(z_0)$, and orientation aligned with $\gamma_0$, that is $\lim_{t\to-\infty}c_0(t)=x$, and $\lim_{t\to\infty}c_0(t)=y$.  By definition $G(\tilde{z}_0)=(c_{0}(0),\dot{c}_{0}(0))$.  
    
    Consider a sequence $\tilde{z}_k\in\Stilde$ converging to $\tilde{z}_0$.    Let $z_k=\pi(\tilde{z}_k)\in\Mtilde$ be the projected sequence, 
    so $z_k\to z_0$, let $\tilde{\gamma}_k:\R\to\Stilde$ be the Reeb trajectory with initial condition $\tilde{z}_k$, and $\gamma_k:=\pi\circ\tilde{\gamma}_k:\R\to\Mtilde$ be the projected curve.   Again, each 
    $\gamma_k$ is an $(A,a)$-quasi-geodesic, and $\gamma_k\to\gamma_0$ converges 
    uniformly on compact subsets, that is, in $C^{\infty}_{\mathrm{loc}}(\R,\Mtilde)$.   Thus we find sequences $s'_k\to-\infty$, $t'_k\to\infty$, so that 
    \begin{equation}\label{E:x_ and y_k}
                    x_k:=\gamma_k(s'_k)\to x,\qquad y_k:=\gamma_k(t'_k)\to y
    \end{equation}
    as $k\to\infty$.    By Morse, each $\gamma_k$ has a shadowing geodesic $c_k:\R\to\Mtilde$, meaning $d_{H}(c_k(\R),\gamma_k(\R))\leq R$.  We choose a unit speed parameterization so that $c_k(0)=P_{c_k}(z_0)$.   From the shadowing property there exist $s_k, t_k\in\R$ so that 
    $d(c_k(s_k),x_k)\leq R$ and $d(c_k(t_k),y_k)\leq R$.   Thus, by \eqref{E:x_ and y_k} and the topology on $\overline{M}$, we have $c_k(s_k)\to x$ and $c_k(t_k)\to y$
    as $k\to\infty$.   Thus by Lemma \ref{L:basic convergence 1} 
    \begin{equation}\label{E:convergence alpha_k}
        c_k(\cdot\,+r_k)\to c_0
    \end{equation}
    in $C^{\infty}_{\mathrm{loc}}(\R,\Mtilde)$ for a suitable sequence of shifts $r_k\in\R$.   It follows, using also that each $c_k$ is a minimal geodesic, that the closest point on $c_k(\R)$ to $z_0$ converges as a sequence to the closest point on $c(\R)$ to $z_0$, that is $P_{c_{k}}(z_0)\to P_{c_0}(z_0)$.   In other words 
    $c_k(0)\to c_0(0)$.    On the other hand, from \eqref{E:convergence alpha_k}, we also have $c_k(c_k)\to c_0(0)$.   Thus 
    $|c_k|=d(c_k(c_k),c_k(0))\to 0$, and so \eqref{E:convergence alpha_k} implies $c_k\to c_0$ in $C^{\infty}_{\mathrm{loc}}(\R,\Mtilde)$, and thus $G(\tilde{z}_k)\to G(\tilde{z}_0)$.  This proves that $G$ is continuous. 
    
   From the definition of $G$, it maps each point on a Reeb trajectory $\tilde{\gamma}$ to a point of the form $\tilde{c}(t)=(c(t),\dot{c}(t))$ where 
   $[\tilde{c}]=f([\tilde{\gamma}])$.   In other words, $G$ maps $\tilde{\gamma}$ into the trajectory $\tilde{c}$ of the geodesic flow on 
   $S_{\tilde{g}}\Mtilde$ determined by the shadowing map.    We claim that $G$ maps $\tilde{\gamma}$ moreover surjectively onto $\tilde{c}$.   To see this, fix a parameterization $\tilde{\gamma}:\R\to\Stilde$, and consider $G(\tilde{\gamma}(t))$ for $|t|$ large.  
   Since $\gamma:\R\to\Mtilde$ is a quasi-geodesic we know that $\gamma(n)$ and $\gamma(-n)$ leave any given compact set if $n$ is 
   sufficiently large, and therefore so do the nearest points $P_{c}(\gamma(n))$ and $P_{c}(\gamma(-n))$ in $c(\R)$.  
   By continuity of $c^{-1}\circ P_{c}\circ\gamma:[-n,n]\to\R$ we see that the intermediate points $c(\R)$ are also covered, so $G$ maps onto the image of $\tilde{c}$.    Therefore by Proposition \ref{P:f onto}, 
    since $\tilde{f}$ is surjective, so is $G$ surjective.  

    \bigskip
    
    \textsc{Step 3:} In this step we show that $G$ is equivariant with respect to deck transformations.  

    \medskip

      The equivariance we mean is that if $\Delta$ is any deck transformation for the covering $T^*\Mtilde\to T^*M$ then 
     \begin{equation}\label{E:equivariance G}
      				\Delta\circ G=G\circ\Delta.  
      \end{equation}
      To show this, we compare both sides at a point $\tilde{\gamma}(0)$ for an arbitrary Reeb trajectory $\tilde{\gamma}:\R\to\Stilde$.   By definition of $G$, the defining characteristics of $G\big(\tilde{\gamma}(0)\big)$ are: First, 
      that it lies on the trajectory $\tilde{c}$ representing the shadowing trajectory $\tilde{f}([\tilde{\gamma}])$; secondly that on $\tilde{c}$ it projects down to $\Mtilde$ to the point that is closest to $\pi(\tilde{\gamma}(0))$, where $\pi:T^*\Mtilde\to\Mtilde$.   Applying $\Delta$ to these two conditions we see that $\Delta\big(G(\tilde{\gamma}(0))\big)$ lies on the trajectory $\Delta\circ\tilde{c}$ representing $\Delta\big(\tilde{f}([\tilde{\gamma}])\big)$, which by equivariance of $\tilde{f}$ - see Remark \ref{R:f equivariant} - is represented by $\tilde{f}([\Delta\circ\tilde{\gamma}])$, and moreover that within this curve the point $\Delta\big(G(\tilde{\gamma}(0))\big)$ projects down to $\tilde{M}$ to the point that is closest to $\pi(\Delta\circ\tilde{\gamma}(0))$ because $\pi\circ\Delta$ will be a deck transformation for the covering $\Mtilde\to M$, and hence an isometry of $\Mtilde$ as the metric is lifted from $M$.   In other words, $\Delta\big(G(\tilde{\gamma}(0))\big)$ satisfies the two defining characteristics of $G\big((\Delta\circ\tilde{\gamma})(0)\big)$.    We conclude that $\Delta\big(G(\tilde{\gamma}(0))\big)=G\big((\Delta\circ\tilde{\gamma})(0)\big)$.   This proves \eqref{E:equivariance G} at $\tilde{\gamma}(0)$ and hence in general.  

    \bigskip
    
    \textsc{Step 4:} In this step we define $\tilde{F}$ and show that it maps injectively along each Reeb trajectory.   

    \medskip 
    
    The problem with $G$ is that it is not necessarily injective, even along each Reeb trajectory.    We therefore modify $G$ along each Reeb trajectory, that is, to a map $\tilde{F}:\Stilde\to S_{g}\Mtilde$ of the form
    \begin{equation}\label{E:def of Ftilde} 
    							\tilde{F}(z)=\phi^{r(z)}_{\tilde{g}}\big(G(z)\big)\qquad \forall z\in\Stilde
    \end{equation}
    for some continuous function 
    \[
    						r:\Stilde\to\R.
    \]
    Our goal now is to choose $r$ appropriately, so that $\tilde{F}$ is injective on each Reeb trajectory (and also so that $\tilde{F}$ 
    is equivariant with respect to deck transformations, see Step 6).        
 Since $G$ maps Reeb trajectories to $\tilde{g}$-geodesic trajectories, there is a map 
   $\T:\R\times \Stilde\to \R$ satisfying 
   \begin{equation}\label{E:Def of T}
   		G\left(\phi_{\Stilde}^t(z)\right)=\phi^{\T(t,z)}_{\tilde{g}}\big(G(z)\big)\qquad \forall t\in\R,\ z\in\Stilde.
   \end{equation}
   Note that as $g$ has negative curvature there are no contractible closed geodesics on $M$ and therefore no closed trajectories of the geodesic flow on $S_{\tilde{g}}\Mtilde$, which means that $\T$ is uniquely characterized by \eqref{E:Def of T}.  In particular, $\T(0,\cdot)\equiv0$.  The continuity of $G$ and of the flows $\phi_{\Stilde}^t$, $\phi_{\tilde{g}}^t$ implies that $\T$ is also continuous.  Moreover, 
   $\T$ is an `additive co-cycle', that is 
   \begin{equation}\label{E:cocycle}
   		\T(s+t,z)=\T(s,{\phi}_{\tilde{S}}^t(z))+\T(t,z)\qquad \forall s,t\in\R,\ \forall z\in\Stilde. 
   \end{equation} 
   Indeed, this follows by applying $G$ to the relation 
   $\phi_{\tilde{g}}^{s+t}(z)=\phi_{\tilde{g}}^{s}\big(\phi_{\tilde{g}}^{t}(z)\big)$ and inserting this into \eqref{E:Def of T} and using uniqueness of $\T$.   Now fix $z\in\Stilde$ arbitrarily.    To show that $\tilde{F}$ is injective along the Reeb trajectory through $z$ means to show that $t\mapsto\tilde{F}(\phi_{\Stilde}^{t}(z))$ is injective.  From \eqref{E:def of Ftilde} and \eqref{E:Def of T} 
   \[
   		\tilde{F}(\phi_{\Stilde}^{t}(z))=\phi^{r(\phi_{\Stilde}^{t}(z))}_{\tilde{g}}\big(G\big(\phi_{\Stilde}^{t}(z)\big)=
						\phi^{r(\phi_{\Stilde}^{t}(z))}_{\tilde{g}}\Big(\phi^{\T(t,z)}_{\tilde{g}}(G(z))\Big)
   \]
   for all $t\in\R$, so that 
   \[
   		\tilde{F}(\phi_{\Stilde}^{t}(z))=\phi^{m(t,z)}_{\tilde{g}}\big(G(z)\big)
   \]
   where 
   \begin{equation}\label{E:def of m}
   		m(t,z):=r(\phi_{\Stilde}^{t}(z)) + \T(t,z). 
   \end{equation}
   Thus $\tilde{F}$ will be injective along each Reeb trajectory if we can choose the function $r$ so that for each $z\in\Stilde$ the function $t\mapsto m(t,z)$ is injective.   
   To do this we model Gromov's approach from the purely Riemannian situation in \cite{gromov2000three}: Using the quasi-geodesic estimates once more, there exists a universal 
   $\tau>0$ such that $\T(\tau,z)>0$ for all $z\in \tilde{S}$.   We will verify this in the next step below.   Now define $r:\tilde{S}\to \R$ and $\tilde{F}:\tilde{S}\to S_{\tilde{g}}\tilde{M}$ by
   \begin{equation}\label{E:def of r}
   				r(z):=\frac{1}{\tau}\int^\tau_0\T(t,z)\,dt. 
  \end{equation}
   Then we compute 
   \begin{align}
   			m(t,z)&=\frac{1}{\tau}\int^\tau_0 \T(s,\phi_{\Stilde}^{t}(z))\,ds\ +\ \T(t,z)\nonumber\\
				&=\frac{1}{\tau}\int^\tau_0 \T(s,\phi_{\Stilde}^{t}(z)) + \T(t,z)\,ds\nonumber\\
				&=\frac{1}{\tau}\int^\tau_0 \T(s+t,z)\,ds=\frac{1}{\tau}\int^{\tau+t}_{t} \T(s,z)\,ds,\nonumber
   \end{align}
   so by continuity of $\T$ the partial derivative $\partial_tm(t,z)$ exists and
   \begin{equation}\label{E:derivative of m}
   			\partial_tm(t,z)=\frac{1}{\tau}\Big(\T(\tau+t,z)-\T(t,z)\Big)=\frac{1}{\tau}\T(\tau,{\phi}_{\tilde{S}}^t(z))>0  
   \end{equation}
   where the final term is positive by choice of $\tau$.  This strict monotonicity of $m$ implies that $\tilde{F}$ maps each flow line  injectively into a flow line.   
   
   \bigskip
    
    \textsc{Step 5:} In this step we show that there exists $\tau\in\R$ so that $\T(\tau,\cdot)>0$, and also that $\tilde{F}$ is surjective.

    \medskip 

    Let $z\in\Stilde$  Denote by $\tilde{\gamma}:\R\to\Stilde$ the Reeb trajectory with initial condition $\tilde{\gamma}(0)=z$ and let $\tilde{c}:\R\to S_{\tilde{g}}\Mtilde$ be the $\tilde{g}$-geodesic trajectory with initial condition $\tilde{c}(0)=G(z)$.   By \eqref{E:Def of T} $G(\tilde{\gamma}(t))=\tilde{c}(\T(t,z))$ for each $t\in\R$, so, writing $\gamma$ and $c$ for the projected curves in $\Mtilde$, we have $\gamma(t)$ is the closest point on $\gamma(\R)$ to $c(\T(t,z))$, and hence a distance at most $R$.   Thus  
    \begin{align}
        \T(t,z) = d_{\tilde{g}}\big(c(\T(t,z)),c(0)\big)  
                \geq d_{\tilde{g}}\big(\gamma(t),\gamma(0)\big) - 2R 
                \geq\frac{1}{A}|t|-a - 2R\nonumber
    \end{align}
   because $\gamma$ is an $(A,a)$-quasi-geodesic.    So there exists $\tau>0$ sufficiently large, independent of $z$, so that $\T(t,z)/t\geq 1/(2A)$ whenever $t\geq \tau$.    In particular $\T(\tau,\cdot)$ is uniformly bounded from below by a positive value.  It also follows that if we use this $\tau$ to define $r$ in \eqref{E:def of r}, then by \eqref{E:derivative of m} the resulting function $m$ will satisfy $\partial_tm(t,z)\geq 1/(2A)$ and so $\tilde{F}$ maps the trajectory $\tilde{\gamma}$ surjectively onto $\tilde{c}$.    It follows that $\tilde{F}$ is surjective, since by Proposition \ref{P:f onto} the shadowing map is surjective.  

    \bigskip
    
    \textsc{Step 6:} In this step we show that $\tilde{F}$ is equivariant with respect to deck transformations, so that it descends to a 
      continuous map $F:S\to S_{g}M$.   

\medskip

      Let $\Delta$ be a deck transformation for the covering $T^*\Mtilde\to T^*M$.  Then we wish to show that 
      \begin{equation}\label{E:equivariance F}
      				\Delta\circ\tilde{F}=\tilde{F}\circ\Delta.   
      \end{equation}
      From the representation for $\tilde{F}$ in \eqref{E:def of Ftilde} and the equivariance for $G$ proven above, it suffices to check that $r(z)=r(\Delta(z))$ for each $z\in\Stilde$.   From \eqref{E:def of r} this follows if $\T$ is invariant in the sense that $\T(t,\Delta(z))=\T(t,z)$ for all $(t,z)\in\R\times\Stilde$, which in turn follows from \eqref{E:Def of T} using the equivariance of $G$.  
      
      Note similarly that from \eqref{E:def of m} $m$ is invariant 
     so that the right hand side of \eqref{E:T descends} is invariant as claimed earlier.  
   
    \bigskip
    
    \textsc{Step 7:} In this step we verify that the quotient map $F$ has all the desired properties.  

    \medskip 
    
    By Step 6 we have that $\tilde{F}$ descends to a continuous map $F:S\to S_gM$, and that it is surjective because $\tilde{F}$ is surjective.   
    We also see that $F$ maps Reeb orbits to geodesics and that \eqref{E:orbital conjugacy} holds where $T$ is 
    defined by 
  \begin{equation}\label{E:T descends}
   T(t,\pi(z)):=m(t,z)-r(z) \qquad \forall (t,z)\in\R\times\Stilde.  
 \end{equation}
 Note that the right hand side is invariant under deck transformations, see end of Step 6, so that this relation 
 makes $T$ well defined.     
 It also follows that for each fixed $w=\pi(z)\in S$ the function $t\mapsto T(t,w)$ is differentiable with derivative 
 $\partial_tT(t,\pi(z))=\frac{1}{\tau}\T(\tau,{\phi}_{\tilde{S}}^t(z))>0$, hence $T$ is strictly monotonic increasing in the $t$ variable as claimed in the theorem, and therefore $F$ is 
 injective along each Reeb trajectory.      Now if the Reeb flow is divergent then the shadowing map, and hence $\tilde{F}$, maps distinct Reeb trajectories to distinct geodesic trajectories, and therefore by Step 4 $\tilde{F}$ is injective also.   From the equivariance for $\tilde{F}$ it follows that $F$ is also injective, arguing analogously to in Remark \ref{R:f descends to injective}.    So if the Reeb flow is divergent then $F$ is a continuous bijection between compact and Hausdorff spaces, so it is a homeomorphism.   
 Conversely, if $F$ is a bijection then it maps distinct Reeb trajectories to distinct geodesic trajectories, and therefore the same holds for $\tilde{F}$, and so the shadowing map $\tilde{f}$
 is also injective, which as observed in the proof of Proposition \ref{P:f onto} is equivalent to the Reeb flow being divergent.   This completes the proof of Theorem \ref{T:main}.  
\end{proof}

\section{An example with non-convex fibers}\label{S:example}
In this section we prove Theorem \ref{thm:example intro} from the introduction, which shows that the assumptions in our main result do not force the fibers of the star-shaped hypersurface $S\subset T^*M$ to be convex.   We recall the statement: 

\begin{theorem}\label{thm:nonconvexhypersurface}
    Let $M$ be a closed smooth manifold which supports a Riemannian metric $g$ of negative sectional curvature. Then there exists a star-shaped hypersurface $S\subset T^*M$ which is coarsely minimal and has the divergence property, which is moreover not fiber-wise convex. That is, there are fibers $S_x$ that are strictly not convex.
\end{theorem}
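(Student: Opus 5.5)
Start from the unit co-sphere bundle $S_0=S^*_gM=\{G_0=1/2\}$, $G_0(q,p)=\frac12\|p\|^2_{g_*}$. Its Reeb flow is the geodesic flow of $g$. That flow is $0$-coarsely minimal, since every geodesic of a negatively curved metric is minimal in $\Mtilde$. It is also divergent, since distinct geodesics in $\Mtilde$ have infinite Hausdorff distance. I will perturb $S_0$ inside a small neighbourhood $U\times(\text{fiber directions})$ of a single point $q_*\in M$. The perturbation should make $S_{q_*}$ non-convex while keeping $S$ star-shaped. Concretely, write $S=\{p=\rho(q,p/|p|)\,p/|p|\}$ with a radial function $\rho=1+\epsilon\chi(q)\psi(v)$, where $\chi$ is a bump supported in a small ball $B_\delta(q_*)$ and $\psi$ is a function on the unit sphere that creates a dent. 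For example, take $\psi$ nonnegative and supported near a direction $v_0$, with a local minimum of $\rho$ at $v_0$ inside a ring of larger values. This makes $\{\rho(q_*,\cdot)\}$ non-convex near $v_0$ even for small $\epsilon$, provided the $C^2$-size of $\psi$ is large relative to $\epsilon$. Star-shapedness is automatic for any positive radial function.

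The main step is to show the perturbed Reeb flow is $T$-coarsely minimal for some $T$ and divergent. I plan to exploit the fact that the perturbation is localized in $B_\delta(q_*)$ and that orbits pass quickly through it. Outside $\pi^{-1}(B_\delta(q_*))$ the Reeb flow coincides with the geodesic flow of $g$. Every Reeb trajectory is therefore a concatenation of $g$-geodesic segments, separated by short excursions through the ball. Each excursion lasts a time bounded by some $\tau_0$, using that $\epsilon$ is small and that Reeb trajectories of $S$ cannot stay in $\pi^{-1}(B_\delta)$, which contains no invariant set if $\delta$ is small. For coarse minimality I compare actions via Proposition \ref{L:existence of chord with bounds} and the relation \eqref{E:action H and Reeb}. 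Action-minimizing Reeb chords between fibers in a given class exist; one gets them by taking a minimizing sequence, using compactness of fibers, and arguing as in Remark \ref{R:existence of chords}. Their action differs from the $g$-distance $d_{\tilde g}(\tilde q_0,\tilde q_1)$ by at most $O(\epsilon)$ times the number of passages through lifts of $B_\delta$. The number of passages is $\lesssim d_{\tilde g}(\tilde q_0,\tilde q_1)$. Choosing $s',t'$ so that the footprints $\pi\gamma(s'),\pi\gamma(t')$ lie outside $\pi^{-1}(B_\delta)$, and moving the endpoints by at most $T\approx\tau_0+$ const, I would argue that $\gamma|_{[s',t']}$ is itself the minimizer. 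The reason is that a minimizer in that class is $C^0$-close to the unique $g$-geodesic, and negative curvature (hyperbolicity of the geodesic flow, structural stability under $C^1$-small perturbation of the vector field) forces it to coincide with the Reeb orbit shadowing that geodesic.

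Here is the cleaner route I would actually try. For $\epsilon$ small in $C^1$ (but with $\psi$ of large $C^2$ norm relative to $\epsilon$), the Reeb flow on $S$ is a $C^1$-small perturbation of the Anosov geodesic flow on $S_0$, and hence is Anosov. By Anosov structural stability it is orbit equivalent to $\phi_g$ via a map $C^0$-close to the identity. This gives divergence immediately: lifted orbits stay within bounded distance of distinct geodesics, which diverge. It also shows that Reeb trajectories project to uniform quasi-geodesics. It then remains to upgrade this to coarse minimality. Because the Reeb flow is Anosov with no conjugate-point-type degeneracy, the chord from $S_{q_0}$ to $S_{q_1}$ in a given class in $\Stilde$ is unique for generic endpoints. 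Uniqueness follows from transversality of $\phi^t(S_{q_0})$ to vertical fibers; this transversality holds because the perturbed stable and unstable bundles stay close to those of $g$, which are transverse to the vertical. The unique chord is then automatically the minimizer, which even gives $0$-minimality generically.

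The main obstacle is the tension in this last step. A non-convex fiber means the vertical Lagrangian is \emph{not} transverse to the flowed fiber for small times, which is exactly what kills $0$-minimality (Proposition \ref{propn:minimal implies convex intro}). So the perturbation must be $C^1$-small yet $C^2$-large, and transversality can only hold for chords of length $\geq T$. I would handle this by proving that $D\phi^t(T^v)$ is transverse to $T^v$ for all $t\geq T$, using the cone-field invariance of the perturbed Anosov splitting. This would show that all chords of length $\geq T$ between lifted fibers are unique in their class, hence minimizing, which yields $T$-coarse minimality; the proof would be completed by a limiting argument for degenerate endpoints as in Proposition \ref{L:existence of chord with bounds}.
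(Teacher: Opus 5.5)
Your construction of a dented, star-shaped $S$ is fine. The argument for coarse minimality, however, has two genuine gaps.

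\textbf{The perturbation is not $C^1$-small as a flow.} You claim that for $\rho-1$ small in $C^1$ the Reeb flow is a $C^1$-small perturbation of the geodesic flow, and hence Anosov, structurally stable, with invariant perturbed cone fields. At the level of vector fields this is false. Transported to $S_0$ by radial projection, the Reeb field of $S$ is the Reeb field of $\rho\lambda_0$. That field involves $\rho$ and $d\rho$, so its $C^1$-distance to the geodesic spray is governed by $\|\rho-1\|_{C^2}$. Non-convexity of $S_{q_*}$ forces this quantity to be of order one. Equivalently, $\partial_p^2H$ must degenerate, which is exactly the small-time conjugate point you point out yourself. So your Reeb field is only $C^0$-close to the geodesic spray, and none of $C^1$-openness of the Anosov property, structural stability, or cone-field invariance can be quoted. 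Something might be recovered from the fact that orbits cross $\pi^{-1}(B_\delta(q_*))$ in time $O(\delta)$: with a careful balance of $\epsilon$, the dent width and $\delta$, the time-$t$ maps could still be $C^1$-close. But that argument is yours to supply.

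Your first route has the same defect. Saying that a minimizer $C^0$-close to the geodesic must coincide with ``the'' Reeb orbit shadowing it presupposes uniqueness of shadowing orbits, which is divergence. Moreover, $C^0$-closeness cannot separate distinct chords with the same end fibers, and producing such chords is exactly what a dent does (see the Appendix).

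\textbf{Transversality for $t\geq T$ does not give uniqueness.} Even granting transversality of $D\phi^t(T^v)$ to $T^v$ for $t\geq T$, this only makes long chords non-degenerate, not unique in their class.
A Cartan--Hadamard argument would need $p\mapsto\pi\phi^1_H(q_0,p)$ to be a proper local diffeomorphism, hence a covering of $\Mtilde$. That requires transversality for \emph{all} $t>0$ along every orbit leaving $T^*_{q_0}\Mtilde$.
The Floer argument of Proposition \ref{L:existence of chord with bounds} gives only rank-one homology. This does not exclude extra generators cancelling in pairs unless all chords in the class have the same Maslov index.
To close the gap you would need to use the endpoint adjustment to place $\pi\gamma(s')$ and $\pi\gamma(t')$ away from the lifts of $B_\delta(q_*)$. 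You would then need a Riccati-type estimate showing that no conjugate points occur along any orbit leaving such fibers, including its later passages through dents. None of this is in the proposal, so coarse minimality is not established.

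\textbf{The idea you are missing.} The paper avoids hyperbolic perturbation theory altogether by making the new Reeb flow \emph{strictly conjugate} to the geodesic flow via a map that is the identity outside $S^1_gU$.
\begin{itemize}
\item Take a contact isotopy $\psi_t$ of $S^1_gM$ supported in $S^1_gU$, write $(\psi_1^{-1})^*\lambda=f\lambda$, and set $S=\{(x,f(x,p)p)\}$.
\item Then $\phi\circ\psi_1: S^1_gM\to S$ is a strict contactomorphism equal to the identity outside $S^1_gU$.
\item Consequently, Reeb chords of $\Stilde$ between fibers over points outside the lifts of $U$ correspond bijectively, with equal action, to geodesic chords, and those are minimal.
\item Since a geodesic spends bounded time in each lift of the convex set $U$, coarse minimality and divergence follow at once.
\end{itemize}
All the real work then goes into showing that some fiber is non-convex.
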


To emphasize: If the Reeb flow on $S$ is minimal in the stronger sense that each chord has minimal action amongst Reeb chords with the same end fibers and in the same homotopy class, then this can be shown to force the fibers to be convex, see the Appendix or \cite{dePooter_thesis}, which essentially restricts the discussion to Finsler geodesic flows.   

\subsection{Variational theory and convexity}
The following statements can all be found in \cite{fathi2008weak}*{Sections 2 and 3}. Statements on lower regularity can also be found there.

Let $L:TM\to \R$ be a $C^\infty$ map. Choosing a bundle chart with coordinates $(x_i,v_i)_{i=1,\ldots,n=\dim M}$, we define $D^2_VL(x,v)$ to be the Hessian matrix of $L_x:T_xM\to \R$ at $v\in T_xM$. I.e. the matrix consisting of all second-order partial derivatives with respect to the fiber coordinates. We say that $L$ is \textbf{non-degenerate} at $(x,v)$ if $D^2_V(x,v)$ is positive definite. It is readily verified that this property is independent of bundle chart (but not independent of general chart). The map $L$ is said to be \textbf{super-linear} if, for any Riemannian metric $g$ on $M$, 
$$\lim_{\|v\|\to \infty}\|L(x,v)\|/\|v\|=\infty\quad\forall(x,v)\in TM.$$
Here the norms are with respect to $g$. Since any two Riemannian metrics are equivalent as norms on each fiber $T_xM$, this property is independent of chosen metric. 

Replacing $TM$ by $T^*M$ instead, we similarly define the vertical Hessian $D^2_VH(x,p)$ of a smooth map $H:T^*M\to \R$ and say $H$ is non-degenerate at $(x,p)$ when the Hessian is positive definite. Additionally it is super-linear if the above limit exists and is infinite everywhere for any chosen co-metric on $T^*M$. 
\begin{definition}[Tonelli Lagrangian/Hamiltonian]
    Let $L:TM\to \R$ and $H:T^*M\to \R$ be smooth maps. The Lagrangian $L$ and Hamiltonian $H$ are \textit{Tonelli} if they are non-degenerate at every point and super-linear, respectively.\qet
\end{definition}
Tonelli Hamiltonians and Lagrangians are dual to eachother via the Fenchel transform, and their dynamics are conjugated by the Legendre transform. Recall the Fenchel transform $H:=L_F$ of a fiber-wise convex Lagrangian $L$ by
$$L_F(x,p):=\sup_{v\in T_xM}\left(pv-L(x,v)\right).$$
For a fiber-wise convex Hamiltonian $H$ its Fenchel transform $L:=H_F$ is defined similarly, replacing $v\in T_xM$ by $p\in T_x^*M$ and $L$ by $H$. After identifying the double-dual of $T_xM$ with itself, it holds that $(L_F)_F=L$ whenever the supremum is attained in each fiber (e.g. when $L$ is super-linear).

In general, if $L$ or $H$ is continuous, their Fenchel transforms are also continuous, but higher regularity does not imply higher regularity of their transforms. However, if $L$ or $H$ is Tonelli, then their transforms are also Tonelli. Moreover, if either is additionally smooth, their transforms are smooth, too.

The Legendre transform $\mathcal{L}:TM\to T^*M$ of a $C^1$ Lagrangian is defined as
$$(x,v)\mapsto \left(x, \frac{\partial L}{\partial v}(x,v)\right).$$
It is a fibered map, which sends $(x,v)$ to a value $(x,p)$ for which $v$ is a maximizer in the above supremum. The supremum may in general be obtained at multiple points, but if $L$ or $H$ is strictly-convex (i.e. sublevel sets are fiber-wise strictly-convex) it is unique. It can then be shown that $\cal{L}$ is a homeomorphism. If $L$ is additionally Tonelli and smooth, then the Legendre transform is a smooth diffeomorphism too. For a Hamiltonian $H$, the identification of the double-dual of $T_xM$ with itself implies an analogous definition of the Legendre transform $\cal{H}:T^*M\to TM$, and if $\cal{L}$ is bijective, then the Legendre transform of $L_F$ is the inverse of $\cal{L}$.

Another important result of Tonelli Lagrangians is that critical values of the Lagriangian energy functional are integral curves of the Euler-Lagrange vector field. That is, for a Lagrangian $L$ and an absolutely continuous curve $\gamma:[0,1]\to M$, the Lagrangian energy of $\gamma$ is defined as
$$\mathds{L}(\gamma):=\int^1_0L(\gamma(t),\dot{\gamma}(t))dt.$$
All absolutely continuous curves with equal starting and end-points can be given the structure of a Banach manifold, and the Euler-Lagrange Theorem shows that critical points (curves) of the Lagrangian energy solve the Euler-Lagrange equation, so long as $L$ is at least $C^1$. Conversely, solutions of the Euler-Lagrange equation are also critical points. Tonelli's Theorem states: if $L$ is additionally fiber-wise convex and super-linear, then a minimizer does indeed exist. If $L$ is moreover Tonelli and smooth, then there is a globally well-defined smooth vector-field $X_L$ on $TM$, called the Euler-Lagrange vector field, whose integral curves are all solutions of the Euler-Lagrange equation.

Finally, if $L$ is a Tonelli Lagrangian, the Legendre transform intertwines the Euler-Lagrange flow and Hamiltonian flow of its Fenchel transform. The $\mathds{L}$-energy of an absolutely continuous curve and action $\mathds{A}$ of $L_F$ are additionally equal. That is,
$$\phi_H^t\circ \cal{L}=\cal{L}\circ \phi^t_L\quad\mathrm{and}\quad \mathds{L}(\gamma)=\mathds{A}(\mathcal{L}\circ\gamma).$$
Indeed, 
$$\mathds{A}(\mathcal{L}\circ\gamma)=\int^1_0\frac{\partial L}{\partial v}(\gamma(t),\dot{\gamma}(t))(\dot{\gamma}(t))-L_F(\gamma(t),\dot{\gamma}(t))dt=\int^1_0L(\gamma(t),\dot{\gamma}(t))dt=\mathds{L}(\gamma).$$
The same holds for the Legendre-Fenchel transform of a Tonelli Hamiltonian $H$.

By an approximation argument, we find the following corollary (which is not in \cite{fathi2008weak}):
\begin{corollary}\label{cor:fathicor}
    Let $H:T^*M\to \R$ be a smooth function, fiber-wise convex Hamiltonian with compact sub-level sets, where is $M$ a closed smooth manifold. Let $L:TM\to \R$ be its Fenchel transform. Then there is a smooth curve $\gamma:[0,1]\to M$ which is the projection of a Hamiltonian chord with least $\mathds{L}$-energy.
\end{corollary}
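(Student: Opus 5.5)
The plan is to approximate $H$ by Tonelli Hamiltonians, use the Tonelli theory recalled above for each approximant, and pass to the limit. Fix the end points $q_0,q_1$ (and, if wanted, a relative homotopy class $[\alpha]$), since the chord in Corollary~\ref{cor:fathicor} is understood to connect given fibers. Fix a background Riemannian metric $g$ and its kinetic Hamiltonian $G(q,p)=\tfrac12\|p\|^2_{g_*}$. For $\epsilon>0$ set $H_\epsilon:=H+\epsilon G$. The vertical Hessian of $H$ is positive semidefinite by fiberwise convexity, and that of $\epsilon G$ is positive definite, so $H_\epsilon$ is non-degenerate everywhere. Compact sub-level sets together with convexity give at least linear growth of $H$ in the fibers, and adding $\epsilon G$ makes $H_\epsilon$ super-linear. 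Hence $H_\epsilon$ is smooth and Tonelli, and its Fenchel transform $L_\epsilon$ is smooth and Tonelli. Tonelli's theorem then gives an absolutely continuous minimizer $\gamma_\epsilon$ of $\mathds{L}_\epsilon$ among curves from $q_0$ to $q_1$ (in $[\alpha]$). By the Euler--Lagrange theorem $\gamma_\epsilon$ is smooth, and $x_\epsilon:=\mathcal{L}_\epsilon\circ(\gamma_\epsilon,\dot\gamma_\epsilon)$ is an $H_\epsilon$-chord with $\mathds{A}_{H_\epsilon}(x_\epsilon)=\mathds{L}_\epsilon(\gamma_\epsilon)$.

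Next I need bounds uniform in $\epsilon\in(0,1]$. Since $H_\epsilon\ge H$, we get $L_\epsilon\le L$. For a fixed smooth competitor $\beta$ from $q_0$ to $q_1$ (in $[\alpha]$), minimality then gives $\mathds{L}_\epsilon(\gamma_\epsilon)\le \mathds{L}_\epsilon(\beta)\le\mathds{L}(\beta)=:K$. Along the chord $x_\epsilon$ the energy $H_\epsilon$ is conserved, and I want to show it is bounded uniformly in $\epsilon$. The tool is the energy identity $E=\langle p,\dot q\rangle-L_\epsilon$, together with a lower bound $L_\epsilon(q,v)\ge c|v|-C$ that is uniform in $\epsilon\le1$. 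This lower bound holds because $L_\epsilon\ge L_{H+G}$, where $L_{H+G}$ is the Fenchel transform of $H+G$ and is super-linear. Averaging over $[0,1]$ then bounds $\int|\dot\gamma_\epsilon|$, and hence the energy. Therefore $x_\epsilon$ stays in a fixed compact set $\{H+G\le E_0\}$.

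Then I pass to the limit. The vector fields $X_{H_\epsilon}$ converge to $X_H$ in $C^1$ on this compact set. The initial points $x_\epsilon(0)\in T^*_{q_0}M$ lie in a compact set, so a subsequence converges to some $z$. The $H$-trajectory $x$ starting at $z$ is the $C^1$ limit of the $x_\epsilon$, so it is an $H$-chord from $T^*_{q_0}M$ to $T^*_{q_1}M$ (in $[\alpha]$), and its projection $\gamma:=\pi\circ x$ is smooth. For least energy: $\mathds{L}(\gamma)$ equals $\mathds{A}_H(x)=\lim\mathds{A}_{H_\epsilon}(x_\epsilon)$, which in turn equals $\lim\inf_\beta \mathds{L}_\epsilon(\beta)$. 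I then need $\inf_\beta\mathds{L}_\epsilon(\beta)\to\inf_\beta\mathds{L}(\beta)$. One inequality follows from $L_\epsilon\le L$. The other follows from the pointwise monotone convergence $L_\epsilon\uparrow L$ on bounded velocity sets, applied to a near-minimizer of $\mathds{L}$ (or by lower semicontinuity arguments).

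The main obstacle is the identity $\mathds{L}(\gamma)=\mathds{A}_H(x)$ when $H$ is only convex rather than strictly convex. In that case $L$ may fail to be smooth or finite-valued, and the Legendre correspondence is not a diffeomorphism. I would handle this by using the Fenchel inequality $L(q,\dot q)\ge \langle p,\dot q\rangle-H(q,p)$, with equality exactly when $\dot q=\partial_pH$, which holds along any $H$-trajectory. This gives $\mathds{L}(\gamma)=\mathds{A}_H(x)$ directly, without needing $\mathcal{L}$ to be invertible.
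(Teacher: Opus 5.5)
Your proposal is correct and follows the paper's route: perturb to the Tonelli Hamiltonians $H+\epsilon G$, take $\mathds{L}_\epsilon$-minimizing chords, extract a limiting $H$-chord by compactness, and use $L_\epsilon\le L$ to show that the limit has least $\mathds{L}$-energy. The paper phrases this last step as a contradiction with an infimizing sequence, takes the compactness for granted, and obtains $\mathds{L}_{\epsilon_n}(\beta_n)\to\mathds{L}(\beta)$ from continuity of the Fenchel transform, whereas you supply a uniform energy bound and the Fenchel equality along $H$-trajectories. Two small remarks: the inequality $\lim_{\epsilon\to0}\inf_\beta\mathds{L}_\epsilon(\beta)\ge\inf_\beta\mathds{L}(\beta)$ holds automatically because your limit curve $\gamma$ is itself a competitor, and the monotone-convergence argument you cite for it only reproduces the direction already given by $L_\epsilon\le L$; also, turning the length bound into a uniform energy bound needs $H$ to grow faster than linearly in $p$, which it does in the paper's $2$-homogeneous application.
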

\begin{proof}
    Let $l:=\inf \{\mathds{L}(\gamma):\gamma\in C^{ab}([0,1],M)\}\geq 0$ be the infimum of $\mathds{L}$-energy, and let $\gamma_n$ be an infimizing sequence.
    
    Let $g$ be a smooth Riemannian metric on $M$, with $g^*$ the cometric. For any $\epsilon>0$, $H_{\epsilon}:=H+\epsilon g^*$ is a smooth Tonelli Hamiltonian. Let $L_{\epsilon}$ denote the Fenchel transform. Let $\{\epsilon_n>0\}_{n\in \N}$ be sequence converging to zero, and let $\{\alpha_n:[0,1]\to T^*M\}_{n\in \N}$ be a sequence of minimizing integral curves, i.e. their projections $\beta_n=\pi\circ\alpha_n$ are minimizers of $\mathds{L}_{\epsilon_n}$. As sub-level sets of $H$ are compact, we assume without loss of generality that $\alpha_n$ converges to some integral curve $\alpha$ of $X_H$. Denote by $\beta=\pi\circ \alpha$ the projection of the curve. Since the Fenchel map (assigning a Hamiltonian to its dual Lagrangian) is continuous, $\mathds{L}(\beta)=\lim\:\mathds{L}_{\epsilon_n}(\beta_n)$.

    Assume now for contradiction that $\mathds{L}(\beta)>l$. Let $\tilde{n}\in \N$ be large enough so that $\mathds{L}(\gamma_{\tilde{n}})<\mathds{L}(\beta)$. Selecting a possibly larger number $\overline{n}\in \N$, this implies that $\mathds{L}_{\epsilon_{\overline{n}}}(\gamma_{\overline{n}})<\mathds{L}_{\epsilon_{\overline{n}}}(\beta_{\overline{n}})$, a contraction. This finishes the proof.
\end{proof}

\subsection{Construction}
In this section we construct the hypersurface $S$ of Theorem \ref{thm:nonconvexhypersurface}. Let $M$ be a closed smooth manifold which supports a Riemannian metric $g$ of negative curvature. Let $U\neq \varnothing$ be a geodesically convex neighborhood in $M$, i.e. any two points in $U$ are connected by a unique geodesic contained in $U$, which also minimizes length. Let $x,y,z$ be three distinct points in $U$, and let $\alpha_i:[0,T_i]\to U$ $(i=1,2,3)$ be the unit speed geodesics connecting the pairs $(x,y),(y,z),(x,z)$ respectively. We make use of the isotopy extension theorem for isotropic submanifolds \cite{geiges2008contact}*{Theorem 2.6.2}, applied to points:
\begin{corollary}[Corollary 2.6.3 of \cite{geiges2008contact}]
Let $(N,\xi)$ be a connected contact manifold and $p,q\in N$ two (not necessarily distinct) points of $N$. Let $\gamma:[0,1]\to N$ be a smooth path connecting $p=\gamma(0)$ with $q=\gamma(1)$. Then there is a compactly supported contact isotopy $\{\psi_t\}_{t\in[0,1]}$ of $N$ with $\psi_t(p)=\gamma(t)$. In particular there is a contactomorphism $\psi_1$ with $\psi_1(p)=q$.
\end{corollary}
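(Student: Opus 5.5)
We will realise the path $\gamma$ as the trajectory of $p$ under the flow of a time-dependent, compactly supported contact vector field $X_t$ satisfying $X_t(\gamma(t))=\dot\gamma(t)$ for all $t\in[0,1]$. Once such a field exists, we let $\psi_t$ be its flow with $\psi_0=\mathrm{id}$. Compact support guarantees that this flow exists for all $t\in[0,1]$, and each $\psi_t$ is a contactomorphism because $X_t$ is contact. Both $t\mapsto\psi_t(p)$ and $t\mapsto\gamma(t)$ solve the ODE $\dot x=X_t(x)$ with initial value $p$, so uniqueness gives $\psi_t(p)=\gamma(t)$. (Connectedness of $N$ is only needed to guarantee that such a path $\gamma$ exists; we may assume $\gamma$ is smooth, reparametrizing it to be constant near the endpoints if convenient.)

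To build $X_t$ we use contact Hamiltonians. Since the statement is local, we may fix a contact form $\alpha$ defining $\xi$ near the compact set $\gamma([0,1])$, with Reeb field $R$. Recall that a function $H$ determines a unique vector field $X_H$ satisfying
\[
\alpha(X_H)=H,\qquad i_{X_H}d\alpha=dH(R)\,\alpha-dH ,
\]
and that $X_H$ is a contact vector field. At a single point $z$ the value $X_H(z)$ depends only on the $1$-jet of $H$ at $z$.

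The key pointwise step is to check that any tangent vector $v\in T_zN$ arises as $X_H(z)$ for a suitable $1$-jet. Write $v=aR(z)+w$ with $w\in\xi_z$. Then we need:
\begin{itemize}
\item $H(z)=a$, which fixes the value of $H$;
\item $dH|_{\xi_z}=-\,i_w d\alpha|_{\xi_z}$, which fixes the restriction of $dH$ to $\xi$.
\end{itemize}
The $R$-component of $dH(z)$ remains free. This choice is consistent because $d\alpha|_{\xi}$ is nondegenerate. Conversely, with these choices, the formula above forces $X_H(z)=v$.

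Next we choose a smooth family $H_t$ whose $1$-jet at $\gamma(t)$ is the one prescribed by $v=\dot\gamma(t)$, and which has compact support. We cover $[0,1]$ by finitely many intervals $I_j$ such that each $\gamma(I_j)$ lies in a coordinate chart $U_j$. On $U_j$ we take $H_t(x)=\rho_j(x)\,\bigl(a(t)+\ell_t(x-\gamma(t))\bigr)$, where $\ell_t$ is a linear form realising the prescribed differential and $\rho_j$ is a cutoff equal to $1$ near $\gamma(I_j)$. We then glue in $t$ using a partition of unity subordinate to the $I_j$. Gluing is legitimate because the conditions on the $1$-jet at $\gamma(t)$ are affine: a convex combination of functions that each have the correct $1$-jet at $\gamma(t)$ again has the correct $1$-jet there.

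Because $H_t$ depends on $t$, self-intersections of $\gamma$ cause no conflict: at each fixed time only the single point $\gamma(t)$ is constrained. The resulting $X_t:=X_{H_t}$ is smooth in $(t,x)$ and compactly supported, which completes the construction.

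The only genuinely non-formal point is the pointwise linear algebra in the second paragraph, namely the nondegeneracy of $d\alpha$ on $\xi$. Everything else is standard cutoff and ODE bookkeeping. Taking $t=1$ gives the contactomorphism $\psi_1$ with $\psi_1(p)=q$.
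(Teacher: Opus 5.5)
Your proof is correct and follows essentially the paper's route. The paper invokes the isotopy extension theorem for isotropic submanifolds (Theorem 2.6.2 of \cite{geiges2008contact}) with $L$ a point. Your construction --- prescribing the $1$-jet $H_t(\gamma(t))=\alpha(\dot\gamma(t))$ and $dH_t|_{\xi}=-i_{\dot\gamma(t)}d\alpha|_{\xi}$ along the path, cutting off, and flowing $X_{H_t}$ --- is exactly that theorem's proof specialized to $\dim L=0$, where the isotropy compatibility condition on $TL_t$ is vacuous.

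Your only tacit assumption is that $\xi$ is coorientable near $\gamma([0,1])$. This is Geiges' standing convention and holds in the paper's application to $S^1_gM$ with the Liouville form. Without it, your appeal to locality does not give a single $\alpha$ along a long path. You can avoid it by gluing the local contact vector fields $X_{H^{(j)}_t}$, rather than the Hamiltonians, with your partition of unity in $t$.
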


Let $\gamma:[0,1]\to S^1_gU$ be a smooth path such that $\gamma(0)=\alpha_1'(1)$ for and $\gamma(1)=\alpha_2'(1)$, such that $\pi\circ\gamma(t)=\alpha_2(tT_2)$ for all $t\in[0,1]$. It should be clear that such a path exists. We apply the theorem within a small neighbourhood $V$ of $\gamma$ in $S^1_gM$ (which is again a contact manifold), so that its support has compact footprint in $U$. Let $\phi_t$ denote the contact isotopy given by the theorem, which naturally extends as the identity outside $V$ to $S^1_gM$ (and denote it with the same letter). Recall that the Liouville form is the chosen contact form on $S^1_gM$. As $\psi_1$ is a contact isotopy, there exists a smooth positive function $f:S^1_gM\to \R$ such that $(\psi_1^{-1})^*\lambda=f\lambda$, which is equal to one outside of $V$. 

To construct $S$, we will use $f$ to `rescale' $S_g^1M$: let $T^*M\setminus 0_M$ be the cotangent bundle without the zero section, and let $\phi:S^1M \to T^*M\setminus 0_M$ be the fibered map defined by $(x,p)\mapsto (x,f(x,p)\cdot p)$. Since $f$ is smooth, $\phi$ is smooth, and moreover $\phi^*\lambda=f\lambda$. Indeed, $\phi$ preserves fibers, so it follows from the chain rule that $d\pi\circ \phi_*=d\pi$. It is straightforward to verify that $\phi$ is an embedding, and that its image $S:=\phi(S^1_gM)$ is star-shaped and of contact-type. 

An important observation is that $(S^1_g,\lambda)$ and $(S,\lambda)$ are strictly contactomorphic via $\phi\circ \psi_1$, by construction. Note additionally that by construction this map is the identity outside of $S^1_gU$. 

It remains to prove the claims about $S$. Let $\tilde
S$ denote the universal cover of $S$ in $T^*\tilde{M}$, and observe that $\phi\circ\psi_1$ lifts to a strict contactomorphism $\nu:S^1_{\tilde{g}}\tilde{M}\to \tilde{S}$ which is the identity outside of all preimages of $S^1_gU$. As $U$ is geodesically convex and its closure is compact, there is a maximal time $T>0$ such that a unit speed geodesic $\alpha:\R\to \tilde{M}$ spends at most time $T$ in the connected pieces of the preimage of $U$. Putting the two together: for any Reeb chord $\gamma:\R\to \tilde{S}$ and real numbers $a<b\in \R$, there exist $a'\leq a$ and $b'\geq b$ with $|a-a'|\leq T$ and $|b-b'|\leq T$ such that $\alpha|[a',b']$ is a chord of minimal action between its end-points. Indeed, as $\nu$ is a strict contactomorphism and the identity outside of the preimage of $U$, it identifies $S^1_{\tilde{g}}\tilde{M}$ chords and $\tilde{S}$ connecting the base point $\alpha(a')$ and $\alpha(b')$ while preserving the action. As $\tilde{g}$ has non-positive curvature, all these chords have minimal action. In conclusion, $S$ is coarsely minimal.

Additionally, since $\phi\circ\psi_1$ has support in $S^1_g U$, Reeb chords of $\tilde{S}$ have Haudsdorff distance at most $T$ from $\tilde{g}$-chords. As $\tilde{g}$ has negative curvature, it has the divergence property. Hence, the Reeb chords $\tilde{S}$ must then also have the divergence property.

\subsubsection*{Non-convexity}
Finally, we show that $S$ has some non-convex fibers: let $\beta:\R\to S^1_gM$ denote the Reeb chord of $f\lambda$ starting at $\alpha_1'(0)$. By construction, $\beta(T_1)=\alpha_2'(1)$. We want to show that there is a point $z'$ near $z$ on $\mathrm{im}\: \pi\circ \beta$, such that the tangent vector of $\pi\circ \beta$ is not parallel to the tangent vector of the shortest geodesic from $x$ to $z'$. As we are able to choose the neighborhood $V$ as small as we like, and the image of $\alpha_2$ only intersects $\mathrm{im}\:\alpha_3$ at the end point, there must be such a point. Moreover, by a small enough choice of $V$, the geodesic chord connecting $x$ to $z'$ is also a $f\lambda$-chord.

So far we have produced a point $z'\in U$ which has at least two chords connecting $S^1_{g,x}M$ to $S^1_{g,z'}M$, whose projections of tangent vectors to $TM$ are not parallel. The set of such points is also open. Arguing as at the beginning of \ref{S:Def FH}, Sard's theorem implies that there is a point $z''$ which is additionally regular. This allows us to finally choose a point $\overline{z}$ in $U$ which has all three properties, and additionally has that the two chords $\beta_1,\beta_2:\R\to S^1_gM$ have unequal length with respect to parameterization. 

Assume for contradiction that $S$ is convex. We will denote by $\tilde{\beta}_i:\R\to S$ the composition $\phi\circ\beta$. Note that $\pi\circ \beta=\pi\circ \tilde{\beta}$. Without loss of generality, $\tilde{\beta}_1$ is shorter and $\pi\circ\beta_2(0)=x$. Let $t_1<0$ be the last time where $\pi\circ \beta_2$ intersects $\partial U$ before hitting $x$, and let $t_3>0$ be the first time thereafter it hits $\partial U$, and let $t_{2,i}$ be the first time for which $\pi\circ\beta_i(t)=x$. By construction, $\pi\circ \beta_2$ is the unique chord in its homotopy class connecting $\pi\circ \tilde{\beta}_2(t_1)$ to $\pi\circ \tilde{\beta}_2(t_2)$. Indeed: for the geodesic flows this is true (since it has negative curvature), and the strict contactomoprhism connecting preserves this, the map has compact support in $U$. 

However, if we switch to the Lagrangian perspective, we obtain the contradiction: define the absolutely continuous curve $\zeta:[0,t_3-t_1]\to M$ by following $\pi\circ \beta_2|[t_1,0]$, then $\pi \circ\beta_1(t_{2,1}/{t_{2,2}})|[0,t_{2,2}]$, and finally $\pi\circ\beta_2|[t_{2,2},t_3]$. Let $\tilde\zeta(t):=\zeta(t(t_3-t_1)):[0,1]\to M$ be the rescaled map: it has less $\mathds{L}$-energy than $\pi\circ\beta(t/(t_3-t_1))$, but it is not $C^1$. Hence, it cannot be an integral curve. However, there are no other integral curves connecting the end points, which contradicts Corollary \ref{cor:fathicor}. The proof of Theorem \ref{thm:nonconvexhypersurface} is now complete.   

\appendix
\section{Non-minimality of non-convex hypersurfaces}\label{S:Appendix}
In this Appendix we will show that replacing coarse minimality by minimality forces fibers to be convex, see Theorem \ref{thm:nonconvexunequalchord}. That is, consider a fiberwise star-shaped hypersurface $S\subset T^*M$ in the cotangent bundle of a smooth manifold $M$.  If the Reeb flow of $S$ has the property that each chord minimizes action in the sense of Definition \ref{D:minimal Reeb}, then we will show that each fiber $S_x=S\cap T^*_xM$ bounds a convex neighborhood of the origin. We include a proof as we could not find this in the literature.  

This result is entirely local, and Theorem \ref{thm:nonconvexunequalchord} proves the contrapositive statement: If $S_x=S\cap T^*_xM$ is a non-convex fiber, then in any neighbourhood $U$ of $x$ there exists a point $y\in U$ for which there exist two Reeb chords $\gamma_i:[0,T_i]\to S$ from $S_x$ to $S_y$ (in that order), whose projections to the base lie in $U$, are homotopic within $U$ with fixed ends, but have distinct terminal times $T_i$. Obviously, at least one of these chords is not minimal in the sense of Definition \ref{D:minimal Reeb}.  

To give an idea of the proof: We will first prove the statement for what we call the linearized flow, i.e.\  where there is a bundle chart of $T^*M$ for which the fiber is translation invariant. Then we show that this implies the full statement.  The proof of the linearized statement takes a detour through Morse theory for star-shaped bodies: In the next section we will prove Theorem \ref{thm:morseofnonconvex} and Corollary \ref{cor:morseofnonconvex}, which roughly say that there is a direction at $x$ for which at least two chords of the linearized flow in that direction have different speeds.

\subsection{Morse functions for non-convex star-shaped sets}
The main goal of this subsection is to prove Theorem \ref{thm:morseofnonconvex}, which states that a smooth boundary of non-convex star-shaped set in $\mathbb{R}^n$ (for $n\geq 2$) must have at least three critical points that live in distinct level sets.  

Let $S\subset \mathbb{R}^{k+1}$ be a closed oriented smooth embedded manifold. Let $q\in \mathbb{S}^k$ be a unit vector, and let $F_q:\mathbb{R}^{k+1}\to \R$ be the projection along $q$, given by $x\mapsto \langle q,x\rangle$, and let $f_q$ denote the restriction of $F_q$ to $S$.

Let $G:S\to \mathbb{S}^{k}$ be the Gauss map. For a point $x\in S$ there exists a smooth function $H:\mathbb{R}^{k+1}\to \R$ and an open neigbhourhood $U\subset \mathbb{R}^{k+1}$ containing $x$ such that zero is a regular value of $H$, $\nabla H$ points in the normal direction of $S$, and $H^{-1}(0)\cap U=S\cap U$. We will call such a pair $(H,U)$ a generating pair for $S$ at $x$. 

\begin{proposition}\label{prop:gaussandheight}
    Let $S\subset \mathbb{R}^{k+1}$ be a closed oriented smooth embedded manifold, and let $q\in\mathbb{S}^k$ be a unit vector. A point $x\in S$ is a critical point of $f_q$ if and only if $G(x)=\pm q$.
    
    Moreover, let $(H,U)$ be a generating pair for $S$ at $x$. Then the index, nullity, and co-index of $\nabla^2H(x)|_{TS}$ is respectively equal the the co-index, nullity, and index of $f_q$ at $x$.

    Finally, $x\in S$ is a regular critical point of $f_q$ if and only if it is a regular point of $G$, i.e.\ $dG(x)$ has full rank.
\end{proposition}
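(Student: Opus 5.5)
The plan is a direct computation in a Euclidean chart. Write $S$ near $x$ as the zero set of a generating function $H$, so that $T_xS=\ker dH(x)=\nabla H(x)^\perp$ and $G(x)=\nabla H(x)/|\nabla H(x)|$ (up to the orientation sign). For the first claim, note $df_q(x)=\langle q,\cdot\rangle|_{T_xS}$. This vanishes if and only if $q\perp T_xS$, that is, if and only if $q$ is parallel to the normal, i.e.\ $G(x)=\pm q$.

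For the index statement, I would use Lagrange multipliers. At a critical point, $q=\mu\nabla H(x)$ for some $\mu\neq0$; after replacing $H$ by $\pm H$ (this is where the orientation sign enters), one may take $\mu>0$. Take a curve $c(s)$ in $S$ with $c(0)=x$ and $\dot c(0)=v\in T_xS$. Differentiating $H(c(s))=0$ twice gives
\[
\langle\nabla H(x),\ddot c(0)\rangle=-\nabla^2H(x)(v,v),
\]
and hence
\[
\frac{d^2}{ds^2}f_q(c(s))\big|_{0}=\langle q,\ddot c(0)\rangle=\mu\langle\nabla H,\ddot c(0)\rangle=-\mu\,\nabla^2H(x)(v,v).
\]
So the Hessian of $f_q$ at $x$ equals $-\mu\,\nabla^2H(x)|_{T_xS}$ as quadratic forms. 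Negation swaps index and co-index and preserves nullity, which gives the claimed correspondence.

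For the last claim, I would differentiate the Gauss map: $dG(x)v=P\,\nabla^2H(x)v/|\nabla H(x)|$, where $P$ is the orthogonal projection onto $T_xS=T_{G(x)}\mathbb S^k$, because the derivative of $\nabla H/|\nabla H|$ is the normalized tangential part of $\nabla^2 H\,v$. Thus $dG(x)$ is, up to a positive scalar, the shape operator, i.e.\ the self-adjoint endomorphism of $T_xS$ associated with the form $\nabla^2H(x)|_{T_xS}$. It is invertible exactly when that form is nondegenerate, which by the previous paragraph holds exactly when the critical point of $f_q$ is nondegenerate. (If $x$ is not a critical point of $f_q$, "regular critical point" is interpreted only at critical points, and the statement is understood for $G(x)=\pm q$.)

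The main obstacle is bookkeeping rather than substance: fixing the sign convention relating the orientation, $G$, and $\nabla H$ so that $\mu>0$, and checking that the identification $T_{G(x)}\mathbb S^k=T_xS$ makes $dG(x)$ an endomorphism. Independence from the choice of generating pair follows because any two such $H$ differ near $x$ by a positive multiplicative factor, which rescales the restricted Hessian positively at points of $S$.
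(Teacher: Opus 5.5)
Your route is essentially the paper's. You use Lagrange multipliers for the first claim, differentiate $H\equiv 0$ twice along $S$ to compare the Hessian of $f_q$ with $\nabla^2H(x)|_{T_xS}$, and observe that $dG(x)$ is the normalized restricted Hessian of $H$ for the last claim. The paper writes $S$ as a graph over $T_xS$ after rotating $q$ to $e_1$, while you differentiate along curves; these are the same computation. Your scalar $-\mu$ is correct, whereas the paper's displayed formula $-\nabla^2 f/\|\nabla H\|$ has $\|\nabla H\|$ on the wrong side (harmless for signatures).

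The step that does not hold up is ``replace $H$ by $\pm H$ so that $\mu>0$''. The claim is about a given generating pair, and $H\mapsto -H$ negates $\nabla^2H(x)|_{T_xS}$, swapping its index and co-index, so this is not a free normalization. You are also conflating two independent signs. One is the orientation sign relating $\nabla H$ to $G$. The other is the sign of $\mu$: once $\nabla H/\|\nabla H\|=G$ is fixed (the paper's convention, and the one your ``positive multiplicative factor'' remark presupposes), it is determined by whether $G(x)=q$ or $G(x)=-q$.

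Your identity $\nabla^2 f_q(x)=-\mu\,\nabla^2H(x)|_{T_xS}$ then gives the stated index/co-index swap only when $G(x)=q$. When $G(x)=-q$ it gives index $=$ index and co-index $=$ co-index. The statement as written is in fact false in that case. Take the unit sphere with $H(y)=|y|^2-1$ and $q$ the north pole: at $x=-q$ the restricted Hessian is positive definite (index $0$), while $f_q$ has a minimum there (co-index $k$).

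The paper's proof carries the same tacit restriction: with $q=e_1$ it implicitly takes $\partial_1H(x)>0$. So the second claim should be stated for $q$ a positive multiple of $\nabla H(x)$, with the opposite case recorded separately. The first claim, the nullity part, and the statement about $dG(x)$ do not depend on the sign of $\mu$, and your arguments for them are fine.
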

\begin{proof}
    Let $(H,U)$ be a generating pair for a point $x\in S$. Then observe that $G(y)=\nabla H(y)/\|\nabla H(y)\|$ in $U\cap S$. The method of Lagrange multipliers implies that $x$ is a critical value of $f_q$ if and only if $\nabla F_q(x)=\langle q,\cdot\rangle $ is a scalar multiple of $\nabla H(x)$. This proves the first statement.

    For the second statement: let $x$ be a critical point of $f_q$, and let $(H,U)$ be any generating pair. Implicitly, $S\cap U$ is defined by $H(y)=0$. Without loss of generality, assume that $q=(1,0,\ldots,0)\in\mathbb{S}^{k}$, so that locally we may choose coordinates $(x_1,\ldots,x_k)\in \mathbb{R}^{k}$ and a smooth function $f:\R^k\to \R$ satisfying $H(f(x_1,\ldots,x_k),x_1,\ldots,x_k)=0$. The implicit function theorem implies that $\nabla^2H(x)|_{TS}=-\nabla^2f/\|\nabla H\|$ (where the Hessian on the right hand side is taken in $\R^k$). This proves the second statement.

    For the final statement, observe that in the above coordinates $\nabla ^2f_q=\nabla^2f$, while $G(y)=\nabla H(y)/\|\nabla H(y)\|$ in $S\cap U$, so that $\nabla G(y)=\nabla^2H(y)/\|\nabla H(y)\||_{TS}$, which is proportional to $\nabla^2f(y)$ be the previous calculation. This proves the third claim.
\end{proof}

\begin{proposition}\label{prop:densemorseheight}
    For every closed embedded smooth manifold $S\subset \mathbb{R}^{k+1}$ there is an open and dense set $A\subset \mathbb{S}^k$ of full-measure, such that $f_q$ is a Morse function for all $q\in A$. 
\end{proposition}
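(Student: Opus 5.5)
The plan is to identify the set $A$ with the set of regular values of the Gauss map (or rather of $\pm G$), and then invoke Sard's theorem for density and full measure, together with compactness for openness. By Proposition~\ref{prop:gaussandheight}, $x\in S$ is a critical point of $f_q$ if and only if $G(x)=\pm q$. Moreover, such a critical point is non-degenerate if and only if $dG(x)$ has full rank. Hence $f_q$ is Morse exactly when both $q$ and $-q$ are regular values of $G:S\to\mathbb{S}^k$, in the sense that every preimage point is a regular point. Note that $S$ and $\mathbb{S}^k$ both have dimension $k$, so regularity means $dG(x)$ is an isomorphism. We therefore set
\[
A:=\{q\in\mathbb{S}^k\ :\ q \text{ and } -q \text{ are regular values of } G\}=\mathcal{R}\cap(-\mathcal{R}),
\]
where $\mathcal{R}$ denotes the set of regular values of $G$.

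For full measure and density, $G$ is smooth, so Sard's theorem shows that the critical values $G(\mathrm{Crit}(G))$ form a null set in $\mathbb{S}^k$. The antipodal map preserves the round measure, so $-\mathcal{R}^c$ is also null. Hence $A^c=\mathcal{R}^c\cup(-\mathcal{R}^c)$ is null, and $A$ has full measure. A full-measure set is automatically dense, since every nonempty open subset of $\mathbb{S}^k$ has positive measure.

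For openness, $\mathrm{Crit}(G)=\{x\in S: \det dG(x)=0\}$ is closed in the compact manifold $S$, hence compact. Its image $G(\mathrm{Crit}(G))$ is therefore compact and so closed in $\mathbb{S}^k$. Thus $\mathcal{R}$ is open, and so is $A=\mathcal{R}\cap(-\mathcal{R})$.

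The only step needing care is the equivalence between ``$f_q$ is Morse'' and ``$\pm q\in\mathcal{R}$'', and this is exactly the content of Proposition~\ref{prop:gaussandheight}. One should check that the last statement there applies at every critical point of $f_q$, including points with $G(x)=-q$; since $f_{-q}=-f_q$ has the same critical points and the same non-degeneracy, this is harmless. We also need that $G$ is well defined and smooth, which holds because $S$ is a closed oriented embedded hypersurface (closed embedded hypersurfaces of $\mathbb{R}^{k+1}$ are orientable). With these points verified, the argument is complete.
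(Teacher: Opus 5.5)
Your proof is correct and follows the same route as the paper: you characterize Morse height functions $f_q$ as those for which both $q$ and $-q$ are regular values of the Gauss map, apply Sard, and intersect the regular-value set with its antipodal image. You also supply the compactness argument for openness (the image of the closed, hence compact, set $\mathrm{Crit}(G)$ is closed), which the paper credits to Sard's lemma without justification.
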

\begin{proof}
    Let $q\in \mathbb{S}^k$ be a unit vector. By the above proposition, the critical points of $f_q$ are given by $G^{-1}(q)\cup G^{-1}(-q)$. The final claim of the proposition shows that the critical points are all regular if and only if they are regular points of $G$. By Sard's Lemma, there is an open and dense set $A'\subset \mathbb{S}^k$ of regular values of $G$, with full measure. Intersecting this set with minus itself will still be open, dense, and of full-measure. Hence, $A=A'\cap -A'$ is the desired set.
\end{proof}

Let $D\subset \R^{k+1}$ be a star-shaped domain with respect to zero, with a smooth boundary $S$ that is transverse to the radial vector field $R=\sum^{k+1}_{i=1}x_i\partial_i$. Then $S$ is a canonically oriented and embedded closed smooth manifold.

\begin{theorem}\label{thm:morseofnonconvex}
    Suppose that the star-shaped domain $D$ is not convex, then there exists $q\in \mathbb{S}^k$ such that the function $f_q:S\to \R$ is Morse and has at least three critical points with distinct values.
\end{theorem}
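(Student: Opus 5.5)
Since $D$ is not convex and $S=\partial D$ is smooth and star-shaped with respect to $0$, I will first find a point $x_0\in S$ and a unit vector $q$ such that $x_0$ is a critical point of $f_q$ with $G(x_0)=q$, yet $x_0$ is not a global maximum of $f_q$. Then I will check that $f_q$ has, besides $x_0$, a global maximum and a global minimum at three distinct levels, and finally perturb $q$ to make $f_q$ Morse while keeping the three critical values distinct.

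\textbf{Step 1 (a supporting-type point that is not supporting).} Because $D$ is compact but not convex, the closed convex hull $K$ of $D$ strictly contains $D$. I claim some boundary point $x_0\in S$ has outward normal $q=G(x_0)$ with $\langle q,y\rangle>\langle q,x_0\rangle$ for some $y\in S$; that is, the tangent hyperplane at $x_0$ does not support $D$. If every tangent hyperplane supported $D$, then $D$ would lie in the intersection of its tangent half-spaces. That intersection equals $D$ for a connected smooth closed hypersurface bounding $D$, by the classical Hadamard-type argument: a point outside $D$ has a nearest point on $S$, and the segment to it meets the tangent plane there orthogonally. So $D$ would be convex, a contradiction. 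Among candidate points I will pick one at which the second fundamental form is nondegenerate with mixed or negative sign. Alternatively, I choose $x_0$ where the Gauss curvature is negative in some direction: non-convexity forces a point with a negative principal curvature, since a closed hypersurface with nonnegative second fundamental form everywhere is convex by Hadamard/Sacksteder. By Proposition \ref{prop:gaussandheight}, $x_0$ is a critical point of $f_q$ whose index is not $k$, so it is not a local maximum.

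\textbf{Step 2 (three levels).} The function $f_q$ attains a maximum at some $x_+$ and a minimum at some $x_-$. Star-shapedness gives $\langle G(x),x\rangle>0$ for all $x\in S$, so $f_q(x_0)=\langle q,x_0\rangle>0$. On the other hand, the ray $-tq$ meets $S$ at a point where $f_q<0$, so $\min f_q<0<f_q(x_0)$. Similarly, $x_0$ is not a local maximum, so $f_q(x_0)<\max f_q$. This gives three critical points with distinct values $\min f_q<f_q(x_0)<\max f_q$.

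\textbf{Step 3 (Morse perturbation).} By Proposition \ref{prop:densemorseheight}, Morse directions form an open dense set. I need $x_0$ to be a regular point of $G$ so that the critical point persists. To arrange this, I pick $x_0$ such that $q$ is a regular value of $G$: the set of points with a negative principal curvature is open, its image under $G$ has nonempty interior wherever $dG$ has full rank, and Sard gives regular values there. When $q$ is a regular value, $G^{-1}(q)$ is finite and stable, so for nearby Morse directions $q'$ there is a nearby critical point $x_0'$ with the same index. The max and min stay away from it, and strict inequalities persist under small perturbation.

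The main obstacle is Step 1 combined with Step 3: producing a regular point $x_0$ of $G$ with a negative principal curvature. I expect to handle it by using the Hadamard theorem to get a negative curvature direction on an open set. If $dG$ is degenerate on that whole open set, I fall back to noting that a non-supporting tangent hyperplane is an open condition on points, and then take a regular value of $G$ inside the image of that open set.
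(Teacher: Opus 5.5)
Steps 1 and 2 are fine. In particular, your observation that a critical point with $G(x)=q$ has $f_q(x)>0$ while $\min f_q<0$ is a clean way to separate levels. The gap is in Step 3, and it is exactly the obstacle you flag; it is not resolved.

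To carry the intermediate critical point over to a nearby Morse direction $q'$, you need a point $x_0$ that satisfies two conditions at once: its tangent hyperplane does not support $D$, \emph{and} $dG(x_0)$ is invertible. Nothing in your argument produces such a point. Your sentence about picking a point where the second fundamental form is nondegenerate is asserted, not proved.

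For $k=1$ there is no issue, since negative curvature already gives $dG\neq 0$. For $k\geq 2$, however, a point with a negative principal curvature can also have a zero principal curvature. This can happen on a whole open set, for example a cylindrical groove with principal curvatures $(-\kappa,0,\dots,0)$. If $dG$ is singular at every point of an open set $W$, then $G(W)$ consists only of critical values. By Sard it has measure zero and contains \emph{no} regular value. Your fallback to the larger open set $W'$ of non-supporting points has the same defect unless you show that $dG$ is invertible somewhere on $W'$.

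Without nondegeneracy there is no persistence. Suppose near $x_0$ the function is $f_q=c_0+u_1^2+u_2^3$ in graph coordinates, so $x_0$ is not a local maximum. Tilting $q$ tangentially gives, up to normalization, $u_1^2+u_2^3+\epsilon_1u_1+\epsilon_2u_2$. This has no critical point near $x_0$ when $\epsilon_2>0$, which is an open set of nearby directions.

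The missing claim is not a technicality; it is essentially the content of the theorem. For $k\ge 2$, a Morse $f_q$ on $S\cong\mathbb{S}^k$ with only two critical values has exactly two critical points. Moreover, $\deg G=1$, and nondegenerate global maxima count $+1$. Hence the existence of a regular point of $G$ in $W'$ amounts to $\#G^{-1}(q)\geq 2$ on a set of positive measure. That is the rigidity case of Chern--Lashof: minimal total absolute curvature forces convexity.

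The paper avoids nondegeneracy entirely. It gets robustness from local maxima of the linear function $F_q$ on a compact set. These persist under $C^0$-small changes of $q$ by the stability proposition preceding Lemma \ref{lem:nonconvexdisjointmaxi}. That lemma then gives an open set of directions in which the local maximizers are disconnected. Only after this does the paper intersect with the dense Morse directions of Proposition \ref{prop:densemorseheight}.

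You could repair your proof in one of two ways:
\begin{itemize}
\item Produce a strictly isolated local maximum of some $f_q$ with $G=+q$ that is not a global maximum. Combine its $C^0$-stability with your Step 2 inequalities; this also settles the distinct-values bookkeeping.
\item Prove directly that $G$ is nondegenerate somewhere on $W'$, for example via a relative-nullity or ruling argument.
\end{itemize}
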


Before we prove this result, note the following corollary:
\begin{corollary}\label{cor:morseofnonconvex}
    Suppose that the star-shaped domain $D$ is not convex, and let $H_S:\R^{k+1}\to \R$ be the two-homogeneous function associated with $S=H^{-1}(1/2)$. Then there exists $q\in \mathbb{S}^{k+1}$ and at least two distinct points $p_0,p_1\in S$ with $\nabla H_S(p_0)=c\nabla H_S(p_1)$ for some real constant $c>0$ unequal to $1$.
\end{corollary}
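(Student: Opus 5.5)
We deduce Corollary \ref{cor:morseofnonconvex} from Theorem \ref{thm:morseofnonconvex}, using Proposition \ref{prop:gaussandheight} and the $2$-homogeneity of $H_S$. By Theorem \ref{thm:morseofnonconvex} we fix a unit vector $q$ such that $f_q$ is Morse and has at least three critical points $x_1,x_2,x_3\in S$ with pairwise distinct values $f_q(x_1)<f_q(x_2)<f_q(x_3)$. By Proposition \ref{prop:gaussandheight} each $x_i$ satisfies $G(x_i)=\pm q$. Since $\nabla H_S$ is normal to $S=H_S^{-1}(1/2)$ and points outward, we have $\nabla H_S(x_i)=\lambda_i q$ for some real $\lambda_i\neq 0$.

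The first step is to use the pigeonhole principle: among three nonzero reals $\lambda_1,\lambda_2,\lambda_3$, two have the same sign. Call the corresponding points $p_0,p_1$. Then $\nabla H_S(p_0)=c\,\nabla H_S(p_1)$ with $c=\lambda_0/\lambda_1>0$.

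The second step shows $c\neq 1$, and this is where homogeneity enters. Euler's identity (\eqref{E:dH=2H} in the flat setting) gives
\[
\langle \nabla H_S(p),p\rangle = 2H_S(p)=1 \qquad \forall p\in S .
\]
Hence $\lambda_i\langle q,p_i\rangle = 1$, i.e. $\lambda_i = 1/f_q(p_i)$. In particular $f_q(p_i)\neq 0$, consistent with star-shapedness. Since the values $f_q(p_0)\neq f_q(p_1)$ are distinct, we get $\lambda_0\neq\lambda_1$, so $c\neq 1$.

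The only delicate point is the sign bookkeeping in the pigeonhole step: we need $c>0$ and not merely $c\neq\pm1$. Distinct critical values alone would not suffice without three of them, which is exactly why Theorem \ref{thm:morseofnonconvex} provides three. With $\lambda_i=1/f_q(x_i)$, the sign of $\lambda_i$ equals the sign of $f_q(x_i)$, so the pigeonhole argument applies directly. (The corollary's ``$q\in\mathbb{S}^{k+1}$'' should read $q\in\mathbb{S}^k$; either way the direction $q$ plays no further role.)
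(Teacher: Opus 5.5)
Your proof is correct and is essentially the paper's argument. You take three critical points of $f_q$ with pairwise distinct values from Theorem \ref{thm:morseofnonconvex}, pick two whose gradients $\nabla H_S$ are positively parallel, and use Euler's identity $\langle p,\nabla H_S(p)\rangle = 2H_S(p)$ together with $f_q(p_i)=\langle q,p_i\rangle$ to rule out $c=1$.

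Two small differences work in your favour. You make explicit the pigeonhole/sign step that the paper leaves implicit. Your value $\langle p,\nabla H_S(p)\rangle=1$ on $S$ is the correct constant; the paper writes $1/2$, which does not affect its argument.
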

\begin{proof}
    Let $q\in \mathbb{S}^k$ be unit vector given by Theorem \ref{thm:morseofnonconvex} such that $f_q:S\to \R$ is Morse with at least three distinct critical values. Recall from Proposition \ref{prop:gaussandheight} that critical points $x\in S$ of $f_q$ were precisely the points where the unit normal $G(x)$ was parallel or anti-parallel to $q$. Observe that $G(x)=\nabla H_S/\|\nabla H_S\|$ on $S$. Since there are at least three critical points, there are two critical points $p_0,p_1$ such that $f_q(p_0)\neq f_q(p_1)$ and $\nabla H_S(p_0)=c\nabla H_S(p_1)=c'q$ for some $c>0$ and $c'\neq0$. It remains to show that $c\neq 1$.
    
    Let $R=p\partial_p$ be the radial vector field. As $H_S$ is two-homogeneous, it satisfies $dH_s(R)=2H$. Writing this in the form of gradients, for every $p\in S$ we have $\langle p,\nabla H_S(p)\rangle =1/2$. Although $q$ is not parallel to the gradient along $S$ everywhere, for $p_0,p_1$ we also have the relation
    $f_q(p_i)=\langle p_i,q\rangle$. Since the critical values at the points $p_i$ are distinct, it now follows that $c\neq 1$.
\end{proof}

Before we can prove Theorem \ref{thm:morseofnonconvex}, we have to make a detour. For some while, $S$ will be merely a compact subset without any manifold structure.

\begin{definition}
        Let $S\subset \mathbb{R}^{k+1}$ be a compact subset. Let $F:\mathbb{R}^{k+1}\to \R$ be a continuous function. A local maximum of $F$ in $S$ is a point $x\in S$ with the following property: there exists an open set $U\subset S$ containing $x$ such that $\sup_{y\in U}F(y)=F(x)$ and $F(\partial U)$ is contained in $(-\infty,F(x))$. Here the boundary is with respect to the topology on $S$.    Denote by $$\mathrm{maxi}(F,S)$$ the set of local maximizers of $F$ in $S$.  
    \end{definition}
Observe that for a continuous function $F:\mathbb{R}^{k+1}\to \R$ and compact subset $S\subset \mathbb{R}^{k+1}$, the set $\mathrm{maxi}(F,S)$ is not necessarily closed.  However, it does contain global maxima, so it is non-empty when $S$ is non-empty. The neighbourhood $U$ in the definition of a local maximum above will be called an \textit{isolating neighbourhood} while the pair $(x,U)$ is called an \textit{isolating pair} (here we are always assuming it is clear what the $F$ and $S$ are). The image $F(\mathrm{maxi}(F,S))$ will be called the \textit{spectrum of local maximizers} (or just spectrum when it is clear what we are referring to).

\begin{lemma}\label{lem:spectrumnowheredense}
    Let $S\subset \mathbb{R}^{k+1}$ be a compact subset. Let $F:\mathbb{R}^{k+1}\to \R$ be a continuous function. The spectrum of local maximizers is no-where dense in $\R$. 
\end{lemma}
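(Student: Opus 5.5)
The plan is to attach to every value $v\in F(\mathrm{maxi}(F,S))$ a piece of discrete data taken from a fixed countable family, so that distinct values are forced apart. Fix a countable basis $\{B_j\}$ for the topology of the compact metric space $S$.

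Let $(x,U)$ be an isolating pair with $F(x)=v$. Since $\partial U$ is compact and $F(\partial U)\subset(-\infty,v)$, there is a rational $\rho_U<v$ with $F<\rho_U$ on $\partial U$ (if $\partial U=\varnothing$, take $\rho_U$ to be any rational below $v$).

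The first step is to replace $U$ by the connected component $K$ of the superlevel set $\{y\in \overline U: F(y)\ge \rho_U\}$ that contains $x$. This $K$ lies in $U$ and does not meet $\partial U$. By the usual compactness argument (components of compact sets are intersections of relatively clopen sets), $K$ is separated from the rest of $\{F\ge\rho_U\}\cap\overline U$ by a finite union of basis sets. So each isolating pair yields a finite word $w$ in the $B_j$ together with the rational $\rho_U$, and on the set $W$ encoded by $w$ we have $\max_{W} F = v$. Hence $v$ is determined by the pair $(w,\rho_U)$.

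The second step concludes that the spectrum is countable. Its complement is therefore dense, and the spectrum has measure zero.

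The third step is to upgrade countability to nowhere density, and this is where I expect the real obstacle. I would argue by contradiction: suppose the closure of the spectrum contains an interval $(\alpha,\beta)$. Choose local maxima $x_n$ with values $v_n$ accumulating inside $(\alpha,\beta)$, pass to a convergent subsequence $x_n\to x_\ast$, and try to show two things: that $x_\ast$ is itself a local maximizer, and that the isolating neighbourhoods of nearby $x_n$ are trapped inside a single isolating neighbourhood of $x_\ast$. The latter would force the $v_n$ to coincide with $F(x_\ast)$, contradicting their density in $(\alpha,\beta)$.

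The trapping argument needs the isolating sets to have nonempty boundary, that is, it needs some connectedness of $S$. Without such a hypothesis I expect it to fail. Indeed, if $S$ is a Cantor set and $F$ is the Cantor function, every relatively clopen set $U$ is an isolating neighbourhood of the maximizer of $F$ on $U$, because $\partial U=\varnothing$ makes the boundary condition vacuous. Those maximal values include all dyadic rationals in $[0,1]$, so the spectrum is dense in $[0,1]$.

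So I would prove nowhere density under an additional assumption, namely that $S$ is a connected manifold, or at least locally connected. This is the situation of the intended application, where $S$ is the boundary of a star-shaped domain, a sphere. Under local connectedness, isolating neighbourhoods can be taken connected with nonempty boundary, and the trapping argument can run. For the general compact case I would establish only the countable, measure-zero statement of the second step. That version already suffices to choose directions or levels avoiding the spectrum in the proof of Theorem \ref{thm:morseofnonconvex}.
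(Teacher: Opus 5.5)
You are right that the lemma as stated is false, and your Cantor example is valid. If $a$ is the left endpoint of a removed interval, then $C\cap[0,a]$ is clopen in $C$ and its boundary is empty, so $a$ is a local maximizer. Hence every dyadic rational in $(0,1)$ lies in the spectrum. The paper's proof must therefore fail somewhere, and it does: its set $V$ is the complement of countably many closed sets, so $\{V,V_n\}$ need not be an open cover.

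Your countability step is correct, and it can be shortened. If $(x,U)$ is an isolating pair and $B$ is a basis set with $x\in B\subset U$, then $\sup_B F=F(x)$, so the spectrum injects into the countable basis. No superlevel components or rationals $\rho_U$ are needed. Countability is also exactly what the paper uses later, though not in the way you guess. The lemma is only invoked to conclude that a spectrum containing two distinct values is disconnected, so $\mathrm{maxi}(F,S)$ is disconnected too. In Lemma~\ref{lem:nonconvexdisjointmaxi} this is applied to arbitrary compact sets.

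The genuine gap is your third step. Local connectedness, or even $S$ being a sphere, does not give nowhere density, so no trapping argument can work. Take $S=[0,1]$ and let $f$ be the Cantor function plus a tent on each removed open interval $I$. On $I$ the Cantor function equals a dyadic value $d_I$, and the tent has height $|I|$ and support $\overline{I}$. Then $f$ is continuous, since the supports are disjoint and only finitely many tents exceed any given height. Taking $U=I$ gives $f(\partial I)=\{d_I\}$ and $\sup_I f=d_I+|I|$, so the midpoint of $I$ is a local maximizer. The spectrum therefore contains all numbers $(2j-1)2^{-n}+3^{-n}$, which are dense in $[0,1]$.

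This example transfers to the settings you proposed. Setting $F(y)=f((y_1+1)/2)$ on $\mathbb{S}^k$ with $k\geq 2$ reproduces it with connected isolating bands that have non-empty boundary. Replacing $S$ by the graph of $f$ in $\R^2$ with $F(t,s)=s$ gives a counterexample even among height functions.

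Both halves of your plan fail here. A limit $x_\ast$ of midpoints whose values converge into $(\alpha,\beta)\subset(0,1)$ is a Cantor point, which is not a local maximizer. When $x_\ast$ is one, trapping only gives $v_n\le F(x_\ast)$, not equality. For instance, $x_\ast=1$ is approached by the midpoints of the rightmost removed intervals, whose values are $1-2^{-n}+3^{-n}$.

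Nowhere density does hold when $S$ is a smooth closed hypersurface and $F=F_q$. Isolating maximizers are then critical points of $f_q$, and by Sard the critical values form a compact null set, hence a nowhere dense one. That does not cover the non-smooth compact sets to which the paper applies the lemma. So the right general replacement is the countability statement.
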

Recall that a set $A\subset \R$ is no-where dense if its closure has empty interior. 
\begin{proof}[Proof of the Lemma]
    Suppose for contradiction that there is an open non-empty subset $U=(a,b)\subset \R$ in which the spectrum of local maximizers is dense, where $a<b$ are two real numbers and $(a,b)$ is an open interval. To obtain the contradiction we will construct an open cover of $S$ without finite sub-cover.

    Let $x$ be a local maximizer with value $F(x)=c\in(a,b)$, and let $(x,U)$ be an isolating pair. For every $\epsilon>0$, $U\cap F^{-1}(c-\epsilon,c]$ will still be an isolating neighbourhood of $x$. In particular, for a given $\epsilon>0$, the set $U\cap F^{-1}(c-\epsilon/2,c]$ is open, and its closure is contained in the open set $U\cap F^{-1}(c-\epsilon,\epsilon]$.

    By assumption, the spectrum of local maximizers is dense in $(a,b)$. We can select a sequence of local maximizers $\{x_n:n\in\{1,2,3,\ldots\}\}$ such that the interval
    $$\left(F(x_n)-|a-b|/2^{n+1},F(x_n)+|a-b|/2^{n+1} \right)$$
    does not contain $F(x_{n'})$ for any integer $n'\neq n$. Let $U_n\subset S$ be an isolating neighbourhood for $x_n$. By the previous paragraph, $V_n=U_n\cap F^{-1}(F(x_n)-|a-b|/2^{n+1},F(x_n)]$ is an isolating neibhourhood of $x_n$. Let $V$ be the complement
    $$V:=S\setminus\bigcup_{n\in \N}\mathrm{cl }\:U_n\cap F^{-1}(F(x_n)-|a-b|/2^{n+2},F(x_n)],$$
    where $\mathrm{cl} \:A$ denotes the closure of a set $A\subset S$. The collection of sets $\{V,V_n:n\in \N\}$ is an open covering of $S$ which cannot have a finite sub-cover. This contradiction completes the proof.
\end{proof}

When $S$ is not convex, we want to show that $\mathrm{maxi}(F,S)$ is not connected when $F$ is one of the height functions $F_q=\langle q,\cdot\rangle$, for some $q\in\mathbb{S}^k$. To emphasize; we want to show that there is at least one height function for which this is true. The above Lemma already gives a piece of the proof:   If the spectrum of local maximizers contains more than one point, it is automatically disconnected. Hence, $\mathrm{maxi}(F,S)$ cannot be disconnected. So in the proofs below we will assume without loss of generality that $\mathrm{maxi}(F,S)$ contains only global maxima.
We first show something weaker:

\begin{lemma}
    Let $S\subset \mathbb{R}^{k+1}$ be a compact subset. Let $F_q$ denote the projection along $q\in\mathbb{S}^{k}$. If $\mathrm{maxi}(F_q,S)$ is a convex subset of $\mathbb{R}^{k+1}$ for every $q\in\mathbb{S}^{k}$, then $S$ is itself convex.
\end{lemma}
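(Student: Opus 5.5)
As literally worded, with ``$S$ is convex'' meaning that the compact set $S$ is a convex subset of $\R^{k+1}$, the statement cannot hold. Take $S$ to be a round circle in $\R^2$. For every $q$, the local maximizers of $F_q$ on $S$ reduce to the single topmost point, so $\mathrm{maxi}(F_q,S)$ is convex for all $q$, yet $S$ is not convex.

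So I would prove the statement in the sense needed for Theorem \ref{thm:morseofnonconvex}, where ``$S$ convex'' means that $S$ bounds a convex region. Concretely, with $K:=\mathrm{conv}(S)$, the core claim valid for an arbitrary compact $S$ is
\[
\partial K\subset S .
\]
In the situation of interest, $S$ is the boundary of a star-shaped domain and hence a topological $k$-sphere. In that case $\partial K\subset S$ upgrades to $S=\partial K$.

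\textbf{Step 1: $\partial K\subset S$.} Let $y\in\partial K$. By the supporting hyperplane theorem there is $q\in\mathbb{S}^k$ with $F_q(y)=\max_K F_q=\max_S F_q=:c$.
\begin{itemize}
\item The face $K_q:=K\cap F_q^{-1}(c)$ satisfies $K_q=\mathrm{conv}\big(S\cap F_q^{-1}(c)\big)$. This is the standard fact that a point of $\mathrm{conv}(S)$ on the hyperplane $\{F_q=c\}$ can only be a convex combination of points of $S$ that also lie on it, since $F_q\leq c$ on $S$.
\item Every point of $S\cap F_q^{-1}(c)$ is a global maximizer, hence lies in $\mathrm{maxi}(F_q,S)$, taking the isolating neighbourhood $U=S$ with empty boundary.
\item By hypothesis $\mathrm{maxi}(F_q,S)$ is convex, so it contains $\mathrm{conv}(S\cap F_q^{-1}(c))=K_q\ni y$.
\end{itemize}
Since $\mathrm{maxi}(F_q,S)\subset S$, we get $y\in S$.

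\textbf{Step 2: upgrade to $S=\partial K$ for star-shaped boundaries.} If $S=\partial D$ for a domain $D$ star-shaped about $0$ with $S$ transverse to the radial field, then $S$ is homeomorphic to $\mathbb{S}^k$. Moreover $K$ contains $D\ni 0$ and so has nonempty interior, which makes $\partial K$ homeomorphic to $\mathbb{S}^k$ by radial projection from an interior point. Step 1 gives an injective continuous map $\partial K\hookrightarrow S$ between closed $k$-manifolds. By invariance of domain its image is open and closed in the connected space $S$, so $\partial K=S$. Thus $S$ bounds the convex body $K$, and $D=K$ is convex.

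\textbf{Main obstacle.} The only delicate point is the face identity $K_q=\mathrm{conv}(S\cap\{F_q=c\})$ in Step 1. It needs $S$ compact, so that $\mathrm{conv}(S)$ is closed by Carath\'eodory and the maximum $c$ is attained. I would check this carefully.

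Note that the argument never uses local maximizers at levels below $c$. Only global maximizers enter, consistent with the remark before the lemma that one may restrict to global maxima.
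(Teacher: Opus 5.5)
Your counterexample is valid, and your diagnosis is right. For the unit circle, no point other than $q$ meets the paper's definition of a local maximizer of $F_q$, because every open arc around another point contains larger values. So each $\mathrm{maxi}(F_q,S)=\{q\}$ is convex while $S$ is not.

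The paper's proof fails exactly at its final sentence. It does establish $\mathrm{maxi}(F_q,\mathrm{conv}(S))=\mathrm{maxi}(F_q,S)$ for every $q$. Its $(\subset)$ inclusion is your Step 1, although its justification, that every point of $\mathrm{conv}(S)$ is a convex combination of \emph{two} points of $S$, is false in general; your face identity via Carath\'eodory is the correct argument. But it then asserts that this equality forces $\mathrm{conv}(S)=S$. For the circle both sides equal $\{q\}$ for every $q$. What the equality actually yields is that every exposed face of $\mathrm{conv}(S)$ lies in $S$, i.e.\ $\partial\,\mathrm{conv}(S)\subset S$ when $\mathrm{conv}(S)$ has interior, which is precisely your Step 1.

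The missing ingredient is topological, and your Step 2 supplies it by using that $S$ is a sphere surrounding $0$. You can even avoid invariance of domain. Once $0\in\mathrm{int}\,K$, both $S$ and $\partial K$ meet each ray from $0$ in exactly one point, so $\partial K\subset S$ already forces $\partial K=S$.

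Be aware that your repair covers the use in Theorem \ref{thm:morseofnonconvex} but not the paper's other use. The proof of Lemma \ref{lem:nonconvexdisjointmaxi} needs this lemma literally, for arbitrary compact sets. Its induction step is applied to $L(\mathrm{maxi}(F_q,S))$, a compact subset of a hyperplane that is not a sphere. There only your Step 1 survives, and it cannot suffice. A flat planar annulus is not convex, yet every one of its maximizer sets is a single point. Such an annulus arises as the set of global maximizers of the vertical height on a smooth star-shaped sphere in $\R^3$ whose flat top has a shallow dent in its centre. So that induction would have to be reorganized as well, not just this lemma.
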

\begin{proof}
    Let $\mathrm{conv}(S)\subset\mathbb{R}^{k+1}$ denote the convex hull of $S$, which is itself a compact convex set. It is straightforward to verify that $\mathrm{maxi}(F_q,\mathrm{conv}(S))$ is convex for every $q\in \mathbb{S}^k$, and contains only global maximizers. We claim that  for every $q\in\mathbb{S}^k$ we have the equality of sets $\mathrm{maxi}(F_q,\mathrm{conv}(S))=\mathrm{maxi}(F_q,S)$. Assuming the claim is true, we are done: the equality of sets shows that the convex hull of $S$ is equal to $S$.

    Before we show the inclusions, observe that due to the convexity of $\mathrm{maxi}(F_q,S)$ and linearity of $F_q$, the set $\mathrm{maxi}(F_q,S)$ contains only global maxima of $F$ on $S$.

    $(\subset):$ let $q\in \mathbb{S}^k$. For any point $z$ in the convex hull of $S$, there exist $x,y\in S$ and a real number $t\in[0,1]$ such that $z=(1-t)x+ty$ (operations in $\mathbb{R}^{k+1}$). It follows that $\mathrm{maxi}(F_q,\mathrm{conv}(S))$ is contained in $\mathrm{maxi}(F_q,S)$. Indeed: $F_q$ restricted to $\mathrm{conv}(S)$ must have the same maximal value as $F_q$ restricted to $S$. Moreover, any point in $\mathrm{maxi}(F,\mathrm{conv}(S))$ is a convex combination of points in $\mathrm{maxi}(F,S)$, which was convex by assumption. 
    
    $(\supset):$ the inclusion of sets $\mathrm{maxi}(F,S)\subset \mathrm{maxi}(F,\mathrm{conv}(S))$ is even easier: We know that $F$ attains the same maximum value on $S$ and $\mathrm{conv}(S)$, and since $S\subset \mathrm{conv}(S)$ by definition, the inclusion follows. This proves the claim, and finishes the proof.
\end{proof}

In other words, if $S$ is not convex, there must be a height function whose set of local maximizers is also not convex. 

Given a compact subset $S$, local maxima of the linear maps $F_q$ enjoy the following stability property: If $x\in \mathrm{maxi}(F_q,S)$ be a local maximum, then there is an open neighbourhood $U\subset S$ of $x$ and an open neighbourhood $V\subset \mathbb{S}^k$ of $q$ with the following property. For every $q'\in V$ there exists a local maximum $y\in U$ of $F_{q'}$. This is straightforward to prove: $x$ is a local maximizer, so there exists a neighbourhood $U$ of $x$ in $S$ such that $F_q|_{\partial U}$ takes values less than $F_q(x)$. Since $\partial U$ is compact, there exists $\epsilon>0$ such that $F_q(x)-F_q(y)\geq \epsilon$ for all $y\in \partial U$. The function $\max_U:\mathbb{S}^k\to \R,\:q\mapsto \mathrm{max}_{y\in \overline
U}F_q(y)$ is well-defined and continuous. Hence, for $q'$ close enough to $q$ (depending on $\epsilon)$, it follows that there is a point $y(q')$ satisfying $F_{q'}(y(q'))=\mathrm{max}(q')$, while simultaneously $|\mathrm{max}(q')-\mathrm{max}(q)|<\epsilon/2$, $|F_q(z)-F_{q'}(z)|<\epsilon/2$ for all $z\in \partial U$. In particular, $y(q')$ is a local maximum. In other words:

\begin{proposition}
    Let $q\in \mathbb{S}^k$ be a unit vector, let $S\subset \mathbb{R}^{k+1}$ be a compact subset, let $F_q$ be the continuous projection along $q$ and let $x\in \mathrm{maxi}(F_q,S)$ be a local maximizer. If $U$ is a neighbourhood of $x$ isolating $x$, then there is an open neighbourhood $V\subset \mathbb{S}^k$ containing $q$ such that for each $q'\in V$, $(y(q'),U)$ is an isolating pair. 
\end{proposition}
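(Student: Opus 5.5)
The argument is essentially the one sketched in the paragraph preceding the statement, made precise by a compactness and continuity argument.

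\textbf{Step 1: a uniform gap on the boundary.} Fix the isolating pair $(x,U)$ for $F_q$. Since $S$ is compact and $U\subset S$ is open in $S$, the closure $\overline U$ and the boundary $\partial U$ (taken in $S$) are compact. By definition of isolating pair, $F_q(\partial U)\subset(-\infty,F_q(x))$ and $\sup_U F_q=F_q(x)$. So by compactness there is $\epsilon>0$ with
\[
F_q(y)\le F_q(x)-\epsilon\qquad\text{for all } y\in\partial U .
\]
If $\partial U$ is empty, then $U$ is clopen in $S$. In that case the argument below runs without the boundary condition, since any maximizer of $F_{q'}$ on the compact set $\overline U=U$ works.

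\textbf{Step 2: continuity of the local maximum.} Next I use that $|F_q(z)-F_{q'}(z)|\le |q-q'|\,\|z\|$, so $F_{q'}\to F_q$ uniformly on the bounded set $\overline U$ as $q'\to q$. Hence the function $\mathrm{max}_U(q'):=\max_{\overline U}F_{q'}$ is continuous in $q'$. Choose an open neighbourhood $V\subset\mathbb S^k$ of $q$ such that for all $q'\in V$:
\[
\sup_{z\in\overline U}|F_q(z)-F_{q'}(z)|<\epsilon/3 .
\]
Then $\mathrm{max}_U(q')>F_q(x)-\epsilon/3$. On the other hand, for $y\in\partial U$ we have $F_{q'}(y)<F_q(x)-2\epsilon/3$.

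\textbf{Step 3: building the isolating pair.} For $q'\in V$, let $y(q')\in\overline U$ be a point where $F_{q'}$ attains its maximum over $\overline U$, which exists by compactness. By Step 2 this maximum exceeds every value of $F_{q'}$ on $\partial U$, so $y(q')\notin\partial U$ and hence $y(q')\in U$. Consequently:
\begin{itemize}
\item $\sup_{U}F_{q'}=F_{q'}(y(q'))$;
\item $F_{q'}(\partial U)\subset(-\infty,F_{q'}(y(q')))$.
\end{itemize}
This is exactly the statement that $(y(q'),U)$ is an isolating pair, so $y(q')\in\mathrm{maxi}(F_{q'},S)$.

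\textbf{Main obstacle.} The only delicate point is bookkeeping about boundaries. One needs $\overline U\setminus U\subset\partial U$ in the relative topology of $S$, so that the maximizer over $\overline U$ lands in $U$ as soon as it avoids $\partial U$. This holds because $U$ is open in $S$, so $\overline U=U\sqcup\partial U$. Once that is checked, the rest is uniform continuity of linear functionals on a compact set.
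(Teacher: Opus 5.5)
Your proof is correct and is essentially the paper's argument. Both use a uniform gap $\epsilon$ on the compact set $\partial U$, uniform closeness of $F_{q'}$ to $F_q$ on $\overline U$ (you use $\epsilon/3$ where the paper uses $\epsilon/2$ together with continuity of $q'\mapsto\max_{\overline U}F_{q'}$), and a maximizer $y(q')$ of $F_{q'}$ on $\overline U$ that is forced off $\partial U$ and into $U$. Your explicit handling of the case $\partial U=\emptyset$ and of the decomposition $\overline U=U\sqcup\partial U$ supplies bookkeeping the paper leaves implicit.
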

\begin{lemma}\label{lem:nonconvexdisjointmaxi}
    Let $S\subset \R^{k+1}$ be a compact subset. If $S$ is not convex, then there exists an open non-empty set $q\in \mathbb{S}^k$ such that the set $\mathrm{maxi}(F_q)$ is not connected.
\end{lemma}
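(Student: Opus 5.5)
Assume $S$ is not convex. By the preceding Lemma there is some $q_0\in\mathbb{S}^k$ for which $\mathrm{maxi}(F_{q_0},S)$ is not convex. The plan is in three steps: show this set is already disconnected at $q_0$, then use the isolating-pair Proposition to make disconnectedness survive on an open neighbourhood of $q_0$.

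\textbf{Step 1: reduce to global maxima.} First look at the spectrum $F_{q_0}(\mathrm{maxi}(F_{q_0},S))$. If it has more than one value, then $\mathrm{maxi}(F_{q_0},S)$ is disconnected. The reason is that the spectrum is nowhere dense by Lemma \ref{lem:spectrumnowheredense}, so a continuous image of a connected set cannot contain two values. Otherwise every local maximizer is a global maximizer. Then $\mathrm{maxi}(F_{q_0},S)=S\cap\{F_{q_0}=m\}$, where $m=\max_S F_{q_0}$, and this set is compact. Since it is not convex, pick $x,y$ in it such that the segment $[x,y]$ is not contained in it.

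\textbf{Step 2: show the set is disconnected at $q_0$.} Here I would not argue only in the hyperplane $\{F_{q_0}=m\}$. Instead I would tilt the direction. Take $q_\epsilon=(q_0+\epsilon w)/|q_0+\epsilon w|$, with $w$ chosen in the hyperplane $q_0^\perp$. For small $\epsilon$, the local maximizers of $F_{q_\epsilon}$ near the top face lie close to the maximizers of $\langle w,\cdot\rangle$ on the face $K:=S\cap\{F_{q_0}=m\}$. Since $K$ is not convex, there is a direction $w$ for which $\max_K\langle w,\cdot\rangle$ fails to be attained on a convex set. This comes from applying the previous Lemma again, in one lower dimension, to $K$ inside the hyperplane, which gives an induction on dimension. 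Combined with the spectrum argument of Step 1, this produces a disconnected $\mathrm{maxi}$ for some direction. In the base case $K\subset$ a line, a non-convex compact $K$ misses a point between two of its points. The non-convex $K$ has two components separated by a gap, and two isolating neighbourhoods in $S$ then give two local maximizers at distinct heights after a slight tilt.

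\textbf{Step 3: openness.} Once a direction $q$ has two local maximizers $x_1,x_2$ with disjoint isolating neighbourhoods $U_1,U_2$ and distinct values $F_q(x_1)\neq F_q(x_2)$, the Proposition just before the statement gives an open $V\ni q$. For each $q'\in V$ there exist isolating pairs $(y_1(q'),U_1)$ and $(y_2(q'),U_2)$. By continuity the values $F_{q'}(y_i(q'))$ remain distinct after shrinking $V$. Hence the spectrum has at least two points for all $q'\in V$, and so $\mathrm{maxi}(F_{q'},S)$ is disconnected on the open non-empty set $V$.

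The main obstacle is Step 2: passing from ``the maximizing face is non-convex'' to ``there are two local maximizers at genuinely different heights''. Points of a single flat face at the same height do not by themselves give disconnectedness, because the face could be connected but non-convex. The tilting-and-induction argument is what I would try first. If it stalls, I would instead pick a point $z\in[x,y]\setminus S$ and a supporting direction separating $z$ locally from $S$, which produces a local maximizer inside the non-convex region.
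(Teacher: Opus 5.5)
Your outline follows the paper's route: the preceding Lemma gives a non-convex $\mathrm{maxi}(F_{q_0},S)$, the spectrum reduces you to the top face, you induct on dimension by tilting to $q_\epsilon$, and openness comes from the stability Proposition. It fails at its very first step, and no refinement of Step 2 can repair this, because the preceding Lemma is false and so is the statement itself.

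Take the closed annulus $A=\{x\in\R^2 : 1\le |x|\le 2\}$, which is compact and not convex, and fix $q\in\mathbb{S}^1$. Every $x\in A\setminus\{2q\}$ has points of $A$ arbitrarily close to it with larger $F_q$. Move in direction $q$; if $|x|=2$, or if $|x|=1$ with $\langle x,q\rangle<0$, rotate towards $q$ along that circle instead. A local maximizer $x$ must satisfy $F_q\le F_q(x)$ on an open neighbourhood of $x$ in $A$. Hence $\mathrm{maxi}(F_q,A)=\{2q\}$ for every $q$, which is convex and connected. So no $q_0$ as in your Step 1 exists, and no direction at all has disconnected $\mathrm{maxi}$.

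The paper's proof invokes the same Lemma in its base case and in its inductive step, and fails at the same point. The error in that Lemma's proof is its last inference. Equality $\mathrm{maxi}(F_q,\mathrm{conv}\,S)=\mathrm{maxi}(F_q,S)$ for all $q$ only says that $S$ contains the exposed faces of $\mathrm{conv}\,S$ (here the outer circle), not that $S=\mathrm{conv}\,S$.

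Your Step 2 re-applies the Lemma to the face $K$, and it fails there even when Step 1 succeeds. Consider $\{(x,z)\in\R^2\times\R : |x|\le 2,\ 0\le z\le\min(1,\tfrac12+\tfrac12|x|)\}$, a solid cylinder with a conical crater cut into its top. Its face for $q_0=e_3$ is an annulus, and one checks that every $\mathrm{maxi}(F_q)$ is connected. Your fallback via a point $z\in[x,y]\setminus S$ does not help either, since in $A$ the hole creates no local extremum of any $F_q$.

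So the statement must be changed before it can be proved. For its use in Theorem \ref{thm:morseofnonconvex} one should look at height functions on the hypersurface $\partial D$; there, for instance, the tip of the crater above is a local maximizer of $F_{-e_3}$. That is a different assertion and needs a different argument.

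Three smaller remarks. First, your Step 3 is sound, and more explicit than the paper's one-line appeal to the stability Proposition. Second, even granting the Lemma, Step 2 only asserts the lifting from $K$ to $S$. Knowing that maximizers of $F_{q_\epsilon}$ lie near those of $\langle w,\cdot\rangle$ on $K$ does not give separate isolating neighbourhoods in $S$. You need a compactness argument controlling points of $S\setminus K$ close to $K$, which is the role of the paper's nested superlevel sets $F_{\epsilon_n}^{-1}[b(\epsilon_n),\infty)\cap S$. Third, your inference ``two values in the spectrum $\Rightarrow$ $\mathrm{maxi}$ disconnected'' in Steps 1 and 3 is correct, but should not rest on Lemma \ref{lem:spectrumnowheredense}. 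That lemma fails for general compact sets, e.g.\ the graph of the Weierstrass function, whose local maximum values are dense in its range. Argue directly instead: if $C\subset\mathrm{maxi}(F,S)$ is connected, each set $\{x\in C: F(x)\le c\}$ is open and closed in $C$, so $F$ is constant on $C$.
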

\begin{proof}
    The claimed openness follows from the above proposition. So it remains to prove that the set is non-empty.

    We work by induction on $k\in \mathbb{N}$. For $k=1$, let $S\subset \R^{2}$ be a compact subset which is not convex. By the previous Lemma, there exists $q\in\mathbb{S}^1$ such that $\mathrm{maxi}(F_q,S)$ is not convex. If $\mathrm{maxi}(F_q,S)$ contains anything other than global maxima, it follows that $\mathrm{maxi}(F_q,S)$ must be disconnected, and we are done (see Lemma \ref{lem:spectrumnowheredense} and the subsequent remark). So let us assume $\mathrm{maxi}(F_q,S)$ contains only global maximizers. Let $a\in \R$ be the maximal value of $F_q$ on $S$. Let $L:F^{-1}(a)\to \R$ be any affine isomorphism. Embed $\mathrm{maxi}(F_q,S)$ via $L$ into $\R$. The image is a non-convex compact subset of $\R$, which must therefore be disconnected. This shows that $\mathrm{maxi}(F_q,S)$ is itself disconnected. This finishes the base step.

    Assume now that the Lemma holds for $k\in \mathbb{N}$. Let $S\subset \mathbb{R}^{k+2}$ be a compact subset that is not convex. By the previous Lemma, there exists $q\in \mathbb{S}^{k+1}$ for which $\mathrm{maxi}(F_q,S)$ is not convex. By the same argument as in the base step, assume without loss of generality that $\mathrm{maxi}(F_q,S)$ contains only global maximizers. Let $a\in \R$ be the maximal value of $F_q$ on $S$. Let $L:F^{-1}(a)\to \mathbb{R}^{k+1}$ be any affine isomorphism. Embed $\mathrm{maxi}(F_q,S)$ via $L$ into $\R^{k+1}$. The image is a non-convex compact subset. By the induction hypothesis, there exists $q''\in \mathbb{S}^k$ such that 
    $$\mathrm{maxi}\Big(F_{q''},L(\mathrm{maxi}(F_q,S))\Big)\subset \mathbb{R}^{k+1}$$
    is a disconnected subset. Let $V_1,V_2\subset \mathbb{R}^{k+2}$ be two disjoint non-empty closed subsets whose union is $L^{-1}\mathrm{maxi}(F_{q''},L(\mathrm{maxi}(F_q,S)))$, which must then exist. Denote be $q'\in \mathbb{S}^{k+1}$ the preimage of $q''$ under $L$.

    For $\epsilon>0$ define the unit vector $q_{\epsilon}=(q+\epsilon q')/\|q+\epsilon q'\|$, and let $F_{\epsilon}:=F_{q_{\epsilon}}$ be the associated projection. For $\epsilon >0$, let $b(\epsilon)\in \R$ be the value such that $V_1\cup V_2\subset F^{-1}(b(\epsilon))$. Let $\epsilon_n> 0$ be a monotone sequence converging to zero. The monotonicity of the sequence implies that $F_{\epsilon_n}^{-1}[b(\epsilon_n),\infty]\cap S$ is a decreasing nested sequence of sets, i.e. 
    $$F_{\epsilon_n}^{-1}[b(\epsilon_n),\infty)\,\cap\ S\ \ \subset\ \ F_{\epsilon_{n'}}^{-1}[b(\epsilon_{n'}),\infty)\,\cap\ S\quad \forall n'\leq n.$$
    We argue by contradiction that there is an $n\in \N$ for which $F_{\epsilon_n}^{-1}[b(\epsilon_n),\infty)\cap S$ is disconnected. If all of them are connected, the countable intersection property for nested compact sets says that their countable intersection is also connected. However, this is equal to $V_1\cup V_2$, which is disconnected. 

    So, let $n$ be our distinguished integer for which $F_{\epsilon_n}[b(\epsilon_n),\infty)\cap S$ is disconnected. Let $W_1,W_2$ be two closed non-empty disjoint sets whose union is this preimage. Taking maxima over the $W_i$, it is now easy to show that $F_{\epsilon_n}$ has at least two local maxima, one in $V_1$ and one in $V_2$. This finishes the induction step, and hence the proof.
    \end{proof}

    \begin{proof}[Proof of Theorem \ref{thm:morseofnonconvex}]
    If we can prove that there is a Morse function with at least three critical points3, then it follows that there are at least three distinct values: any Morse function must have a minimum and a maximum as critical values, by compactness of $S$. If the third critical value is also a global minimum or maximum and there are no other critical values, then the Morse homology of this Morse function would not be that of a sphere, which would be a contradiction.

    Lemma \ref{lem:nonconvexdisjointmaxi} states that there is an open, non-empty set $U\subset \mathbb{S}^{k}$ for which local maximizers of $f_q$ form a disconnected set. By Proposition \ref{prop:densemorseheight} there is an open dense (hence non-empty) subset $U'$ of $U$ such that for all $q\in U'$, $f_q$ is additionally Morse. Using Morse charts it is straightforward to show that local maximizers of $f_q$ in $S$ are necessarily critical values $f_q$ with maximal index. In conclusion, these $f_q$ are Morse and have at least two global maxima, and of course a local minimum. This finishes the proof.
\end{proof}

\subsection{Homotopic short chords with distinct action}
In this final section we prove Theorem \ref{thm:nonconvexunequalchord} using Theorem \ref{thm:morseofnonconvex}, or rather Corollary \ref{cor:morseofnonconvex} from the previous section. To set notation and explain the set-up, consider a Riemannian metric $g$ on a manifold $M$ (where $(M,g)$ is not necessarily closed or complete), and $x\in M$ is a point, then the following property holds:  There exists a neighbourhood $U\subset M$ of $x$ in which every two points are uniquely connected by a $g$-geodesic. These are so-called geodesically convex neighbourhoods. From the Hamiltonian perspective, this means that any two fibers of $T^*M$ over $U$ are uniquely connected by a Hamiltonian chord in an oriented sense: from $T^*_yM$ to $T^*_zM$ ($y,z\in U$) there is a unique Hamiltonian chord of the flow associated to $H=\frac{1}{2}g_*$. 

The Hamiltonian flow of $H_S$, the two-homogeneous Hamiltonian associated with $S$ can be `linearized' on $T^*U$ to give an approximation of the genuine flow. The linearized flow will be relatively easy to describe by the `shape' of the fiber $S_x$.  More precisely, for a point $(x,p)\in S_x$, and a flow line $\gamma(t)=(x(t),p(t))$, the Hamiltonian equations read as $\dot{x}(t)=\nabla_pH(x,p)$ and $\dot{p}=-\nabla_xH(x,p)$ where the gradients are taken with respect to any metric on $U$. We linearize the flow by defining $\tilde{H}:U\to \R$, $\tilde{H}(y,p)=H(x,p)$ for all $y\in U$. In other words, the $p$-coordinate is constant, and the $x$-coordinate evolves in a straight line with the direction being the direction of the fiber-gradient of $H_S$ at $x$.

\begin{theorem}\label{thm:nonconvexunequalchord}
    Let $S\subset T^*M$ be a fiber-wise star-shaped contact-type hypersurface, and $x\in M$ a point such that $S_x=T^*_xM\cap S$ does not bound a convex set. Then, for every neighbourhood $U\subset M$ of $x$, there exists $y\in U$, $y\neq x$, and Reeb chords $\gamma_i:[0,T_i]\to S$ for $i=0,1$, such that:
    \begin{enumerate}
        \item $\pi\circ\gamma_i(0)=x$ and $\pi\circ\gamma_i(T_i)=y$, $i=0,1$,
        \item the terminal times $T_i$ are not equal.
        \item the curves $\pi\circ\gamma_i$ are homotopic relative end-points.  
    \end{enumerate}
\end{theorem}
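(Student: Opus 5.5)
We first prove the statement for the linearized flow and then transfer it to the genuine flow by a rescaling limit. Work in a bundle chart $T^*U\cong U\times\R^n$ with $x=0$, and let $H=H_S$ be the $2$-homogeneous Hamiltonian generated by $S$.

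For the linearized Hamiltonian $\tilde H(y,p)=H(0,p)$, trajectories are straight lines $t\mapsto(t\nabla_pH(0,p),p)$ with $p$ constant. Since $S_x$ does not bound a convex set, Corollary \ref{cor:morseofnonconvex} gives $p_0\neq p_1\in S_x$ with $\nabla_pH(0,p_0)=c\,\nabla_pH(0,p_1)$ for some $c>0$, $c\neq1$. Both linearized Reeb chords then travel along the same ray $\R_{>0}v$, where $v=\nabla_pH(0,p_1)$, but with speeds differing by the factor $c$. To reach the point $y=\delta v$ they need times $\delta/c$ and $\delta$, which are unequal. The projections are the same straight segment, so they are homotopic rel endpoints inside any convex coordinate ball.

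Next we pass to the genuine flow by blowing up at $x$. Rescale the base by $y\mapsto y/\delta$. The Reeb flow on $S$ restricted to time intervals of length $O(\delta)$, in rescaled base coordinates and with time rescaled by $\delta$, converges in $C^1$ to the linearized flow as $\delta\to0$, since $\nabla_pH(\delta y,p)\to\nabla_pH(0,p)$ and $\dot p=-\nabla_yH=O(1)$, so $p$ moves by only $O(\delta)$. We want to produce, for each small $\delta$, genuine chords from $S_x$ to a common $S_y$ near $\delta v$ with times close to $\delta$ and $\delta/c$. This requires stability of the linearized intersection points. We use the Morse condition from Theorem \ref{thm:morseofnonconvex}: $q$ was chosen so that $f_q$ is Morse, so $p_0,p_1$ are regular points of the Gauss map. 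Hence the maps $p\mapsto\nabla_pH(0,p)/|\nabla_pH(0,p)|$ are local diffeomorphisms near $p_0$ and $p_1$, so $v$ is a transverse value of both. By the inverse function theorem applied to the rescaled endpoint maps $\Phi^\delta_i:(p,t)\mapsto \delta^{-1}\pi\phi^{\delta t}(0,p)$, defined on neighborhoods of $(p_i,1/c)$ and $(p_1,1)$ respectively, the $C^1$-convergence shows that for small $\delta$ both maps hit the point $v$ exactly. The corresponding times are $\delta T_0'$ and $\delta T_1'$ with $T_0'\to1/c$ and $T_1'\to1$, so they are unequal for small $\delta$.

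Finally, both projected chords lie in a ball of radius $O(\delta)$ around $x$, which is contained in $U$ and in a geodesically convex neighborhood for small $\delta$. They are therefore homotopic rel endpoints, and $y=\pi\phi^{\delta T_1'}(0,p)\neq x$. The main obstacle is the transversality step: the endpoint map has domain of dimension $(n-1)+1=n$ and target $\R^n$, and we need its differential to be invertible at the linearized solutions. That invertibility is exactly the regularity of the Gauss map at $p_i$ combined with the nonvanishing radial component of the velocity. We expect to verify it by splitting $\R^n$ into the direction of $v$ and its orthogonal complement, which is where the Morse property from Proposition \ref{prop:gaussandheight} enters.
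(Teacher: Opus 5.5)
Your proposal is correct and follows essentially the same route as the paper. You linearize at $x$, use Corollary \ref{cor:morseofnonconvex} together with the Morse property from Theorem \ref{thm:morseofnonconvex} and Proposition \ref{prop:gaussandheight} to get positively parallel fiber gradients of unequal length and a transverse intersection, and then pass to the genuine flow by blowing up at $x$, which is what Lemmas \ref{lem:uniformlyscaled} and \ref{lem:transverseselfintersect} do in the paper. Your endpoint map $(p,s)\mapsto\delta^{-1}\pi\phi^{\delta s}(0,p)$ on $S_x\times\R_{>0}$ is the free-time Reeb version of the paper's time-one Hamiltonian map on $T^*_xM$ (via the rescaling \eqref{E:rescaling in fibers}), and your splitting into $v$ and $v^\perp$ components, combined with the implicit function theorem in $(p,s,\delta)$, makes explicit the invertibility and persistence that the paper only sketches.
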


For $c\in \R$, let $m_c:\R^n\to \R^n$ be the vector space multiplication by $c$. 
\begin{lemma}\label{lem:uniformlyscaled}
    Let $H:\R^{2n}\cong T^*\R^n\to \R$ be a smooth Hamiltonian which does not depend on the base, i.e. $H(q,p)=H(q',p)$ for all $q,q'\in \R^n$. Let $G:T^*\R^n\to R$ be any other smooth Hamiltonian such that $G|T^*_0M=H|T^*_0M$ (i.e. the maps agree on the cotangent space at zero). 
    
    Let $G_c:=G\circ m_c$, i.e. $G_c(q,p)=G(m_cq,p)$. As $c\to 0$, $\phi^1_{G_c}\to \phi^1_H$ uniformly on any compact neighbourhood of $0\in T^*\R^n$.
\end{lemma}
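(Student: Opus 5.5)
The plan is to compare the two Hamiltonian vector fields directly and apply Gronwall's inequality (continuous dependence of ODE solutions on the vector field). Write $X_{G_c}$ for the Hamiltonian vector field of $G_c$. In canonical coordinates,
\[
X_{G_c}(q,p)=\big(\partial_pG(cq,p),\,-c\,(\partial_qG)(cq,p)\big),\qquad X_H(q,p)=\big(\partial_pH(p),\,0\big),
\]
where we used that $H$ is independent of $q$. Since $G(0,p)=H(0,p)=H(p)$ for all $p$, differentiating in $p$ gives $\partial_pG(0,p)=\partial_pH(p)$. So at $c=0$ the formula for $X_{G_c}$ formally reduces to $X_H$. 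The first step is to make this quantitative on a compact set. Fix a compact $K\subset T^*\R^n$, say $K=\overline{B}_\rho\times\overline{B}_\rho$, and a larger compact $K'=\overline{B}_{\rho'}\times\overline{B}_{\rho'}$. On $K'$ and for $|c|\le 1$, the mean value theorem gives
\[
\|X_{G_c}-X_H\|_{C^0(K')}\le |c|\,\rho'\,\|\partial_q\partial_pG\|_{C^0(\overline{B}_{\rho'}\times \overline{B}_{\rho'})}+|c|\,\|\partial_qG\|_{C^0(\overline{B}_{\rho'}\times\overline{B}_{\rho'})}=O(|c|).
\]

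The second step is to confine the flows, which I expect to be the main (mild) obstacle. We must ensure the trajectories starting in $K$ stay in $K'$ for times $t\in[0,1]$, uniformly in small $c$; otherwise the estimate above is useless, and $G$ is not assumed to be complete. The flow of $H$ is explicit: $\phi^t_H(q,p)=(q+t\,\partial_pH(p),\,p)$. Hence its trajectories from $K$ over $t\in[0,1]$ lie in a compact set $K_1$, and we choose $\rho'$ so that $K'$ contains the $1$-neighbourhood of $K_1$. Now let $z(t)=\phi^t_{G_c}(z_0)$ and $w(t)=\phi^t_H(z_0)$ for $z_0\in K$. As long as $z(t)\in K'$, we have
\[
|\dot z-\dot w|\le |X_{G_c}(z)-X_H(z)|+|X_H(z)-X_H(w)|\le \varepsilon(c)+L\,|z-w|,
\]
where $L$ is a Lipschitz constant of $X_H$ on $K'$ and $\varepsilon(c)=O(|c|)$ comes from the first step. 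Gronwall then gives $|z(t)-w(t)|\le \varepsilon(c)\,t\,e^{Lt}$.

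The third step closes the argument with a continuity (bootstrap) argument. For $|c|$ so small that $\varepsilon(c)e^{L}<1$, the trajectory $z$ cannot exit the $1$-neighbourhood of $K_1$ before time $1$, because its distance from $w(t)\in K_1$ stays below $1$. In particular $z$ exists on $[0,1]$, so $\phi^1_{G_c}$ is defined on $K$. The same estimate then yields
\[
\sup_{z_0\in K}\big|\phi^1_{G_c}(z_0)-\phi^1_H(z_0)\big|\le \varepsilon(c)e^{L}\to 0\qquad\text{as } c\to 0,
\]
which is the claimed uniform convergence on compact sets. If $C^1$-convergence is wanted later, one can apply the same argument to the linearized equations, since $DX_{G_c}\to DX_H$ uniformly on $K'$ by the same mean value estimates.
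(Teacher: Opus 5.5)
Your proof is correct and follows the same route as the paper: both rest on $G_c\to G(0,\cdot)=H$ on compact sets together with continuous dependence of the time-$1$ map on the Hamiltonian. The paper's one-line argument cites only uniform ($C^0$) convergence of $G_c$; you instead use the convergence $X_{G_c}\to X_H$ of the vector fields (i.e.\ $C^1$-convergence of $G_c$, including control of the $c\,\partial_qG$ term), which is what the argument actually needs, and you add the confinement/Gronwall bootstrap showing that $\phi^1_{G_c}$ is defined on the compact set for small $c$, a point the paper leaves implicit.
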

\begin{proof}
    On any compact neighbourhood of $0\in T^*\R^n$, $G_c$ will converge uniformly to $G(0,p)=H(q,p)$. Hence, on any compact neighbourhood of zero, $\phi^1_{G_c}\to \phi^1_{G(0,p)}$ uniformly. This finishes the lemma.
\end{proof}

For a set $X$, let $\Delta X$ denote the diagonal in $X\times X$.
\begin{lemma}\label{lem:transverseselfintersect}
    Let $H_S$ be the two-homogeneous Hamiltonian associated to $S$, and let $x\in M$ be a point. Let $\tilde{H}$ be another two-homogeneous Hamiltonian such that in a chart $\phi:U\to \R^n$ around $x\in U$, $\tilde{H}\circ \phi^{-1}_*$ does not depend on the base.

    Let $\phi_H^1:T_{x}^*M\to T^*M$ and $\phi^1_{\tilde{H}}:T_{x}^*M\to T^*M$ denote the restriction of the time-one Hamiltonian flows to the fiber $T^*_xM$. If the product map $$\phi_{\tilde{H}}^1\times \phi_{\tilde{H}}^1|T^*_xM\times T^*_xM\setminus \triangle(T^*_xM)$$ intersects the diagonal $\triangle M$ transversely and non-trivially, then $\phi^1_H\times \phi^1_H$ intersects $\triangle M$ non-trivially.
\end{lemma}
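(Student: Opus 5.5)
The plan is to deduce the intersection statement for $H=H_S$ from the one for $\tilde{H}$ by a blow-up (rescaling) argument around $x$. Two ingredients do the work: Lemma \ref{lem:uniformlyscaled}, and the fact that transverse intersections persist under $C^1$-small perturbations.

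I read the hypothesis as follows. In the chart $\phi$ with $\phi(x)=0$, there are distinct $p\neq p'$ in $T^*_xM$ with
\[
\pi\phi^1_{\tilde H}(p)=\pi\phi^1_{\tilde H}(p'),
\]
and the map $(p,p')\mapsto(\pi\phi^1_{\tilde H}(p),\pi\phi^1_{\tilde H}(p'))$ is transverse to $\triangle M$ at $(p,p')$. The conclusion should be the analogous coincidence for $H$.

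\textbf{Step 1 (base-independent model).} Work in the chart, so everything lives in $T^*\R^n$ with $x=0$. I choose $\tilde H$ to agree with $H$ on the fiber $T^*_0\R^n$; this is the linearized Hamiltonian $\tilde H(y,p)=H(x,p)$ introduced before Theorem \ref{thm:nonconvexunequalchord}. For $c>0$ put $H_c(q,p):=H(cq,p)$. By Lemma \ref{lem:uniformlyscaled}, $\phi^1_{H_c}\to\phi^1_{\tilde H}$ uniformly on compact neighbourhoods of $p,p'$.

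To get transversality I need $C^1$ convergence. Since $H_c\to\tilde H$ in $C^\infty_{\mathrm{loc}}$ (all $q$-derivatives of $H_c$ carry a factor of $c$), smooth dependence on parameters gives $C^1$ convergence of the flows on compact sets. This is routine, and I would only remark on it.

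\textbf{Step 2 (stability of the transverse coincidence).} Consider, on a small compact neighbourhood $K$ of $(p,p')$ disjoint from the diagonal of $T^*_0\R^n$, the map
\[
\Phi_c(a,b)=\big(\pi\phi^1_{H_c}(a),\,\pi\phi^1_{H_c}(b)\big).
\]
It converges in $C^1$ to $\Phi_0$, which meets $\triangle\R^n$ transversely at $(p,p')$. Stability of transverse intersections (equivalently, the implicit function theorem applied to $\pi\phi^1_{H_c}(a)-\pi\phi^1_{H_c}(b)=0$) produces, for all small $c$, a point $(a_c,b_c)\in K$ with $a_c\neq b_c$ and $\Phi_c(a_c,b_c)\in\triangle\R^n$. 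Moreover $(a_c,b_c)\to(p,p')$ as $c\to 0$.

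\textbf{Step 3 (undo the rescaling).} This is where I expect the main obstacle: relating the flow of $H_c$ to that of $H$ without losing the fiber condition, using $2$-homogeneity. The map $\Psi_c(q,p)=(cq,p/c)$ is symplectic, since it preserves $dp\wedge dq$, and it preserves the fiber over $0$. Homogeneity gives
\[
H_c(q,p)=H(cq,p)=c^2H(cq,p/c)=c^2\,H\circ\Psi_c(q,p).
\]
Hence
\[
\phi^t_{H_c}=\Psi_c^{-1}\circ\phi^{c^2t}_H\circ\Psi_c .
\]
Applying $\Psi_c$ to $a_c,b_c$ gives two distinct covectors $a_c/c\neq b_c/c$ in $T^*_0\R^n$. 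Their time-$c^2$ images under $\phi_H$ have the same footprint, namely $c$ times the common footprint from Step 2. This footprint tends to $x$, so it lies in any prescribed neighbourhood $U$.

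Finally, use the fiber rescaling \eqref{E:rescaling in fibers} (again by homogeneity) to convert these time-$c^2$ chords into time-$1$ chords. Under this rescaling the two covectors $a_c/c$ and $b_c/c$ are each multiplied by the same factor $c^2$, so they stay distinct. The two chords still start in $T^*_xM$ and end over the same point, which is the asserted nontrivial intersection of $\phi^1_H\times\phi^1_H$ with $\triangle M$.

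One point needs care. The whole construction takes place inside $T^*U$, so the chords obtained are short and their projections lie in a geodesically convex neighbourhood. This is exactly what is needed later for the homotopy condition in Theorem \ref{thm:nonconvexunequalchord}.
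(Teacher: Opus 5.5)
Your proposal is correct and follows essentially the paper's argument: you zoom in at $x$, invoke Lemma \ref{lem:uniformlyscaled} upgraded to $C^1$ convergence, and obtain the nontrivial coincidence for small $c$ from persistence of the transverse intersection via the implicit function theorem; note that, as in the paper, this tacitly requires $\tilde H$ to agree with $H_S$ on $T^*_xM$. Your Step~3, conjugating by the symplectic map $(q,p)\mapsto(cq,p/c)$ and then rescaling in the fibers, is an explicit version of the paper's zoomed chart $m_{c^{-1}}\circ\phi$ and its brief appeal to scale invariance; it also correctly restricts to $c>0$, which is where $2$-homogeneity applies.
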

\begin{proof}
    Lets work in a chart $\phi:U\to V$ containing $x$, with image $V$ in $\R^n$ and $\phi(x)=0$. For a sequence $c_n\to 0$, $m_{c^{-1}}\circ \phi$ is a chart for increasingly smaller neighbourhoods of $x$ onto $U$.

    By Lemma \ref{lem:uniformlyscaled}, with respect to this chart, the Hamiltonian flows of the rescaled Hamiltonians in $V$ are converging uniformly to the flow of $\tilde{H}$ in a neighbourhood of $(0,0)\in T^*U$. 
    
    Let $M,N$ be smooth manifolds without boundary, and let $K\subset N\times N$ be a submanifold. Let $Z\subset M\times M$  be a compact set not intersecting $\triangle M$. It is well known that the set $\{f\in C^{\infty}(M,N):f\times f\pitchfork K\mathrm{\:over\:}Z\}$ is an open set in the Whitney $C^1$ topology. 
    
    Let $(x,y)$ be a point off of the diagonal, where $\phi^1_{\tilde{H}}$ has a point of self-intersection. Let $Z$ be any compact neighbourhood of this point in $T_0^*\R^n\times T_0^*\R^n$ not intersecting the diagonal. The Hamiltonian flow $\tilde{H}$ is invariant under scaling, meaning $\phi^{c^{-1}t}_{\tilde{H}}(m_cx)$ is a constant. Transversality of self-intersections is therefore also independent of scaling. By choosing a small enough chart around $x$, we have picked a Hamiltonian flow close enough to the flow of $\tilde{H}$ in the bundle chart over $V$, so that it lies in the open set of transverse self-intersection over $Z$. 

    It remains to show that the set of self-intersections over $Z$ is not empty. The maps $m_c$ where $c$ varies on the interval $(-\epsilon,\epsilon)$ for some $\epsilon>0$ small enough, defines a path of Hamiltonians in $V$ with the Hamiltonian at zero being equal to $\tilde{H}$, and hence a parameterized family of flows converging to $\phi^1_{\tilde{H}}$ in $V$. For this smooth family of flows with domain $T^*_0M\times (-\epsilon,+\epsilon)$, the non-trivial transverse self intersection of $\phi_{\tilde{H}}^1$ at $(x,y)$ implies that for $\epsilon$ small enough, there is a $1$-dimensional family of such non-trivial self-intersections. I.e. the preimage of self-intersections is locally parameterized by $\epsilon$. This finishes the proof.
\end{proof}
\begin{proof}[Proof of the Theorem \ref{thm:nonconvexunequalchord}]
    Let $U$ be any neighbourhood of $x$ in $M$, and without loss of generality we can assume that $U$ is the domain of a chart $\phi:U\xrightarrow{\sim} V\subset \R^n$, which places us in $\R^n$. We will work in the chart induced over $T^*U\cong \R^n\times V$, where $V$ is the image of the chart with $\phi(x)=0$. That is, let $D\subset \R^n$ be the star-shaped domain which $S_x$ bounds, and let $H:\R^n\to \R$ be the two-homogeneous maps associated to it.  Extend $H$ to $\R^n\times V$ to be constant in the fibers, i.e.\ by $H(p,q)=H(p)$ for all $q\in V$.

    Hamilton's equations $\dot{q}=\nabla_p H, \dot{p}=-\nabla_qH$ imply that the flow (for small enough $t$, not leaving the chart) of $\phi_H$ is determined by the gradient of $H:\R^n\to \R$. Corollary \ref{cor:morseofnonconvex} implies that there are distinct small times $t_1,t_2>0$ and a direction $q\in \mathbb{S}^{n-1}$ and two distinct $p_i\in S_x$ such that $\pi\circ \phi^{t_1}(p_1)=\pi \circ\phi_H^{t_2}(p_2)$. Indeed: the Corollary implies there is a distinghuished $q$ for which the associated height function $f_q$ on $S_x$ two critical values, where the gradient $\nabla H$ are positively parallel but of unequal size. This means that the flow of $H$ will hit points in the direction of $q$ for positive but distinct times.
    
    By rescaling time and the norm of the $p_i$, we can instead assume that $t_1=t_2$, but the $p_i$ lie on different level sets of $H$. Moreover, since $G$ has a regular value at these points, $\pi\circ \phi_H^t$ has regular values at the image. In other words, the self-intersection is transverse.  By Lemma \ref{lem:transverseselfintersect}, this means that the original Hamiltonian flow has a transverse non-trivial self-intersection in $U$. Additionally, the uniform convergence implies that for small enough $U$, the self-intersection of the original flow also happens for points in distinct level sets of $H_S$. This finishes the proof.
\end{proof}

\bibliography{biblio}

\end{document}